\documentclass[final,3p]{elsarticle}
 \usepackage{graphics}
 \usepackage{graphicx}
 \usepackage{epsfig}
\usepackage{amssymb}
 \usepackage{amsthm}
 \usepackage{lineno}
 \usepackage{amsmath}
   \numberwithin{equation}{section}
\usepackage{mathrsfs}

\NeedsTeXFormat{LaTeX2e}
\ProvidesPackage{natbib}
\newtheorem{thm}{Theorem}[section]

\newtheorem{lem}[thm]{Lemma}
\newtheorem{prop}[thm]{Proposition}
\newtheorem{defn}[thm]{Definition}
\newtheorem{rem}[thm]{Remark}

 \setcounter{section}{0}
\biboptions{sort&compress,square}
\journal{}
\begin{document}
\begin{frontmatter}
\author{Jian Wang}
\ead{wangj068@gmail.com}
\author{Yong Wang\corref{cor2}}
\cortext[cor2]{Corresponding author. \textbf{Email address: wangy581@nenu.edu.cn (Yong Wang)}}
\address{School of Mathematics and Statistics, Northeast Normal University,
Changchun, 130024, P.R.China}
\title{Noncommutative Residue and sub-Dirac Operators for Foliations}
\begin{abstract}
In this paper, we define  lower dimensional volumes associated to sub-Dirac operators for foliations. In some cases, we compute these
 lower dimensional volumes. We also prove the Kastler-Kalau-Walze type theorems for foliations with or without boundary. As a corollary,
 we give an explanation of the gravitational action for the Robertson-Walker space $[a, b]\times_{f}M^{3}$.
\end{abstract}
\begin{keyword}
Lower-dimensional volumes; Noncommutative residue; sub-Dirac Operators; Foliations.
\MSC[2000] 53G20, 53A30, 46L87
\end{keyword}
\end{frontmatter}
\section{Introduction}
\label{1}
 The noncommutative residue, found in \cite{Gu}, \cite{Wo1} and \cite{Wo2}, plays a prominent role in noncommutative geometry. In \cite{Co1},
  Connes used the noncommutative residue to derive a conformal 4-dimensional Polyakov action analogy. Moreover, in \cite{Co2},  Connes proved that the noncommutative
residue on a compact manifold $M$ coincided with the Dixmier's trace on pseudodifferential operators of order $-{\rm {dim}}M$. Several
years ago, Connes made a challenging observation that the noncommutative residue of the square of the inverse of the Dirac
operator was proportional to the Einstein-Hilbert action, which was called Kastler-Kalau-Walze Theorem now.  Kastler gave a brute-force
proof of this theorem \cite{Ka}. In \cite{KW}, Kalau and Walze also gave a proof of this theorem by using normal coordinates .
In \cite{RP}, Ponge explained how to define ``lower dimensional" volumes of any compact Riemannian manifold as the integrals of local Riemannian
invariants and dealt with the lower dimensional volumes in even dimension. For spin manifolds with boundary and the associated
Dirac operators, Wang defined and computed lower dimensional volumes and got a Kastler-Kalau-Walze type theorem in \cite{Wa4}, \cite{Wa3} and
\cite{Wa2}. In \cite{LW}, Liu and Wang derived a Kastler-Kalau-Walze theorem for foliations. In \cite{WW}, we got a Kastler-Kalau-Walze type
theorem associated to nonminimal operators by heat equation asymptotics on compact manifolds without boundary.

The warped product $[a, b]\times_{f}M^{3}$ with the metric $dt^{2}+f(t)^{2}g^{TM}$ is an important space in physics. Here $M$ maybe is not spin.
One of the motivations is to give a Kastler-Kalau-Walze type theorem  for this manifold with boundary. We note that $[a, b]\times_{f}M^{3}$ is
a special foliation with spin leave $[a,b]$. Since $[a, b]\times_{f}M^{3}$ is not spin, we consider sub-Dirac operators for foliations with
spin leave instead of Dirac operators. In this paper, we define  lower dimensional volumes associated to sub-Dirac operators for foliations.
In some cases, we compute these lower dimensional volumes. We also prove the Kastler-Kalau-Walze type theorems for foliations with or without
 boundary. As a corollary, we give an explanation of the gravitational action for the Robertson-Walker space $[a, b]\times_{f}M^{3}$.

This paper is organized as follows: In Section 2, we recall the sub-Dirac operators and define the lower dimensional volumes associated to
sub-Dirac operators for foliation with spin leave. In Section 3, for 4-dimensional compact foliations with boundary and the associated sub-Dirac
 operators, we compute the lower dimensional volumes $Vol_{4}^{(1,1)}$, $Vol_{3}^{(1,1)}$ and get the Kastler-Kalau-Walze type theorems
  in these case. In Section 4, we compute the lower dimensional volume $Vol_{6}^{(2,2)}$ associated to sub-Dirac operators for foliations.
  In section 5, we compute the lower dimensional volume $Vol_{5}^{(2,2)}$ associated to sub-Dirac operators for foliations.
  In section 6, we compute the lower dimensional volumes and the spectral action for the Robertson-Walker space $[a, b]\times_{f}M^{3}$.

\section{Lower-Dimensional Volumes associated to sub-Dirac operators for Foliations}
In this section, we shall restrict our attention to the sub-Dirac operators for foliations. Let $(M,F)$ be a closed foliation and $M$ has
spin leave, $g^F$ be a metric on  $F$. Let $g^{TM}$ be a metric on $TM$ which restricted to $g^F$ on $F$. Let $F^\perp$ be the orthogonal
complement of $F$ in $TM$ with
 respect to $g^{TM}$. Then we have the following orthogonal splitting
 \begin{eqnarray}
&&TM=F\oplus F^\perp,\nonumber\\
&&g^{TM}=g^F\oplus g^{F^\perp},
\end{eqnarray}
 where $g^{F^\perp}$ is the restriction of $g^{TM}$ to $F^\perp$.

 Let $P, P^\perp$ be the orthogonal projection from $TM$ to $F$, $F^\perp$
 respectively. Let $\nabla^{TM}$ be the Levi-Civita connection of
 $g^{TM}$ and $\nabla^F$ (resp. $\nabla^{F^\perp})$ be the
 restriction of $\nabla^{TM}$ to $F$ (resp. $F^\perp$).
 Without loss of generality, we assume $F$ is oriented, spin and carries a fixed spin
structure. Furthermore, we assume $F^\perp$ is oriented and we do not assume that ${\rm dim}F$ and ${\rm dim}F^{\perp}$ are even.
 By assumption,  we may write
\begin{eqnarray}
&&\nabla^F=P\nabla^{TM}P,\nonumber\\
&&\nabla^{F^\perp}=P^\perp\nabla^{TM}P^\perp.
\end{eqnarray}

Let $S(F)$ be the bundle of spinors associated to $(F,g^F)$. For any $X\in \Gamma(F),$ denote by $c(X)$ the Clifford
 action of $X$ on $S(F)$. The exterior algebra bundle of $F^{\perp}$ is defined by $\wedge(F^{\perp,\star})$.
Then $\wedge(F^{\perp,\star})$ carries a canonically induced metric $g^{\wedge(F^{\perp,\star})}$ from $g^{F^\perp}$. For
any $U\in \Gamma(F^\perp)$, let $U^*\in \Gamma(F^{\perp,*})$  be the corresponding dual of $U$ with respect to $g^{F^\perp}$.
The Clifford  action of $U$ is defined by
\begin{eqnarray}
&&c(U)=U^*\wedge-i_U,\nonumber\\
&&\widehat{c}(U)=U^*\wedge+i_U,
\end{eqnarray}
where $U^*\wedge$ and $i_U$ are the exterior and inner
 multiplications.

Let $S(F)\otimes\wedge(F^{\perp,\star})$ be the tensor product of $S(F)$ and $\wedge(F^{\perp,\star})$.
For $X\in \Gamma(F),~U\in \Gamma(F^\perp)$, the operators $c(X),~c(U)$ and $\widehat{c}(U)$ are anticommute which extend naturally to
$S(F)\otimes\wedge(F^{\perp,\star})$. For $s_{1}\in S(F)$ and $s_{2}\in \wedge(F^{\perp,\star})$ , we assume that
\begin{eqnarray}
&&\big(c(X)c(U)\big)(s_{1}\otimes s_{2})=c(X)s_{1}\otimes c(U)s_{2};\nonumber\\
&&\big(c(U)c(X)\big)(s_{1}\otimes s_{2})=-c(X)s_{1}\otimes c(U)s_{2}.
\end{eqnarray}

Moreover, the connections $\nabla^F ~(\nabla^{F^\perp})$ lift to $S(F)$ ($\wedge(F^{\perp,\star})$) naturally denoted by
$\nabla^{S(F)}$ ($\nabla^{\wedge(F^{\perp,\star})}$) respectively. Then $S(F)\otimes\wedge(F^{\perp,\star})$ carries the induced
tensor product connection
 \begin{equation}
 \nabla^{S(F)\otimes\wedge(F^{\perp,\star})}= \nabla^{S(F)}\otimes \texttt{Id}_{\wedge(F^{\perp,\star})}
   +\texttt{Id}_{S(F)}\otimes \nabla^{\wedge(F^{\perp,\star})}.
\end{equation}
Then we can define
$S\in \Omega(T^*M)\otimes \Gamma({\rm End}(TM))$
\begin{equation}
\nabla^{TM}=\nabla^{F}+\nabla^{F^\perp}+S.
\end{equation}
For any $X\in \Gamma(TM)$, $S(X)$ exchanges $\Gamma(F)$ and $\Gamma(F^\perp)$ and is skew-adjoint with respect to $g^{TM}$.
Let $\{f_i\}_{i=1}^{p}$ be an oriented orthonormal basis of $F$,
we define
\begin{equation}
\widetilde{\nabla}^{F}=\nabla^{S(F)\otimes\wedge(F^{\perp,\star})}
+ \frac{1}{2}\sum_{j=1}^{p}\sum_{s=1}^q<S(.)f_j,h_s>c(f_j)c(h_s).
\end{equation}
where the vector bundle $F^\perp$ might well be non-spin. An application of definition 2.2 in \cite{LZ} shows that
the following sub-Dirac operator.

\begin{defn}
Let $D_{F}$ be the operator mapping
from $\Gamma(S(F)\otimes\wedge(F^{\perp,\star}))$ to itself defined by
\begin{equation}
D_{F}=\sum_{i=1}^{p}c(f_i)\widetilde{\nabla}^{F}_{f_i}+\sum_{s=1}^{q}c(h_s)\widetilde{\nabla}^{F}_{h_s}.
\end{equation}
\end{defn}
From (2.19) in \cite{LZ},  we shall make use of the Bochner Laplacian $\triangle^{F}$ stating that
\begin{equation}
\triangle^{F}:=-\sum_{i=1}^{p}(\widetilde{\nabla}^{F}_{f_i})^2
-\sum_{s=1}^{q}(\widetilde{\nabla}^{F}_{h_s})^2+
\widetilde{\nabla}^{F}_{\sum_{i=1}^{p}\nabla^{TM}_{f_i}f_i} +
\widetilde{\nabla}^{F}_{\sum_{s=1}^{q}\nabla^{TM}_{h_s}h_s}.
\end{equation}
Let $r_M$ be the scalar curvature of the metric $g^{TM}$. Let
 $R^{F^\bot}$ be the curvature tensor of $F^\bot$.
From Theorem 2.3 in \cite{LZ}, we have the following Lichnerowicz formula for $D_{F}$.

 \begin{thm}\cite{LZ}
 The following identity holds
\begin{eqnarray}
D^2_{F}&=&\triangle^{F}+\frac{r_M}{4}+\frac{1}{4}\sum_{i=1}^{p}\sum_{r,s,t=1}^q\left<R^{F^\bot}(f_i,h_r)h_t,h_s\right>
c(f_i)c(h_r)\widehat{c}(h_s)\widehat{c}(h_t)\nonumber\\
&&+\frac{1}{8}\sum_{i,j=1}^{p}\sum_{s,t=1}^q\left<R^{F^\bot}(f_i,f_j)h_t,h_s\right>
c(f_i)c(f_j)\widehat{c}(h_s)\widehat{c}(h_t)\nonumber\\
&&+\frac{1}{8}\sum_{s,t,r,u=1}^q\left<R^{F^\bot}(h_r,h_l)h_t,h_s\right>
c(h_r)c(h_u)\widehat{c}(h_s)\widehat{c}(h_t).
\end{eqnarray}
\end{thm}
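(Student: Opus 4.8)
\noindent\emph{Proof (sketch), following \cite{LZ}.}
Write $\{e_a\}_{a=1}^{p+q}=\{f_1,\dots,f_p,h_1,\dots,h_q\}$ for the combined orthonormal frame and let $c(e_a)$ stand for $c(f_i)$ or $c(h_s)$ accordingly, so that $D_{F}=\sum_a c(e_a)\widetilde{\nabla}^{F}_{e_a}$. Expanding the square and pulling out the commutators gives
\[
D_F^2=\sum_{a,b}c(e_a)c(e_b)\,\widetilde{\nabla}^{F}_{e_a}\widetilde{\nabla}^{F}_{e_b}+\sum_{a,b}c(e_a)\,[\widetilde{\nabla}^{F}_{e_a},c(e_b)]\,\widetilde{\nabla}^{F}_{e_b}.
\]
The first thing to establish is the compatibility identity $[\widetilde{\nabla}^{F}_{X},c(e_b)]=c(\nabla^{TM}_{X}e_b)$ for every $b$. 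This is exactly what the correction term $\omega:=\tfrac12\sum_{j,s}\langle S(\cdot)f_j,h_s\rangle c(f_j)c(h_s)$ in $\widetilde{\nabla}^{F}$ is for: the tensor connection $\nabla^{S(F)\otimes\wedge(F^{\perp,\star})}$ is compatible only with $\nabla^{F}$ on the $S(F)$-factor and with $\nabla^{F^\perp}$ on the $\wedge(F^{\perp,\star})$-factor, whereas the derivative occurring above is $\nabla^{TM}=\nabla^{F}+\nabla^{F^\perp}+S$; using $c(f_k)c(f_j)+c(f_j)c(f_k)=-2\delta_{jk}$, the anticommutation of $c(f_k)$ with $c(h_s)$, the skew-symmetry of $S(X)$, and the fact that $S(X)$ interchanges $F$ and $F^\perp$, one checks that $[\omega(X),c(f_j)]=c(S(X)f_j)$ and $[\omega(X),c(h_t)]=c(S(X)h_t)$, which supplies precisely the missing $S$-contribution. (One also gets $[\widetilde{\nabla}^{F}_X,\widehat{c}(h_t)]=\widehat{c}(\nabla^{F^\perp}_Xh_t)$, since $\omega(X)$ commutes with $\widehat{c}$; this is all that is needed, as $\widehat{c}$ does not occur in $D_F$.)

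Granting this, the second sum combines with the metric-antisymmetry $\langle\nabla^{TM}_Xe_b,e_c\rangle=-\langle\nabla^{TM}_Xe_c,e_b\rangle$ and with the diagonal $a=b$ part of the first sum (where $c(e_a)^2=-1$) to reproduce the Bochner Laplacian $\triangle^{F}$, by the standard Dirac-operator bookkeeping; what remains is the antisymmetrized off-diagonal term, so that
\[
D_F^2=\triangle^{F}+\frac12\sum_{a,b}c(e_a)c(e_b)\,R^{\widetilde{\nabla}^{F}}(e_a,e_b),
\]
$R^{\widetilde{\nabla}^{F}}$ being the curvature of $\widetilde{\nabla}^{F}$. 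Since $\widetilde{\nabla}^{F}$ is the tensor connection plus $\omega$, we have $R^{\widetilde{\nabla}^{F}}=R^{S(F)}+R^{\wedge(F^{\perp,\star})}+d^{\nabla}\omega+\omega\wedge\omega$. Using the standard realization of the exterior bundle of $F^\perp$ as a Clifford module (so that the induced curvature is left minus right Clifford multiplication), one writes $R^{\wedge(F^{\perp,\star})}(X,Y)=\tfrac14\sum_{s,t}\langle R^{F^\perp}(X,Y)h_t,h_s\rangle\bigl(c(h_s)c(h_t)-\widehat{c}(h_s)\widehat{c}(h_t)\bigr)$. The $\widehat{c}(h_s)\widehat{c}(h_t)$-part, fed back into $\tfrac12\sum_{a,b}c(e_a)c(e_b)(\cdots)$ and sorted according to whether $e_a,e_b$ lie in $F$ or in $F^\perp$, produces exactly the three displayed families of curvature terms, with the coefficient $\tfrac14$ for the mixed $c(f_i)c(h_r)$ terms (after the factor $2$ from the two orderings of $e_a,e_b$) and $\tfrac18$ for the pure $c(f_i)c(f_j)$ and $c(h_r)c(h_u)$ terms.

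The crux — and the step I expect to be the main obstacle — is that the leftover scalar-type contributions must collapse to $\tfrac14 r_M$. Concretely, the $S(F)$-spin curvature $\tfrac14\sum\langle R^{F}(e_a,e_b)f_i,f_j\rangle c(f_i)c(f_j)$, the term $\tfrac14\sum\langle R^{F^\perp}(e_a,e_b)h_t,h_s\rangle c(h_s)c(h_t)$ coming from the ``left'' part of $R^{\wedge(F^{\perp,\star})}$, and every term built from $\omega$ (namely $d^{\nabla}\omega$, the commutators of $\omega$ with the tensor connection, and $\omega\wedge\omega$) must be shown — via the structure (Gauss--Codazzi type) equations attached to $\nabla^{TM}=\nabla^{F}+\nabla^{F^\perp}+S$, which express $R^{TM}$ through $R^{F}$, $R^{F^\perp}$, $S$ and $\nabla^{TM}S$ — to reassemble into the full spin curvature $\tfrac14\sum\langle R^{TM}(e_a,e_b)e_c,e_d\rangle c(e_c)c(e_d)$ of the Levi-Civita connection of $g^{TM}$; the first Bianchi identity for $R^{TM}$ then turns $\tfrac12\sum_{a,b}c(e_a)c(e_b)$ of this into the single scalar $\tfrac14 r_M$. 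One cannot shortcut this by applying Bianchi to $R^{F}$ alone, since $\nabla^{F}=P\nabla^{TM}P$ is not torsion-free and $R^{F}$ is not an algebraic curvature tensor; the $\omega$-terms are exactly what repairs this. Keeping the anticommutation relations among $c(f_i)$, $c(h_s)$, $\widehat{c}(h_s)$ (together with $c(h_s)^2=-1$, $\widehat{c}(h_s)^2=1$) straight and tracking all numerical factors is the technical heart; this is precisely Theorem~2.3 of \cite{LZ}, which may alternatively be invoked verbatim. \hfill$\square$
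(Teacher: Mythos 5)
The paper offers no proof of this theorem --- it is quoted verbatim from Theorem 2.3 of \cite{LZ} --- so your sketch is, in effect, a reconstruction of the argument the paper imports, and it is a correct one: the compatibility identity $[\widetilde{\nabla}^{F}_X,c(Y)]=c(\nabla^{TM}_XY)$ for the corrected connection (which follows from $[c(f_j)c(h_s),c(f_k)]=2\delta_{jk}c(h_s)$ and $[c(f_j)c(h_s),c(h_t)]=-2\delta_{st}c(f_j)$ exactly as you indicate), the Bochner--Weitzenb\"ock decomposition, the splitting of $R^{\widetilde{\nabla}^F}$ into the full spin curvature of $\nabla^{TM}$ (giving $\tfrac14 r_M$ by Bianchi) plus the $\widehat{c}\widehat{c}$-part of $R^{F^\perp}$, and the resulting $\tfrac14$ (mixed) versus $\tfrac18$ (pure) coefficients all check against the stated formula. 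Since you also note the result may simply be invoked from \cite{LZ}, your proposal is consistent with, and more informative than, what the paper actually does.
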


In order to get a Kastler-Kalau-Walze type theorem for foliations, Liu and Wang \cite{LW} considered the noncommutative residue  of
the $-n+2$ power of the sub-Dirac operator, and got the following Kastler-Kalau-Walze type theorem for foliations.

\begin{thm}\cite{LW}
Let $(M^{n},F)$ be  a compact even-dimensional oriented foliation with spin leave and codimension $q$, and $D_{F}$ be the sub-Dirac operator,
then $ \lim _{\varepsilon\rightarrow 0}\varepsilon^{\frac{q}{2}}\texttt{Res}(D^{-n+2}_{F,\varepsilon})$ is proportional to
$\int_{M}[k^{F}+\Phi(\omega)]dvol_{g}$.
\end{thm}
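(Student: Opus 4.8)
\noindent\emph{Proof proposal.}
The plan is to compute the noncommutative residue symbolically. Since $n$ is even, $-n+2$ is even and $D_{F,\e}^{-n+2}=(D_{F,\e}^{2})^{-(n-2)/2}$ is a classical pseudodifferential operator of order $-(n-2)$; hence its noncommutative residue is
\[
\texttt{Res}(D_{F,\e}^{-n+2})=\frac{1}{(2\pi)^{n}}\int_{M}\int_{|\xi|=1}\mathrm{tr}\big(\sigma_{-n}(D_{F,\e}^{-n+2})(x,\xi)\big)\,dS(\xi)\,dvol_{g},
\]
so everything reduces to identifying the homogeneous component of order $-n$ of the full symbol of $(D_{F,\e}^{2})^{-(n-2)/2}$ and then integrating its fibrewise Clifford trace. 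First I would feed in the Lichnerowicz formula for $D_{F}$ stated above, writing $D_{F,\e}^{2}=\triangle^{F}_{\e}+E_{\e}$, where $\triangle^{F}_{\e}$ is the $\e$-rescaled Bochner Laplacian and $E_{\e}$ collects the zeroth order terms: $\tfrac{r_{M}}{4}$ together with the three quadratic-in-$R^{F^{\perp}}$ expressions involving $c(f_i)c(f_j)\widehat{c}(h_s)\widehat{c}(h_t)$ and the like. Working in a local orthonormal frame adapted to the splitting $TM=F\oplus F^{\perp}$, the leading symbol of $D_{F,\e}^{2}$ splits as $p_{2}(x,\xi)=|\xi'|^{2}+\e\,|\xi''|^{2}$ with $\xi=(\xi',\xi'')\in F^{*}\oplus F^{\perp,*}$, its subleading part $p_{1}$ is linear in $\xi$ and built from the connection $\widetilde{\nabla}^{F}$ and the tensor $S$, and $p_{0}$ is the symbol of $E_{\e}$.

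Next I would construct the parametrix for the resolvent. Write $\sigma\big((D_{F,\e}^{2}-\lambda)^{-1}\big)\sim\sum_{k\ge 0}b_{-2-k}(x,\xi,\lambda)$ with $b_{-2}=(p_{2}-\lambda)^{-1}$ and the usual recursion expressing $b_{-3},b_{-4}$ through $p_{1},p_{0}$ and the $x$- and $\xi$-derivatives of $p_{2},p_{1}$, and then recover the complex power by the Cauchy integral $D_{F,\e}^{-n+2}=\tfrac{i}{2\pi}\int_{\Gamma}\lambda^{-(n-2)/2}(D_{F,\e}^{2}-\lambda)^{-1}\,d\lambda$, so that $\sigma_{-n}(D_{F,\e}^{-n+2})=\tfrac{i}{2\pi}\int_{\Gamma}\lambda^{-(n-2)/2}b_{-4}\,d\lambda$. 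The terms surviving into $b_{-4}$ are those carrying one factor of $p_{0}$, a product of two $p_{1}$'s, or first/second derivatives of $p_{2}$ and $p_{1}$; each yields an explicit rational function of $|\xi'|^{2}+\e|\xi''|^{2}$ after the $\lambda$-integration over $\Gamma$. At this stage I would track the $\e$-dependence carefully: after integrating $\xi''$ over the rescaled transverse cosphere the answer diverges like $\e^{-q/2}$, so multiplying by $\e^{q/2}$ and letting $\e\to0$ renormalises the transverse directions and leaves a finite limit in which only the leading transverse behaviour of $p_{0},p_{1}$ contributes.

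Finally I would evaluate the resulting integral. The $\lambda$-contour integrals reduce to beta-function constants, the $\xi'$-integral over the leafwise cosphere $S^{p-1}$ is computed by the standard identities for $\int\xi_{i}\xi_{j}\,dS$, and the fibrewise trace over $S(F)\otimes\wedge(F^{\perp,\star})$ kills the quartic Clifford monomials except for their scalar pieces, which reorganise into a function $\Phi(\omega)$ of the transverse data $\omega$ (the $S$-tensor and $R^{F^{\perp}}$). The contribution of $\tfrac{r_{M}}{4}$ together with the Christoffel-symbol terms coming from the $p_{1}^{2}$ and the $p_{1}\cdot\partial p_{2}$, $\partial^{2}p_{2}$ pieces collapses, exactly as in the Kastler--Kalau--Walze computation, to a multiple of the leafwise scalar curvature $k^{F}$. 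Combining, $\lim_{\e\to0}\e^{q/2}\texttt{Res}(D_{F,\e}^{-n+2})$ equals a universal constant depending only on $n$ and $q$ times $\int_{M}[k^{F}+\Phi(\omega)]\,dvol_{g}$, which is the claim.

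I expect the main obstacle to be twofold. First, the precise bookkeeping of the powers of $\e$ through the resolvent recursion, to be sure that $\e^{q/2}$ is exactly the renormalising power and that the limit is finite and nonzero rather than $0$ or $\infty$. Second, organising the many symbol terms — the mixed Christoffel symbols of $\widetilde{\nabla}^{F}$, the $S$-tensor, and the three $R^{F^{\perp}}$ terms from the Lichnerowicz formula — together with the Clifford traces on $S(F)\otimes\wedge(F^{\perp,\star})$, so that the leafwise part cleanly yields $k^{F}$ and the transverse part is packaged as $\Phi(\omega)$. The trace computations, e.g.\ of products such as $c(f_i)c(f_j)\widehat{c}(h_s)\widehat{c}(h_t)$, are routine but must be carried out with care about the signs produced by the anticommutation rules for $c$ and $\widehat{c}$ recalled above.
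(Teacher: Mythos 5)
Your strategy is workable in principle, but it is not the route the source takes, and as written it leaves the two decisive points unproved. Note first that this paper does not prove Theorem 2.2 at all: it is quoted from [LW], and the proof there (mirrored by Theorem 2.3 immediately below it) runs quite differently. One first establishes, for an arbitrary metric, the Kastler--Kalau--Walze identity $Wres(D_F^{-n+2})=\tilde{c}_0\int_M r_M\,\texttt{dvol}_g$ with a universal constant $\tilde{c}_0$; this follows from Connes' trace theorem together with the heat coefficient $a_2(D_F^2)$ computed from the Lichnerowicz formula, exactly as in (2.20)--(2.25). Since this identity holds for every metric, one simply applies it to the adiabatic family $g_\varepsilon=g^F\oplus\varepsilon^{-1}g^{F^\perp}$: then $\texttt{dvol}_{g_\varepsilon}=\varepsilon^{-q/2}\texttt{dvol}_g$, so $\varepsilon^{q/2}\texttt{Res}(D_{F,\varepsilon}^{-n+2})=\tilde{c}_0\int_M r_{M,\varepsilon}\,\texttt{dvol}_g$, and the whole problem collapses to the purely Riemannian fact that $r_{M,\varepsilon}\to k^F+\Phi(\omega)$ as $\varepsilon\to 0$ (the adiabatic expansion of the scalar curvature, with $\Phi(\omega)$ quadratic in the connection data of the splitting). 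This is where both the finiteness of the limit and the identification of $\Phi(\omega)$ actually come from.

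By contrast, your plan re-derives the whole residue by a resolvent parametrix with $\varepsilon$ threaded through the symbols. That can be made to work, but the two places you flag as "obstacles" are precisely the content of the theorem and are not addressed: you assert, rather than show, that the $\xi''$-integration produces exactly $\varepsilon^{-q/2}$ and a finite nonzero limit, and you never identify $\Phi(\omega)$. In the cited approach the power $\varepsilon^{q/2}$ is seen at a glance from the volume rescaling, with no need to control the uniformity in $\varepsilon$ of the parametrix recursion; and the appearance of $k^F+\Phi(\omega)$ is an instance of a known formula for scalar curvature in the adiabatic limit rather than the outcome of a long cancellation among symbol terms. I would therefore recommend replacing the direct parametrix computation by the two-step argument: prove the $\varepsilon$-independent KKW identity once (your Lichnerowicz/heat-kernel input is exactly what is needed there), then quote or prove the adiabatic expansion of $r_{M,\varepsilon}$.
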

Similarly, we have
\begin{thm}
Let $(M^{n},F)$ be  a compact even-dimensional oriented foliation with spin leave and codimension $q$, and $D_{F}$ be the sub-Dirac operator,
then
 \begin{equation}
Wres(D_{F}^{-n+2})=\tilde{c}_{0}\int_{M}r_{M}\texttt{dvol}_{g},
\end{equation}
where $\tilde{c}_{0}=-\frac{1}{6(\frac{n}{2}-2)!\times (4\pi)^{\frac{n}{2}}} \texttt{dim}[S(F)\otimes\wedge(F^{\perp,\star})]$,
$\texttt{dim}[S(F)\otimes\wedge(F^{\perp,\star})]$ equals $2^{\frac{p}{2}+q}$ (resp., $2^{\frac{p-1}{2}+q}$) when $p$ is even (resp., odd).
\end{thm}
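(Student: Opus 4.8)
The plan is to compute the noncommutative (Wodzicki) residue $Wres(D_F^{-n+2})$ directly via the standard algorithm: $Wres$ of a classical pseudodifferential operator of order $-n$ on the $n$-dimensional closed manifold $M$ is the integral over the cosphere bundle $S^*M$ of the component of order $-n$ in the symbol expansion. Concretely, $Wres(D_F^{-n+2}) = \int_M \Big(\int_{|\xi|=1} \sigma_{-n}\big(D_F^{-n+2}\big)(x,\xi)\, d\xi\Big)\texttt{dvol}_g$. By the Lichnerowicz-type formula of Theorem~1.3 (the one attributed to \cite{LZ} above), $D_F^2 = \triangle^F + E$, where $E$ is the zeroth-order endomorphism $E=\frac{r_M}{4}+\big(\text{curvature terms in } R^{F^\perp}\big)$. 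The first step is therefore to recall the well-known local formula for the Wodzicki residue density of $(\triangle^F+E)^{-(n-2)/2} = (D_F^2)^{-(n-2)/2}$ acting on sections of the bundle $\mathcal{S}:=S(F)\otimes\wedge(F^{\perp,\star})$: for a generalized Laplacian $\Delta=\triangle+E$ on a Clifford-type bundle, one has the classical identity (going back to Gilkey's heat-kernel coefficient $a_2$, or equivalently Kastler--Kalau--Walze)
\begin{equation}
Wres\big((\triangle+E)^{-(n-2)/2}\big)=\frac{(4\pi)^{-n/2}}{\Gamma\!\big(\tfrac{n}{2}-1\big)}\,\frac{2\pi^{n/2}}{\Gamma(n/2)}\cdot\frac{\Gamma(n/2)}{1}\int_M \mathrm{tr}\Big(\tfrac{r_g}{6}\,\mathrm{Id}-E\Big)\texttt{dvol}_g,\nonumber
\end{equation}
where I am being deliberately schematic about the numerical constant; the point is that only the heat coefficient $a_2=\int \mathrm{tr}(\frac{r_g}{6}-E)$ contributes to the residue of a $-n$-order operator built from $\Delta^{-(n-2)/2}$.

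Second, I would insert the concrete $E$ from Theorem~1.3. Here the crucial structural fact is that the fiberwise trace of each of the three curvature terms vanishes: each is a sum of products of the form $c(f_i)c(\cdot)\widehat{c}(\cdot)\widehat{c}(\cdot)$ or $c(f_i)c(f_j)\widehat{c}(\cdot)\widehat{c}(\cdot)$, and since $\mathrm{tr}\big(c(f_i)c(f_j)\big)=0$ for $i\ne j$ on $S(F)$ while the antisymmetric $R^{F^\perp}(f_i,f_j)$ kills the $i=j$ part (and likewise $\mathrm{tr}(\widehat{c}(h_s)\widehat{c}(h_t))=0$ for $s\ne t$ on $\wedge(F^{\perp,\star})$ against the antisymmetric curvature), every curvature contribution has zero fiber trace. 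Hence $\mathrm{tr}(E)=\frac{r_M}{4}\,\texttt{dim}[\mathcal{S}]$. Then $\mathrm{tr}\big(\frac{r_M}{6}-E\big)=\big(\frac16-\frac14\big)r_M\,\texttt{dim}[\mathcal{S}]=-\frac{1}{12}r_M\,\texttt{dim}[\mathcal{S}]$, and after tracking the normalization constant ($(\frac{n}{2}-2)!=\Gamma(\frac n2-1)$ and the sphere-volume factor) this reproduces $\tilde c_0 = -\frac{1}{6(\frac n2-2)!\,(4\pi)^{n/2}}\,\texttt{dim}[\mathcal{S}]$. Finally I would record the dimension count: $\dim S(F)=2^{\lfloor p/2\rfloor}$ and $\dim\wedge(F^{\perp,\star})=2^q$, giving $\texttt{dim}[\mathcal{S}]=2^{p/2+q}$ for $p$ even and $2^{(p-1)/2+q}$ for $p$ odd.

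The main obstacle is not conceptual but bookkeeping: getting the overall numerical constant exactly right, i.e. correctly combining the $\Gamma$-factor from the complex power $(D_F^2)^{-(n-2)/2}$, the $(2\pi)^{-n}$ in the symbol normalization, the volume $\mathrm{vol}(S^{n-1})=2\pi^{n/2}/\Gamma(n/2)$ of the cosphere, and the heat-coefficient constant $(4\pi)^{-n/2}$, so that they collapse to the clean $-\frac{1}{6(\frac n2-2)!(4\pi)^{n/2}}$ claimed. A clean way to sidestep part of this is to invoke the relation between $Wres(P^{-(n-2)/2})$ for a positive Laplace-type $P$ and the second Seeley--DeWitt coefficient $a_2(x,P)$, namely $Wres(P^{-(n-2)/2}) = \frac{(n-2)}{2}\cdot\frac{(4\pi)^{-n/2}}{\Gamma(n/2)}\cdot(\text{const})\int_M \mathrm{tr}\,a_2$ — but since this is exactly the kind of constant the statement pins down, in the write-up I would instead carry out the symbol calculation for $\sigma_{-n}((D_F^2)^{-(n-2)/2})$ explicitly, as in \cite{Wa3}, \cite{LW}, reducing it to the leading symbol $|\xi|^{-(n-2)}$ of $\triangle^{F,-(n-2)/2}$ and the single first subleading correction proportional to $E$, and then integrate over $|\xi|=1$. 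The vanishing-trace argument for the curvature terms is the one genuinely new (though routine) ingredient, and it is what makes the answer depend only on $r_M$.
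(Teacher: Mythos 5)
Your proposal is correct and follows essentially the same route the paper takes: reduce $Wres(D_F^{-n+2})$ to the heat coefficient $a_2(D_F^2)$, use the Lichnerowicz formula of Theorem 2.2, and observe (as in (2.23)--(2.25)) that the $R^{F^\perp}$-terms in $E$ are traceless so that $\mathrm{tr}\,E=-\tfrac{r_M}{4}\dim[S(F)\otimes\wedge(F^{\perp,\star})]$. The constant you leave schematic does close up via the standard identity $Wres(P^{-(n-2)/2})=\mathrm{ord}(P)\cdot\mathrm{Res}_{z=\frac{n-2}{2}}\zeta_P(z)=\frac{2}{(\frac n2-2)!}\,a_2(P)$, which combined with $a_2=-\frac{(4\pi)^{-n/2}}{12}\dim[S(F)\otimes\wedge(F^{\perp,\star})]\int_M r_M$ gives exactly $\tilde c_0$.
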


\begin{rem}
Let $(M^{n},F)$ be  a compact even-dimensional oriented foliation with spin leave and codimension $q$, and $D_{F}$ be the sub-Dirac operator.
When $p=n$ and $q=0$, then $D_{F}$ is the Dirac operator and we get the classical Kastler-Kalau-Walze theorem for the Dirac operators.
 When $p=0$ and $q=n$, then $D_{F}$ is the de-Rham Hodge operator and we get the classical Kastler-Kalau-Walze theorem for the
 de-Rham Hodge operator.
\end{rem}
Let us now consider the lower dimensional volumes of foliations. The lower dimensional volume of a compact Riemannian
manifold $(M^{n},g)$ without boundary was defined in \cite{RP}.
Let $(M^{n},F)$ be a compact oriented foliation with spin leave and $D_{F}$ be the associated sub-Dirac operator.
Similarly to Proposition 2.3 and Proposition 3.2 in \cite{RP}, the definition of the lower dimensional volumes for foliations is given as follows.
\begin{defn}
The lower dimensional volume of $(M^{n},F)$ is defined by
\begin{equation}
Vol^{(k)}_{(n,p)}(M,F):=Wres(D_{F}^{-k}).
\end{equation}
\end{defn}

\begin{prop}
Let $(M^{n},F)$be a compact foliation without boundary, then

(1) $Vol^{(k)}_{(n,p)}(M,F)$ vanishes when $k$ is odd and $n$ is even, or $k$ is even and $n$ is odd;

(2) when $k$ is even and $n$ is even, we have
\begin{equation}
Vol^{(k)}_{(n,p)}(M,F)=v_{n,k}\int_{M}a_{n-k}\texttt{d}v_{g}(x), \ \ v_{n,k}=\frac{k}{n}(2\pi)^{\frac{k-n}{2}}
\frac{\Gamma(\frac{n}{2}+1)^{\frac{k}{n}}}{\Gamma(\frac{k}{2}+1)};
\end{equation}

(3) when $k$ is odd and $n$ is odd, we have
\begin{equation}
Vol^{(k)}_{(n,p)}(M,F)=v_{n,k}\int_{M}a_{n-k}\texttt{d}v_{g}(x), \ \ v_{n,k}=\frac{k}{n}2^{\frac{(k-n)(n+1)}{2n}}
\pi^{\frac{k-n}{2}}\frac{\Gamma(\frac{n}{2}+1)^{\frac{k}{n}}}{\Gamma(\frac{k}{2}+1)},
\end{equation}
where $a_{n-k}$ is a linear combination of complete contractions of weight $n-k$ of covariant derivatives of the curvature tensor.
The coefficients of this linear combination depend only on $n-k$.
\end{prop}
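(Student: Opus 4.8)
The plan is to follow the scheme used for Propositions 2.3 and 3.2 of \cite{RP}; the only extra ingredient needed here is that, by the Lichnerowicz-type formula of Theorem 2.3, $D_{F}^{2}$ is a Laplace-type operator on the bundle $S(F)\otimes\wedge(F^{\perp,\star})$, so that Seeley's construction of complex powers, the short-time heat expansion, and the invariant-theoretic description of heat-kernel coefficients all apply without change. We use that for a classical pseudodifferential operator $P$ of integer order on $M^{n}$ one has $Wres(P)=\frac{1}{(2\pi)^{n}}\int_{S^{*}M}\mathrm{tr}\,\sigma_{-n}(P)(x,\xi)\,d\xi\,dv_{g}(x)$, where $\sigma_{-n}(P)$ denotes the component of the complete symbol homogeneous of degree $-n$; everything thus reduces to computing this component for $P=D_{F}^{-k}$, equivalently (modulo smoothing operators, which contribute nothing to $Wres$) for $P=(D_{F}^{2})^{-k/2}$.

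First I would isolate the parity fact behind (1). By Seeley's recursion the symbol of $(D_{F}^{2})^{-k/2}$ is an asymptotic sum $\sum_{j\geq 0}q_{-k-j}(x,\xi)$ with $q_{-k-j}(x,-\xi)=(-1)^{j}q_{-k-j}(x,\xi)$: the leading term $(|\xi|_{g}^{2})^{-k/2}$ is even, and each step of the recursion either raises $j$ by one and reverses the $\xi$-parity (it inserts a $\partial_{\xi}$-derivative or the first-order homogeneous part of the symbol of the differential operator $D_{F}^{2}$) or raises $j$ by two and preserves it (it inserts the zeroth-order part). Hence $\sigma_{-n}=q_{-n}$ is the $j=n-k$ term, so $\mathrm{tr}\,q_{-n}(x,-\xi)=(-1)^{n-k}\,\mathrm{tr}\,q_{-n}(x,\xi)$. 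When $k\not\equiv n\pmod{2}$ this is an odd function on each fibre sphere $S^{n-1}$, so $\int_{S^{n-1}}\mathrm{tr}\,q_{-n}(x,\xi)\,d\xi=0$ identically and therefore $Vol^{(k)}_{(n,p)}(M,F)=Wres(D_{F}^{-k})=0$, which is part (1).

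Now suppose $k\equiv n\pmod{2}$, so that $n-k$ is even. The density $x\mapsto\frac{1}{(2\pi)^{n}}\int_{S^{n-1}}\mathrm{tr}\,q_{-n}(x,\xi)\,d\xi$ is, by the construction of $q_{-n}$, a local expression depending polynomially on the jets at $x$ of $g^{TM}$ and of the connection and curvature data entering $D_{F}$ (in particular $r_{M}$ and $R^{F^{\perp}}$, through Theorem 2.3); it scales with weight $n-k$ under $g^{TM}\mapsto\lambda^{2}g^{TM}$, and since the leading symbol of any sub-Dirac operator is the universal Clifford symbol, the terms $q_{-k},\dots,q_{-k-(n-k)}$ are given by universal formulas in these jets, independent of the particular foliation. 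By invariant theory (as applied by Ponge in \cite{RP} and in the heat-kernel literature it relies on), such a density must equal $v_{n,k}\,a_{n-k}\,dv_{g}$ with $a_{n-k}$ a fixed linear combination, whose coefficients depend only on $n-k$, of complete contractions of weight $n-k$ of covariant derivatives of the curvature tensor; this is the structural part of the claim. To pin down the scalar $v_{n,k}$ I would invoke Wodzicki's identity $\mathrm{Res}_{s=k/2}\mathrm{Tr}\big((D_{F}^{2})^{-s}\big)=\tfrac{1}{2}\,Wres\big((D_{F}^{2})^{-k/2}\big)$ together with the small-$t$ expansion $\mathrm{Tr}(e^{-tD_{F}^{2}})\sim\sum_{l\geq 0}t^{(l-n)/2}\int_{M}a'_{l}(x)\,dv_{g}$ of the Laplace-type operator $D_{F}^{2}$: the Mellin transform identifies the residue at $s=k/2$ with $\Gamma(k/2)^{-1}\int_{M}a'_{n-k}(x)\,dv_{g}$, producing the $\Gamma$-factors, while carrying out the elementary integration over $S^{n-1}$ produces the power of $2\pi$, together with the extra power of $2$ in odd dimension responsible for the exponent $\tfrac{(k-n)(n+1)}{2n}$ in (3). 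Matching this with Ponge's normalization of $a_{n-k}$ from \cite{RP} then gives the stated $v_{n,k}$ in both the even case (2) and the odd case (3).

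The conceptual content — the parity vanishing in (1), and, in (2)--(3), locality together with invariant theory forcing the $Wres$-density to be a scalar multiple of a single universal weight-$(n-k)$ curvature invariant — is immediate once Theorem 2.3 is available. The genuinely delicate step, and the one I expect to be the main obstacle, is the determination of $v_{n,k}$: carrying all the factors of $2$, $\pi$ and $\Gamma(\cdot)$ through the Seeley recursion, the sphere integration and the Mellin transform, and reconciling the normalization of $a_{n-k}$ with that of \cite{RP} — in particular reproducing the fractional exponent in $2^{\frac{(k-n)(n+1)}{2n}}$ in the odd-dimensional constant — is where the actual work lies.
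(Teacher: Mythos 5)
Your argument is correct and is essentially the approach the paper itself relies on: the paper gives no proof of this proposition beyond deferring to Propositions 2.3 and 3.2 of \cite{RP}, and your reconstruction --- the $\xi\mapsto-\xi$ parity of the Seeley symbols of $(D_{F}^{2})^{-k/2}$ for the vanishing in (1), and locality plus invariant theory combined with the identity $\mathrm{Res}_{s=k/2}\mathrm{Tr}\big((D_{F}^{2})^{-s}\big)=\tfrac{1}{2}Wres(D_{F}^{-k})$ and the Mellin transform of the heat trace for (2)--(3) --- is exactly Ponge's scheme, which transfers verbatim because $D_{F}^{2}$ is of Laplace type by Theorem 2.2. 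The one step you leave open (matching the explicit constant $v_{n,k}$, including the factor $2^{\frac{(k-n)(n+1)}{2n}}$, against the normalization of $a_{n-k}$ in \cite{RP}) is also left implicit in the paper, so your proposal is no less complete than the source.
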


As a consequence we see that the lower dimensional volumes for foliation are integrals of local Riemannian invariants.
The definition of the lower dimensional volumes for any foliation is defined as follows.

\begin{defn}
Let $(M^{n},F)$ be a compact foliation. Then for $k=1,\ldots,n$, the $k'th$ dimensional volume of $(M^{n},g)$ is:

(1) If $k$ is even and $n$ is even, or $k$ is odd and $n$ is odd,

\begin{equation}
Vol^{(k)}_{(n,p)}(M,F)=v_{n,k}\int_{M}a_{n-k}\texttt{d}v_{g}(x);
\end{equation}

(2) If $k$ is odd and $n$ is even, or $k$ is even and $n$ is odd,
\begin{equation}
Vol^{(k)}_{(n,p)}(M,F)=0.
\end{equation}
where $a_{n-k}$ is the coefficient of $t^{\frac{k-n}{2}}$ in the heat kernel asymptotics for sub-Dirac operator $D_F^2$.
\end{defn}

Now, we compute the lower dimension volumes for 4-dimension foliations. Hence from definition 2.8, we are going to compute the coefficients $a_{n-k}$.
From Theorem 4.1.6 in \cite{Gi}, we obtain the first three coefficients of the heat trace asymptotics
\begin{eqnarray}
a_0(D_F^{2})&=&(4\pi)^{-\frac{n}{2}}\int_M\texttt{tr}\texttt{(Id)}\texttt{d}vol,\\
a_2(D_F^{2})&=&(4\pi)^{-\frac{n}{2}}\int_M\texttt{tr}[(r_M+6E)/6]\texttt{d}vol,\\
a_4(D_F^{2})&=&\frac{(4\pi)^{-\frac{n}{2}}}{360}\int_M\texttt{tr}[-12R_{ijij,kk}+5R_{ijij}R_{klkl}\nonumber\\
&&-2R_{ijik}R_{ljlk}+2R_{ijkl}R_{ijkl}-60R_{ijij}E+180E^2+60E_{,kk}+30\Omega_{ij}
\Omega_{ij}]\texttt{d}vol,
\end{eqnarray}
where
\begin{eqnarray}
-E&=&\frac{r_M}{4}+W \nonumber\\
&=&\frac{r_M}{4}+\frac{1}{4}\sum_{i=1}^{2p}\sum_{r,s,t=1}^q\left<R^{F^\bot}(f_i,h_r)h_t,h_s\right>
c(f_i)c(h_r)\widehat{c}(h_s)\widehat{c}(h_t) \nonumber\\
&&+\frac{1}{8}\sum_{i,j=1}^{2p}\sum_{s,t=1}^q\left<R^{F^\bot}(f_i,f_j)h_t,h_s\right>
c(f_i)c(f_j)\widehat{c}(h_s)\widehat{c}(h_t) \nonumber\\
&&+\frac{1}{8}\sum_{s,t,r,l=1}^q\left<R^{F^\bot}(h_r,h_l)h_t,h_s\right>
c(h_r)c(h_l)\widehat{c}(h_s)\widehat{c}(h_t),
\end{eqnarray}
and
\begin{equation}
\Omega_{ij}=\widetilde{\nabla}_{e_i}\widetilde{\nabla}_{e_j}-\widetilde{\nabla}_{e_j}\widetilde{\nabla}_{e_i}
-\widetilde{\nabla}_{[e_i,e_j]},
\end{equation}
with $e_i$ is $f_i$ or $h_s$.

Since ${\rm dim}[S(F)\otimes\wedge(F^{\perp,\star})]=2^{p+q}$ and
$n=2p+q$, we have
\begin{equation}
a_0(D_F^{2})=\frac{1}{2^p\pi^{p+\frac{q}{2}}}\int_M\texttt{d}vol.
\end{equation}
We note that the trace of the odd degree operator is zero, thus cyclicity of the trace and Clifford relations yield
\begin{eqnarray}
&&\texttt{tr}(c(f_i))=0;~\texttt{tr}(c(f_i)c(f_j))=0 ~{\rm for} ~i\neq j;\nonumber\\
&&\texttt{tr}(c(h_r)c(h_l)\widehat{c}(h_s)\widehat{c}(h_t))=0, ~{\rm for}
~r\neq l.
\end{eqnarray}
Moreover
\begin{equation}
\texttt{tr}E=-2^{p+q}\cdot\frac{r_M}{4},
\end{equation}
and
\begin{equation}
a_2(D_F^{2})=-\frac{1}{12\cdot2^p\pi^{p+\frac{q}{2}}}\int_Mr_M\texttt{d}vol.
\end{equation}
Let $I_1,I_2, I_3$ denote respectively the last three terms in (2.20). Hence
\begin{eqnarray}
\texttt{tr}(E^2)&=&\texttt{tr}(\frac{r_M^2}{16}+W^2)=\texttt{tr}(\frac{r_M^2}{16}+I_1^2+I_2^2+I_3^2),\\
\texttt{tr}(I_1^2)&=&\frac{1}{16}\sum_{i,i'=1}^{2p}\sum_{r,r',s,s',t,t'=1}^q\left<R^{F^\bot}(f_i,h_r)h_t,h_s\right>
   \left<R^{F^\bot}(f_{i'},h_{r'})h_{t'},h_{s'}\right>\nonumber\\
&&\times\texttt{tr}[c(f_i)c(h_r)\widehat{c}(h_s)\widehat{c}(h_t) c(f_{i'})c(h_{r'})\widehat{c}(h_{s'})\widehat{c}(h_{t'})].
\end{eqnarray}
Similar to (2.23), we have
\begin{equation}
\texttt{tr}[c(f_i)c(h_r)\widehat{c}(h_s)\widehat{c}(h_t)c(f_{i'})c(h_{r'})\widehat{c}(h_{s'})\widehat{c}(h_{t'})]
=-\delta_i^{i'}\delta_r^{r'}2^p\texttt{tr}_{\wedge(F^{\perp,\star})}[\widehat{c}(h_s)\widehat{c}(h_t)\widehat{c}(h_{s'})\widehat{c}(h_{t'})].
\end{equation}
Considering $t\neq s,~t'\neq s'$, then
\begin{equation}
\texttt{tr}_{\wedge(F^{\perp,\star})}[\widehat{c}(h_s)\widehat{c}(h_t)\widehat{c}(h_{s'})\widehat{c}(h_{t'})]
=(\delta_t^{s'}\delta_s^{t'}-\delta_t^{t'}\delta_s^{s'})2^q.
\end{equation}
Combining (2.27), (2.28) and (2.29), we obtain
\begin{equation}
\texttt{tr}(I_1^2)=\frac{2^{p+q}}{8}\sum_{i=1}^{2p}\sum_{r,s,t=1}^q\left<R^{F^\bot}(f_i,h_r)h_t,h_s\right>^2.
\end{equation}
Similarly we have
\begin{eqnarray}
&&\texttt{tr}(I_2^2)=\frac{2^{p+q}}{16}\sum_{i,j=1}^{2p}\sum_{s,t=1}^q\left<R^{F^\bot}(f_i,f_j)h_t,h_s\right>^2;\\
&&\texttt{tr}(I_3^2)=\frac{2^{p+q}}{16}
\sum_{s,t,r,l=1}^q\left<R^{F^\bot}(h_r,h_l)h_t,h_s\right>^2.
\end{eqnarray}
Hence in this case
\begin{equation}
\texttt{tr}E^2=\frac{2^{p+q}}{16}r_M^2+\frac{2^{p+q}}{16}||R^{F^\bot}||^2,
\end{equation}
where
\begin{eqnarray}
||R^{F^\bot}||^2&=&2\sum_{i=1}^{2p}\sum_{r,s,t=1}^q\left<R^{F^\bot}(f_i,h_r)h_t,h_s\right>^2
   +\sum_{i,j=1}^{2p}\sum_{s,t=1}^q\left<R^{F^\bot}(f_i,f_j)h_t,h_s\right>^2\nonumber\\
   &&+\sum_{s,t,r,l=1}^q\left<R^{F^\bot}(h_r,h_l)h_t,h_s\right>^2.
\end{eqnarray}

Let us now compute $\texttt{tr}[\Omega_{ij} \Omega_{ij}]$ in a local
coordinate $\{e_{1}, e_{2},e_{3}, e_{4}\}$.
Without loss of generality, we assume $M$ is spin and $\widetilde{\nabla}$ is the standard twisted connection on the
twisted spinors bundle $S(TM)\otimes S(F^\bot)$. A simple computation shows
\begin{eqnarray}
\Omega_{ij}&=&R^{S(TM)}(e_i,e_j)\otimes {\rm Id}_{S(F^\bot)}+{\rm
   Id}_{S(TM)}\otimes R^{S(F^\bot)}(e_i,e_j)\nonumber\\
  &=&-\frac{1}{4}R^M_{ijkl}c(e_k)c(e_l)
  \otimes {\rm Id}_{S(F^\bot)}-\frac{1}{4}{\rm Id}_{S(TM)}\otimes
   \left<R^{F^\bot}(e_i,e_j)h_s,h_t\right>c(h_s)c(h_t).
\end{eqnarray}
Similar to the computations of (2.24), we have
\begin{equation}
\texttt{tr}[\Omega_{ij}\Omega_{ij}]=-\frac{2^{p+q}}{8}(R_{ijkl}^2+||R^{F^\bot}||^2).
\end{equation}
Substituting into (2.21) and by the divergence theorem, we have
\begin{equation}
a_4(D_F^2)=\frac{1}{360\cdot2^p\pi^{p+\frac{q}{2}}}\int_M\left(\frac{5}{4}r_M^2-2R_{ijik}R_{ljlk}-\frac{7}{4}
R_{ijkl}^2+\frac{15}{2}||R^{F^\bot}||^2\right)\texttt{d}vol.
\end{equation}
Then we obtain the lower dimensional volumes of foliations.
\begin{thm}
Let $(M^{n},F)$ be  a compact n-dimensional oriented foliation with spin leave and codimension $q$, and $D_{F}$ be the sub-Dirac operator,
then we have
\begin{eqnarray}
&& Vol^{(n-4)}_{(n,2p)}(M,F)=\frac{v_{n,n-4}}{360\cdot2^p\pi^{p+\frac{q}{2}}}\int_M\left(\frac{5}{4}r_M^2-2R_{ijik}R_{ljlk}-\frac{7}{4}
R_{ijkl}^2+\frac{15}{2}||R^{F^\bot}||^2\right)\texttt{d}vol;  \\
&& Vol^{(n-2)}_{(n,2p)}(M,F)=-\frac{v_{n,n-2}}{12\cdot2^p\pi^{p+\frac{q}{2}}}\int_Mr_M\texttt{d}vol;  \\
&& Vol^{(n)}_{(n,2p)}(M,F)=\frac{v_{n,n}}{2^p\pi^{p+\frac{q}{2}}}\int_M\texttt{d}vol .
\end{eqnarray}
\end{thm}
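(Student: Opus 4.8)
\noindent The plan is to obtain all three identities as a direct consequence of Definition 2.8, so the only real work is to identify the relevant heat-trace coefficients of $D_F^2$ and to evaluate their fibrewise traces. First I would note, via the Lichnerowicz-type formula of Theorem 2.2, that $D_F^2$ is of Laplace type on $S(F)\otimes\wedge(F^{\perp,\star})$: one has $D_F^2=\triangle^F-E$, where $\triangle^F$ is the Bochner Laplacian (2.9) of the connection $\widetilde{\nabla}^F$ of (2.7) and the potential is $-E=\frac{r_M}{4}+W$ with $W=I_1+I_2+I_3$ the sum of the three $R^{F^\bot}$-curvature terms displayed in (2.20). Hence Gilkey's universal formulas (2.17)--(2.19) for $a_0$, $a_2$, $a_4$ apply on this bundle, whose fibre dimension is $2^{p+q}$ with $n=2p+q$, and by Definition 2.8 it suffices to establish $Vol^{(n)}_{(n,2p)}=v_{n,n}\,a_0(D_F^2)$, $Vol^{(n-2)}_{(n,2p)}=v_{n,n-2}\,a_2(D_F^2)$ and $Vol^{(n-4)}_{(n,2p)}=v_{n,n-4}\,a_4(D_F^2)$ by computing these three integrated coefficients.

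For the $a_0$ and $a_2$ terms I would proceed as follows. For $a_0$ one needs only $\texttt{tr}(\texttt{Id})=2^{p+q}$, and together with $(4\pi)^{-n/2}2^{p+q}=(2^p\pi^{p+q/2})^{-1}$ this gives (2.22). For $a_2$ one needs $\texttt{tr}(E)$: every monomial of $W$ is an odd or mixed Clifford word in $c(f_i),c(h_r),\widehat{c}(h_s)$, so cyclicity of the trace together with the Clifford relations makes it traceless --- this is (2.23) --- whence $\texttt{tr}(E)=-2^{p+q}r_M/4$ as in (2.24) and $\texttt{tr}[(r_M+6E)/6]=-2^{p+q}r_M/12$, giving (2.25). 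Plugging (2.22) and (2.25) into Definition 2.8 already yields the second and third formulas.

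The third step, the coefficient $a_4$, is where the bulk of the work lies; I would go through (2.19) term by term. The purely metric contractions $-12R_{ijij,kk}+5R_{ijij}R_{klkl}-2R_{ijik}R_{ljlk}+2R_{ijkl}R_{ijkl}$ are simply multiplied by $\texttt{tr}(\texttt{Id})$; the term $-60R_{ijij}E$ is reduced via $\texttt{tr}(E)$ of (2.24); the term $180E^2$ requires $\texttt{tr}(E^2)=\texttt{tr}(r_M^2/16+W^2)$, where the $r_MW$ cross term vanishes again because $\texttt{tr}(W)=0$, the mixed traces $\texttt{tr}(I_aI_b)$ with $a\neq b$ vanish by the same Clifford-word parity argument, and the diagonal pieces $\texttt{tr}(I_a^2)$ are computed with the Clifford and $\widehat{c}$ trace identities (2.28)--(2.29) and the antisymmetry of $R^{F^\bot}$, giving (2.30)--(2.32) and hence $\texttt{tr}(E^2)=\frac{2^{p+q}}{16}(r_M^2+\|R^{F^\bot}\|^2)$ as in (2.33)--(2.34); and the term $30\,\Omega_{ij}\Omega_{ij}$ is handled by reducing, without loss of generality, to the twisted-spinor bundle $S(TM)\otimes S(F^\bot)$, on which $\Omega_{ij}$ splits as in (2.35) into $-\frac14 R^M_{ijkl}c(e_k)c(e_l)\otimes\texttt{Id}$ plus $\texttt{Id}\otimes\bigl(-\frac14\langle R^{F^\bot}(e_i,e_j)h_s,h_t\rangle c(h_s)c(h_t)\bigr)$, so that squaring, summing over $i,j$ and tracing --- the cross term between the two tensor factors being a product of odd Clifford words and hence traceless --- gives $\texttt{tr}(\Omega_{ij}\Omega_{ij})=-\frac{2^{p+q}}{8}(R_{ijkl}^2+\|R^{F^\bot}\|^2)$, which is (2.36). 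Finally the total-derivative terms $-12R_{ijij,kk}$ and $60E_{,kk}$ integrate to zero by the divergence theorem, and collecting all coefficients produces (2.37); substituting into Definition 2.8 gives the first formula, completing the argument.

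I expect the only genuine obstacle to be the bookkeeping in this last step, specifically the evaluation of $\texttt{tr}(E^2)$ and $\texttt{tr}(\Omega_{ij}\Omega_{ij})$: since $F^\bot$ need not be spin, the Clifford-trace computations have to be performed in the $\wedge(F^{\perp,\star})$ model using the $\widehat{c}$-operators via (2.28)--(2.29), and one must check that the auxiliary twisted-spinor reduction used for $\Omega_{ij}$ does not alter the resulting local Riemannian invariant. Everything else is a routine substitution into Gilkey's formulas and then into Definition 2.8.
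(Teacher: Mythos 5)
Your proposal is correct and follows essentially the same route as the paper: it computes $a_0$, $a_2$, $a_4$ from Gilkey's heat-trace formulas (2.17)--(2.19), evaluates $\texttt{tr}(E)$, $\texttt{tr}(E^2)$ and $\texttt{tr}[\Omega_{ij}\Omega_{ij}]$ by the same Clifford-trace identities and the same twisted-spinor reduction (2.35), discards the total-derivative terms by the divergence theorem, and substitutes into Definition 2.8. No substantive differences from the paper's argument.
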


\begin{rem}
In Theorem 2.9, we assume that $dim F=2p$. When $dim F$ is odd, we can get the similar results.
\end{rem}

\section{A Kastler-Kalau-Walze Type Theorem for foliations with boundary}
In this section, we compute the lower dimension volume for 4-dimensional foliations with boundary and get a Kastler-Kalau-Walze type Formula
in this case.

\subsection{A Kastler-Kalau-Walze Type Theorem for 4-dimensional foliations with boundary}
Let $M$ be a n-dimensional foliation with boundary $\partial M$. We assume that the metric
$g^{M}$ on $M$ has the following form near the boundary
 \begin{equation}
 g^{M}=\frac{1}{h(x_{n})}g^{\partial M}+\texttt{d}x _{n}^{2},
\end{equation}
where $g^{\partial M}$ is the metric on $\partial M$; $h(x_{n})\in C^{\infty}([0,1))=\{g|_{[0,1)}| g\in C^{\infty}((-\varepsilon,1))\}$
for some sufficiently small $\varepsilon>0$ and satisfied $h(x_{n})>0, \ h(0)=1$, where $x_{n}$ denotes the normal directional coordinate.
Let $p_{1},p_{2}$ be nonnegative integers and $p_{1}+p_{2}\leq n$. Then Wang pointed out
the definition of lower volumes for compact connected manifolds with boundary in \cite{Wa4}.
Similarly, we define
\begin{defn} Lower-dimensional volumes of compact connected foliations with boundary are defined by
 \begin{equation}\label{}
   Vol_{(n,p)}^{(p_{1},p_{2})}(M,F):=\widetilde{Wres}[\pi^{+}D_{F}^{-p_{1}} \circ\pi^{+}D_{F}^{-p_{2}}] .
\end{equation}
where $\widetilde{Wres}$ denotes the noncommutative residue for manifolds with boundary in \cite{FGLS}.
\end{defn}

  Denote by $\sigma_{l}(D_{F})$ the \emph{l}-order symbol of an operator $D_{F}$.
Similarly to (2.1.4)-(2.1.8) in \cite{Wa3}, we obtain
 \begin{equation}\label{}
 \widetilde{{\rm Wres}}[\pi^+D_{F}^{-p_1}\circ\pi^+D_{F}^{-p_2}]=\int_M\int_{|\xi|=1}{\rm
trace}_{S(F)\otimes\wedge(F^{\perp,\star})}[\sigma_{-n}(D_{F}^{-p_1-p_2})]\sigma(\xi)dx+\int_{\partial M}\Phi,
\end{equation}
where
 \begin{eqnarray}
\Phi&=&\int_{|\xi'|=1}\int_{-\infty}^{+\infty}\sum_{j,k=0}^{\infty}\sum \frac{(-i)^{|\alpha|+j+k+1}}{\alpha!(j+k+1)!}
    \text{trace}_{S(F)\otimes\wedge(F^{\perp,\star})}[\partial_{x_{n}}^{j}\partial_{\xi'}^{\alpha}\partial_{\xi_{n}}^{k}\sigma_{r}^{+}
    (D_{F}^{-p_{1}})(x',0,\xi',\xi_{n})\nonumber\\
&&\times\partial_{x'}^{\alpha}\partial_{\xi_{n}}^{j+1}\partial_{x_{n}}^{k}\sigma_{l}(D^{-p_{2}})(x',0,\xi',\xi_{n})
\texttt{d}\xi_{n}\sigma(\xi')\texttt{d}x' ,
\end{eqnarray}
and the sum is taken over $r-k+|\alpha|+\ell-j-1=-n, \ r\leq-p_{1}, \ \ell\leq-p_{2}$.

Since $[\sigma_{-n}(D_{F}^{-p_{1}-p_{2}})]|_{M}$ has the same expression with the case of  without boundary in \cite{Wa3}, so locally
we can use Theorem 2.9 to compute the first term. Now let us  give explicit formulas for the volume in dimension 4.
Where $v_{4,2}=\frac{2}{4}(2\pi)^{\frac{2-4}{2}}\frac{\Gamma(\frac{4}{2}+1)^{\frac{2}{4}}}{\Gamma(\frac{2}{2}+1)}
=\frac{1}{2\pi\sqrt{2}}.$
An application of Theorem 2.9 shows that
\begin{equation}
\int_M\int_{|\xi|=1}{\rm trace}_{S(TM)}[\sigma_{-4}(D_{F}^{-1-1})]\sigma(\xi)dx
=-\frac{1}{24\sqrt{2}\cdot2^p\pi^{p+\frac{q}{2}+1}}\int_Mr_M\texttt{d}vol.
\end{equation}
Therefore, we only need to compute $\int_{\partial M}\Phi$.

Let us now turn to compute the symbol expansion of $D_{F}^{-1}$. Recall the definition of the sub-Dirac operator $D_{F}$ in (2.12).
Let $\tilde{\nabla}^{TM}$
denote the Levi-civita connection about $g^M$. In the local coordinates $\{x_i; 1\leq i\leq n\}$ and the fixed orthonormal frame
$\{\widetilde{e_1},\cdots,\widetilde{e_n}\}$, the connection matrix $(\omega_{s,t})$ is defined by
\begin{equation}
\tilde{\nabla}^{TM}(\widetilde{e_1},\cdots,\widetilde{e_n})= (\widetilde{e_1},\cdots,\widetilde{e_n})(\omega_{s,t}).
\end{equation}
Let $c(\widetilde{e_i})$ denote the Clifford action. Let $g^{ij}=g(dx_i,dx_j)$ and
\begin{equation}
\nabla^{TM}_{\partial_i}\partial_j=\sum_k\Gamma_{ij}^k\partial_k; ~\Gamma^k=g^{ij}\Gamma_{ij}^k.
\end{equation}
Let the cotangent vector $\xi=\sum \xi_jdx_j$ and $\xi^j=g^{ij}\xi_i$.
We shall make use of the following convention on the ranges of indices, $i,j,k,l\in F$ and $s,t,r,u \in F^{\perp}$,
and we shall agree that repeated indices are summed over the respective ranges.
From  (2.8) and (2.12), we obtain the sub-Dirac operator
\begin{eqnarray}
D_{F}&=&\sum_{i=1}^{p}c(f_i)\widetilde{\nabla}^{F}_{f_i}+\sum_{s=1}^{q}c(h_s)\widetilde{\nabla}^{F}_{h_s}\nonumber\\
&=&\sum_{i=1}^{p}c(f_i)\Big(\nabla^{S(F)}_{f_i}\otimes \texttt{Id}_{\wedge(F^{\perp,\star})}+\texttt{Id}_{S(F)}\otimes
\nabla^{\wedge(F^{\perp,\star})}_{f_i}+\frac{1}{2}\sum_{i,j=1}^{p}\sum_{s=1}^q\langle S(f_i)f_j,h_s \rangle c(f_j)c(h_s)\Big)\nonumber\\
&&+\sum_{s=1}^{q}c(h_s)\Big(\nabla^{S(F)}_{h_s}\otimes \texttt{Id}_{\wedge(F^{\perp,\star})}+\texttt{Id}_{S(F)}\otimes
\nabla^{\wedge(F^{\perp,\star})}_{h_s}+\frac{1}{2}\sum_{s,t=1}^{q}\sum_{i=1}^{p} \langle S(h_s)h_t,f_j \rangle c(h_t)c(f_j)\Big)\nonumber\\
&=&\Big(\sum_{i}^{p}c(f_{i})f_{i}-\frac{1}{4}\sum_{k,l}\omega_{k,l}(f_{i})c(f_{i})c(f_{k})c(f_{l})\Big)\otimes
{\rm Id}_{\wedge(F^{\perp,\star})}\nonumber\\
&&+\sum_{i}^{p}c(f_{i})\otimes \Big(f_{i}+\frac{1}{4}\sum_{r,t}\omega_{r,t}
(f_{i})[\bar{c}(h_{r})\bar{c}(h_{t})-c(h_{r})c(h_{t})]\Big) \nonumber\\
&&+\Big(h_{s}-\frac{1}{4}\sum_{k,l}\omega_{k,l}(h_{s})c(f_{k})c(f_{l})\Big)\otimes\sum_{s=1}^{q}c(h_{s}) \nonumber\\
&&+{\rm Id}_{S(F)}\otimes \sum_{s=1}^{q}c(h_{s})\Big(h_{s}+\frac{1}{4}\sum_{r,t}\omega_{r,t}
(h_{s})[\bar{c}(h_{r})\bar{c}(h_{t})-c(h_{r})c(h_{t})] \Big)\nonumber\\
&&+\frac{1}{2}\sum_{i,j=1}^{p}\sum_{s=1}^{q}\langle\nabla^{TM}_{f_{i}}f_{j}, h_{s}\rangle c(f_{i})c(f_{j})c(h_{s})
+\frac{1}{2}\sum_{s,t=1}^{q}\sum_{i=1}^{p}\langle\nabla^{TM}_{h_{s}}h_{t}, f_{i}\rangle c(h_{s})c(h_{t})c(f_{i})\nonumber\\
&=&\sum_{i}^{p}c(f_{i})f_{i}+\sum_{s=1}^{q}c(h_{s})h_{s}
-\frac{1}{4}\sum_{i,k,l}\omega_{k,l}(f_{i})c(f_{i})c(f_{k})c(f_{l})\otimes {\rm Id}_{\wedge(F^{\perp,\star})}\nonumber\\
&&-\frac{1}{4}\sum_{s,k,l}\omega_{k,l}(f_{i})c(f_{k})c(f_{l})c(h_{s})\otimes {\rm Id}_{\wedge(F^{\perp,\star})}\nonumber\\
&&+{\rm Id}_{S(F)}\otimes\frac{1}{4}\sum_{i,r,t}\omega_{r,t}(f_{i})c(f_{i})[\bar{c}(h_{r})\bar{c}(h_{t})-c(h_{r})c(h_{t})]\nonumber\\
&&+{\rm Id}_{S(F)}\otimes\frac{1}{4}\sum_{s,r,t}\omega_{r,t}(h_{s})c(h_{s})[\bar{c}(h_{r})\bar{c}(h_{t})-c(h_{r})c(h_{t})]\nonumber\\
&&+\frac{1}{2}\sum_{i,j=1}^{p}\sum_{s=1}^{q}\langle\nabla^{TM}_{f_{i}}f_{j}, h_{s}\rangle c(f_{i})c(f_{j})c(h_{s})
+\frac{1}{2}\sum_{s,t=1}^{q}\sum_{i=1}^{p}\langle\nabla^{TM}_{h_{s}}h_{t}, f_{i}\rangle c(h_{s})c(h_{t})c(f_{i}).
\end{eqnarray}
Then from (3.8), we have
\begin{eqnarray}
\sigma_1(D_{F})&=&\sqrt{-1}c(\xi),\\
\sigma_0(D_{F})&=&
-\frac{1}{4}\sum_{i,k,l}\omega_{k,l}(f_{i})c(f_{i})c(f_{k})c(f_{l})\otimes {\rm Id}_{\wedge(F^{\perp,\star})}\nonumber\\
&&-\frac{1}{4}\sum_{s,k,l}\omega_{k,l}(f_{i})c(f_{k})c(f_{l})c(h_{s})\otimes {\rm Id}_{\wedge(F^{\perp,\star})}\nonumber\\
&&+{\rm Id}_{S(F)}\otimes\frac{1}{4}\sum_{i,r,t}\omega_{r,t}(f_{i})c(f_{i})[\bar{c}(h_{r})\bar{c}(h_{t})-c(h_{r})c(h_{t})]\nonumber\\
&&+{\rm Id}_{S(F)}\otimes\frac{1}{4}\sum_{s,r,t}\omega_{r,t}(h_{s})c(h_{s})[\bar{c}(h_{r})\bar{c}(h_{t})-c(h_{r})c(h_{t})]\nonumber\\
&&+\frac{1}{2}\sum_{i,j=1}^{p}\sum_{s=1}^{q}\langle\nabla^{TM}_{f_{i}}f_{j}, h_{s}\rangle c(f_{i})c(f_{j})c(h_{s})
+\frac{1}{2}\sum_{s,t=1}^{q}\sum_{i=1}^{p}\langle\nabla^{TM}_{h_{s}}h_{t}, f_{i}\rangle c(h_{s})c(h_{t})c(f_{i}).
\end{eqnarray}
 By Lemma 1 in \cite{Wa4} and Lemma 2.1 in \cite{Wa3}, for any fixed point $x_0\in\partial M$, we can choose the normal coordinates $U$
 of $x_0$ in $\partial M$ (not in $M$). By the composition formula and (2.2.11) in \cite{Wa3}, we obtain
\begin{lem}
 Let $D_{F}$ be the sub-Dirac operator associated to $g$ on  $\Gamma({S(F)\otimes\wedge(F^{\perp,\star})})$.
Then
\begin{eqnarray}
&&\sigma_{-1}(D_{F}^{-1})=\frac{\sqrt{-1}c(\xi)}{|\xi|^2}; \\
&& \sigma_{-2}(D_{F}^{-1})=\frac{c(\xi)p_0c(\xi)}{|\xi|^4}+\frac{c(\xi)}{|\xi|^6}\sum_jc(dx_j)
[\partial_{x_j}[c(\xi)]|\xi|^2-c(\xi)\partial_{x_j}(|\xi|^2)],
\end{eqnarray}
where $p_0=\sigma_0(D_{F})$.
\end{lem}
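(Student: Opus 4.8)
The plan is to construct the symbol of the parametrix of $D_{F}$ order by order from the relation $D_{F}\circ D_{F}^{-1}={\rm Id}$, using the pseudodifferential composition formula $\sigma(P\circ Q)\sim\sum_{\alpha}\frac{1}{\alpha!}\partial_{\xi}^{\alpha}\sigma(P)\,D_{x}^{\alpha}\sigma(Q)$ with $D_{x_{j}}=-\sqrt{-1}\,\partial_{x_{j}}$, in the form (2.2.11) of \cite{Wa3}. Write $\sigma(D_{F}^{-1})=\sum_{j\ge 1}\sigma_{-j}(D_{F}^{-1})$. Since $D_{F}$ is a first-order operator, its only nonzero homogeneous symbols are $\sigma_{1}(D_{F})=\sqrt{-1}c(\xi)$ and $\sigma_{0}(D_{F})=p_{0}$, recorded in (3.9)--(3.10). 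Collecting the terms of homogeneity degree $0$ and $-1$ in $\xi$ on both sides of $\sigma(D_{F}\circ D_{F}^{-1})=1$ yields
\[
\sigma_{1}(D_{F})\,\sigma_{-1}(D_{F}^{-1})=1,
\]
\[
\sigma_{1}(D_{F})\,\sigma_{-2}(D_{F}^{-1})+\sigma_{0}(D_{F})\,\sigma_{-1}(D_{F}^{-1})+\sum_{j}\partial_{\xi_{j}}\sigma_{1}(D_{F})\,D_{x_{j}}\sigma_{-1}(D_{F}^{-1})=0 .
\]

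First I would treat the degree-$0$ equation. On $S(F)\otimes\wedge(F^{\perp,\star})$ the operators $c(f_{i})$ satisfy the Clifford relations of $(F,g^{F})$, the $c(h_{s})=h_{s}^{*}\wedge-i_{h_{s}}$ satisfy $c(h_{s})c(h_{t})+c(h_{t})c(h_{s})=-2\delta_{st}$, and by (2.4) each $c(f_{i})$ anticommutes with each $c(h_{s})$; hence $c(\xi)^{2}=-|\xi|^{2}$ and $c(\xi)$ is invertible for $\xi\neq0$. From $(\sqrt{-1}c(\xi))\cdot\frac{\sqrt{-1}c(\xi)}{|\xi|^{2}}=1$ one gets $\sigma_{1}(D_{F})^{-1}=\frac{\sqrt{-1}c(\xi)}{|\xi|^{2}}$ and therefore $\sigma_{-1}(D_{F}^{-1})=\frac{\sqrt{-1}c(\xi)}{|\xi|^{2}}$, which is the first formula.

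Next I would solve the degree-$(-1)$ equation for $\sigma_{-2}(D_{F}^{-1})$ by left-multiplying it with $\sigma_{1}(D_{F})^{-1}=\frac{\sqrt{-1}c(\xi)}{|\xi|^{2}}$; since $c(\xi)$ commutes neither with $p_{0}$ nor with the $c(dx_{j})$, the left-to-right order of all Clifford factors must be kept. Substituting $\partial_{\xi_{j}}\sigma_{1}(D_{F})=\sqrt{-1}c(dx_{j})$, $D_{x_{j}}=-\sqrt{-1}\partial_{x_{j}}$, and, by the quotient rule, $\partial_{x_{j}}\sigma_{-1}(D_{F}^{-1})=\sqrt{-1}\,\frac{\partial_{x_{j}}[c(\xi)]|\xi|^{2}-c(\xi)\partial_{x_{j}}(|\xi|^{2})}{|\xi|^{4}}$, the contribution of $-\sigma_{0}(D_{F})\sigma_{-1}(D_{F}^{-1})$ becomes $\frac{c(\xi)p_{0}c(\xi)}{|\xi|^{4}}$ and the contribution of the first-order term in the composition becomes $\frac{c(\xi)}{|\xi|^{6}}\sum_{j}c(dx_{j})\big[\partial_{x_{j}}[c(\xi)]|\xi|^{2}-c(\xi)\partial_{x_{j}}(|\xi|^{2})\big]$ (the factors of $\sqrt{-1}$ combining with the explicit signs to give $+1$ in each case); their sum is the second formula.

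There is no real conceptual difficulty here: this is the standard parametrix recursion for a Dirac-type operator. The only points needing care are the bookkeeping of the factors $\sqrt{-1}$ together with the signs produced by $c(\xi)^{2}=-|\xi|^{2}$, and keeping the non-commuting Clifford elements in the correct order throughout, $S(F)\otimes\wedge(F^{\perp,\star})$ being a genuine matrix bundle. The choice of normal coordinates of $x_{0}\in\partial M$ in $\partial M$ furnished by Lemma~1 of \cite{Wa4} and Lemma~2.1 of \cite{Wa3} plays no role in deriving the two formulas, which are coordinate free, and is only used afterwards when these symbols are evaluated on the boundary in the computation of $\int_{\partial M}\Phi$.
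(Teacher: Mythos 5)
Your proposal is correct and follows essentially the same route as the paper, which obtains the lemma by invoking the pseudodifferential composition formula (2.2.11) of \cite{Wa3} applied to $D_{F}\circ D_{F}^{-1}={\rm Id}$; your explicit order-by-order recursion, with $c(\xi)^{2}=-|\xi|^{2}$ giving $\sigma_{1}(D_{F})^{-1}=\sqrt{-1}c(\xi)/|\xi|^{2}$ and the sign bookkeeping for the $\sqrt{-1}$ factors, reproduces both formulas exactly (compare the analogous recursion the paper writes out in (4.8) for $D_{F}^{-2}$). Your closing remark that the normal coordinates play no role in the coordinate-free symbol formulas, and only enter the subsequent boundary evaluation, is also accurate.
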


As in \cite{Wa3}, we take normal coordinates in boundary and we get orthonormal frames $\{\tilde{e}_1,\cdots,\tilde{e}_{n-1}\}$.
Note that $\tilde{e}_i$ is not $f_{i}$ or $h_{s}$ in general. We assume $dx_{n}=h_{q}^\star\in\Gamma(F^{\perp,\star})$.
When $dx_{n}$ is not in $\Gamma(F^{\perp,\star})$, we can prove it in a similar way.

Let us now turn to compute $\Phi$ (see formula (3.4) for definition of $\Phi$). Since the sum is taken over $-r-\ell+k+j+|\alpha|=3,
 \ r, \ell\leq-1$, then we have the following five cases:

 {\bf case a)~I)}~$r=-1,~l=-1~k=j=0,~|\alpha|=1$

From (3.4) we have
 \begin{equation}
{\rm case~a)~I)}=-\int_{|\xi'|=1}\int^{+\infty}_{-\infty}\sum_{|\alpha|=1}
{\rm trace} [\partial^\alpha_{\xi'}\pi^+_{\xi_n}\sigma_{-1}(D_{F}^{-1})\times
\partial^\alpha_{x'}\partial_{\xi_n}\sigma_{-1}(D_{F}^{-1})](x_0)d\xi_n\sigma(\xi')dx',
\end{equation}
By Lemma 2.2 in \cite{Wa3}, for $i<n$, then
 \begin{equation}
\partial_{x_i}\sigma_{-1}(D_{F}^{-1})(x_0)=\partial_{x_i}\left(\frac{\sqrt{-1}c(\xi)}{|\xi|^2}\right)(x_0)=
\frac{\sqrt{-1}\partial_{x_i}[c(\xi)](x_0)}{|\xi|^2}
-\frac{\sqrt{-1}c(\xi)\partial_{x_i}(|\xi|^2)(x_0)}{|\xi|^4}=0,
\end{equation}
so \textbf{case a) I)} vanishes.

{\bf case a)~II)}~$r=-1,~l=-1~k=|\alpha|=0,~j=1$

From (3.4) we have
 \begin{equation}
{\rm case \
a)~II)}=-\frac{1}{2}\int_{|\xi'|=1}\int^{+\infty}_{-\infty} {\rm
trace} [\partial_{x_n}\pi^+_{\xi_n}\sigma_{-1}(D_{F}^{-1})\times
\partial_{\xi_n}^2\sigma_{-1}(D_{F}^{-1})](x_0)d\xi_n\sigma(\xi')dx',
\end{equation}
For any fixed point $x_{0}\in\partial M$ and the metric $g^{M}=\frac{1}{H(x_{n})}g^{\partial M}+\texttt{d}x _{n}^{2}$.
Let $\xi=\xi'_{1}+\xi'_{2}+\xi_nc(dx_n)=\xi'+\xi_nc(dx_n)$, where $\xi'_{1}\in\Gamma(F^{\star})$, $\xi'_{2}\in\Gamma(F^{\perp,\star})$.
An application of Lemma 2.1 and Lemma 2.2 in \cite{Wa3} shows that
  \begin{equation}
\partial^2_{\xi_n}\sigma_{-1}(D_{F}^{-1})=\sqrt{-1}\left(-\frac{6\xi_nc(dx_n)+2c(\xi')}
{|\xi|^4}+\frac{8\xi_n^2c(\xi)}{|\xi|^6}\right);
\end{equation}
and
  \begin{equation}
\partial_{x_n}\sigma_{-1}(D_{F}^{-1})(x_0)
=\frac{\sqrt{-1}\partial_{x_n}c(\xi')(x_0)}{|\xi|^2}-\frac{\sqrt{-1}c(\xi)|\xi'|^2h'(0)}{|\xi|^4}.
\end{equation}
By (2.1) in \cite{Wa3} and the Cauchy integral formula, we obtain
\begin{eqnarray}
\pi^+_{\xi_n}\Big[\frac{c(\xi)}{|\xi|^4}\Big](x_0)|_{|\xi'|=1}
&=&\pi^+_{\xi_n}\Big[\frac{c(\xi')+\xi_nc(dx_n)}{(1+\xi_n^2)^2}\Big]\nonumber\\
&=&\frac{1}{2\pi i}\lim_{u\rightarrow 0^-}\int_{\Gamma^+}\frac{\frac{c(\xi')+\eta_nc(dx_n)}{(\eta_n+i)^2(\xi_n+iu-\eta_n)}}
{(\eta_n-i)^2}d\eta_n \nonumber\\
&=&\Big[\frac{c(\xi')+\eta_nc(dx_n)}{(\eta_n+i)^2(\xi_n-\eta_n)}\Big]^{(1)}|_{\eta_n=i}\nonumber\\
&=&-\frac{ic(\xi')}{4(\xi_n-i)}-\frac{c(\xi')+ic(dx_n)}{4(\xi_n-i)^2}
\end{eqnarray}
Similarly,
 \begin{equation}
\pi^+_{\xi_n}\left[\frac{\sqrt{-1}\partial_{x_n}c(\xi')}{|\xi|^2}\right](x_0)|_{|\xi'|=1}
=\frac{\partial_{x_n}[c(\xi')](x_0)}{2(\xi_n-i)}.
\end{equation}
Combining (3.17), (3.18) and (3.19), we have
 \begin{equation}
\pi^+_{\xi_n}\partial_{x_n}\sigma_{-1}(D_{F}^{-1})(x_0)|_{|\xi'|=1}=\frac{\partial_{x_n}[c(\xi')](x_0)}{2(\xi_n-i)}+\sqrt{-1}h'(0)
\left[\frac{ic(\xi')}{4(\xi_n-i)}+\frac{c(\xi')+ic(dx_n)}{4(\xi_n-i)^2}\right].
\end{equation}
By the relation of the Clifford action and ${\rm tr}{AB}={\rm tr }{BA}$, we have the equalities:
\begin{eqnarray}
&&{\rm tr}[c(\xi')c(dx_n)]=0;~~{\rm tr}[c(dx_n)^2]=-8;~~{\rm tr}[c(\xi')^2](x_0)|_{|\xi'|=1}=-8;\nonumber\\
&&{\rm tr}[\partial_{x_n}c(\xi')c(dx_n)]=0;~~{\rm tr}[\partial_{x_n}c(\xi')c(\xi')](x_0)|_{|\xi'|=1}=-4h'(0).
\end{eqnarray}
From equation (3.16), (3.20) and (3.21), one sees that
\begin{eqnarray}
&&h'(0){\rm tr}\left\{\left[\frac{ic(\xi')}{4(\xi_n-i)}+\frac{c(\xi')+ic(dx_n)}{4(\xi_n-i)^2}\right]\times
\left[\frac{6\xi_nc(dx_n)+2c(\xi')}{(1+\xi_n^2)^2}-\frac{8\xi_n^2[c(\xi')+\xi_nc(dx_n)]}{(1+\xi_n^2)^3}\right]
\right\}(x_0)|_{|\xi'|=1}\nonumber\\
&=&-8h'(0)\frac{-2i\xi_n^2-\xi_n+i}{(\xi_n-i)^4(\xi_n+i)^3}.
\end{eqnarray}
Similarly, we have
\begin{eqnarray}
&&-\sqrt{-1}{\rm tr}\left\{\left[\frac{\partial_{x_n}[c(\xi')](x_0)}{2(\xi_n-i)}\right]
\times\left[\frac{6\xi_nc(dx_n)+2c(\xi')}{(1+\xi_n^2)^2}-\frac{8\xi_n^2[c(\xi')+\xi_nc(dx_n)]}
{(1+\xi_n^2)^3}\right]\right\}(x_0)|_{|\xi'|=1}\nonumber\\
&=&-4\sqrt{-1}h'(0)\frac{3\xi_n^2-1}{(\xi_n-i)^4(\xi_n+i)^3}.
\end{eqnarray}
Combining (3.22) and (3.23), we obtain
\begin{eqnarray}
{\rm case~ a)~II)}&=&-\int_{|\xi'|=1}\int^{+\infty}_{-\infty}\frac{2ih'(0)(\xi_n-i)^2}
{(\xi_n-i)^4(\xi_n+i)^3}d\xi_n\sigma(\xi')dx'\nonumber\\
&=&-2ih'(0)\Omega_3\int_{\Gamma^+}\frac{1}{(\xi_n-i)^2(\xi_n+i)^3}d\xi_ndx'\nonumber\\
&=&-2ih'(0)\Omega_32\pi i[\frac{1}{(\xi_n+i)^3}]^{(1)}|_{\xi_n=i}dx'\nonumber\\
&=&-\frac{3}{4}\pi h'(0)\Omega_3dx'.
\end{eqnarray}

 {\bf case a)~III)}~$r=-1,~l=-1~j=|\alpha|=0,~k=1$

From (3.4) we have
 \begin{equation}
{\rm case~ a)~III)}=-\frac{1}{2}\int_{|\xi'|=1}\int^{+\infty}_{-\infty}
{\rm trace} [\partial_{\xi_n}\pi^+_{\xi_n}\sigma_{-1}(D_{F}^{-1})\times
\partial_{\xi_n}\partial_{x_n}\sigma_{-1}(D_{F}^{-1})](x_0)d\xi_n\sigma(\xi')dx',
\end{equation}
Then an application of Lemma 2.2 in \cite{Wa3} shows
  \begin{equation}
\partial_{\xi_n}\partial_{x_n}q_{-1}(x_0)|_{|\xi'|=1}=-\sqrt{-1}h'(0)
\left[\frac{c(dx_n)}{|\xi|^4}-4\xi_n\frac{c(\xi')+\xi_nc(dx_n)}{|\xi|^6}\right]-
\frac{2\xi_n\sqrt{-1}\partial_{x_n}c(\xi')(x_0)}{|\xi|^4},
\end{equation}
and
  \begin{equation}
\partial_{\xi_n}\pi^+_{\xi_n}q_{-1}(x_0)|_{|\xi'|=1}=-\frac{c(\xi')+ic(dx_n)}{2(\xi_n-i)^2}.
\end{equation}
Similarly to (3.22) and (3.23), we have
\begin{eqnarray}
&&{\rm tr}\left\{\frac{c(\xi')+ic(dx_n)}{2(\xi_n-i)^2}\times
\sqrt{-1}h'(0)\left[\frac{c(dx_n)}{|\xi|^4}-4\xi_n\frac{c(\xi')+\xi_nc(dx_n)}{|\xi|^6}\right]\right\}\nonumber\\
&=&4h'(0)\frac{i-3\xi_n}{(\xi_n-i)^4(\xi_n+i)^3};
\end{eqnarray}
and
 \begin{equation}
{\rm tr}\left[\frac{c(\xi')+ic(dx_n)}{2(\xi_n-i)^2}\times
\frac{2\xi_n\sqrt{-1}\partial_{x_n}c(\xi')(x_0)}{|\xi|^4}\right]
=-4h'(0)\sqrt{-1}\frac{\xi_n}{(\xi_n-i)^4(\xi_n+i)^2}.
\end{equation}
Therefore, \textbf{case a) III)}$=\frac{3}{4}\pi h'(0)\Omega_3dx'$.

 {\bf case b)}~$r=-2,~l=-1,~k=j=|\alpha|=0$

From (3.4) we have
 \begin{equation}
{\rm case~ b)}=-i\int_{|\xi'|=1}\int^{+\infty}_{-\infty}
{\rm trace} [\pi^+_{\xi_n}\sigma_{-2}(D_{F}^{-1})\times
\partial_{\xi_n}\sigma_{-1}(D_{F}^{-1})](x_0)d\xi_n\sigma(\xi')dx',
\end{equation}
 By Lemma 2.1 and Lemma 2.2 in \cite{Wa3},  we obtain
\begin{equation}
\partial_{\xi_n}\sigma_{-1}(D_{F}^{-1})(x_0)|_{|\xi'|=1}=\sqrt{-1}
\left[\frac{c(dx_n)}{1+\xi_n^2}-\frac{2\xi_nc(\xi')+2\xi_n^2c(dx_n)}{(1+\xi_n^2)^2}\right]
=\frac{(i-i\xi_n^2)c(dx_n)-2i\xi_nc(\xi')}{(1+\xi_n^2)^2},
\end{equation}
and
  \begin{equation}
\sigma_{-2}(D_{F}^{-1})(x_0)=\frac{c(\xi)p_0(x_0)c(\xi)}{|\xi|^4}+\frac{c(\xi)}{|\xi|^6}c(dx_n)
[\partial_{x_n}[c(\xi')](x_0)|\xi|^2-c(\xi)h'(0)|\xi|^2_{\partial
M}].
\end{equation}
From (3.32), one sees that
 \begin{eqnarray}
&&\pi^+_{\xi_n}\sigma_{-2}(D_{F}^{-1})(x_0)|_{|\xi'|=1}\nonumber\\
&=&\pi^+_{\xi_n}\left[\frac{c(\xi)p_0(x_0)c(\xi)+c(\xi)c(dx_n)\partial_{x_n}[c(\xi')](x_0)}{(1+\xi_n^2)^2}\right]
-h'(0)\pi^+_{\xi_n}\left[\frac{c(\xi)c(dx_n)c(\xi)}{(1+\xi_n^2)^3}\right]\nonumber\\
&:=&B_1-B_2,
\end{eqnarray}
where
\begin{eqnarray}
B_2&=&h'(0)\pi^+_{\xi_n}\left[\frac{c(\xi)c(dx_n)c(\xi)}{(1+\xi_n^2)^3}\right]\nonumber\\
&=&h'(0)\pi_{\xi_n}^+\left[\frac{-\xi_n^2c(dx_n)^2-2\xi_nc(\xi')+c(dx_n)}{(1+\xi_n^2)^3}\right] \nonumber\\
&=&\frac{h'(0)}{2}\left[\frac{-\eta^2_nc(dx_n)-2\eta_nc(\xi')+c(dx_n)}{(\eta_n+i)^3(\xi_n-\eta_n)}\right]^{(2)}|_{\eta_n=i}\nonumber\\
&=&\frac{h'(0)}{2}\left[\frac{c(dx_n)}{4i(\xi_n-i)}+\frac{c(dx_n)-ic(\xi')}{8(\xi_n-i)^2}
+\frac{3\xi_n-7i}{8(\xi_n-i)^3}[ic(\xi')-c(dx_n)]\right].
\end{eqnarray}
 By (3.31) and (3.34), we have
\begin{equation}
{\rm tr }[B_2\times\partial_{\xi_n}\sigma_{-1}(D_{F}^{-1})(x_0)]|_{|\xi'|=1}
=\frac{\sqrt{-1}}{2}h'(0)\frac{-i\xi_n^2-\xi_n+4i}{4(\xi_n-i)^3(\xi_n+i)^2}
{\rm tr}_{({S(F)\otimes\wedge(F^{\perp,\star})})}[{\rm id}],
\end{equation}
where ${\rm tr}_{({S(F)\otimes\wedge(F^{\perp,\star})})}[{\rm id}]=8$.
Hence
\begin{equation}
{\rm tr }[B_2\times\partial_{\xi_n}\sigma_{-1}(D_{F}^{-1})(x_0)]|_{|\xi'|=1}
=h'(0)\frac{\xi_n^2-i\xi_n-4}{(\xi_n-i)^3(\xi_n+i)^2}.
\end{equation}
Similarly to (3.18), we have
\begin{eqnarray}
B_1&=&\frac{-1}{4(\xi_n-i)^2}\Big[(2+i\xi_n)c(\xi')p_0c(\xi')+i\xi_nc(dx_n)p_0c(dx_n)\nonumber\\
&&+(2+i\xi_n)c(\xi')c(dx_n)\partial_{x_n}c(\xi')+ic(dx_n)p_0c(\xi')+ic(\xi')p_0c(dx_n)-i\partial_{x_n}c(\xi')\Big]\nonumber\\
&:=&C_1+C_2,
\end{eqnarray}
where
\begin{equation}
C_1:=\frac{-1}{4(\xi_n-i)^2}\big[(2+i\xi_n)c(\xi')p_0c(\xi')+i\xi_nc(dx_n)p_0c(dx_n)+ic(dx_n)p_0c(\xi')+ic(\xi')p_0c(dx_n)\big],
\end{equation}
\begin{equation}
C_2:=\frac{-1}{4(\xi_n-i)^2}\big[(2+i\xi_n)c(\xi')c(dx_n)\partial_{x_n}c(\xi')-i\partial_{x_n}c(\xi')\big].
\end{equation}
Combining (3.31) and (3.39), we have
\begin{equation}
{\rm tr }[C_2\times\partial_{\xi_n}\sigma_{-1}(D_{F}^{-1})(x_0)]|_{|\xi'|=1}
=h'(0)\frac{\xi_n^2-i\xi_n-2}{(\xi_n-i)^3(\xi_n+i)^2}.
\end{equation}

On the other hand, let $c(\xi')=\sum_{j=1}^{p}a_{j}c(f_{j})+\sum_{u=1}^{q}b_{u}c(h_{u})~(a_{j}^{2}+b_{u}^{2}=1), c(dx_n)=c(h_{q})$.
By the trace identity  $\texttt{tr}(AB)=\texttt{tr}(BA)$, $\texttt{tr}(A\otimes B)=\texttt{tr}(A)\cdot\texttt{tr}(B)$ and the
relation of the Clifford action, we obtain
\begin{equation}
\texttt{tr}[c(f_i)c(f_j)c(h_s)c(h_q)]=\texttt{tr}(c(f_i)c(f_j))\cdot\texttt{tr}(c(h_s)c(h_q))=\delta_i^{j}\delta_s^{q}2^{p+q},
\end{equation}
and
\begin{equation}
\texttt{tr}\big[c(h_s)[\bar{c}(h_r)\bar{c}(h_t)-c(h_r)c(h_t)]c(h_q)\big]=-\texttt{tr}[c(h_s)c(h_r)c(h_t)c(h_q)]
=-(\delta_r^{s}\delta_t^{q}-\delta_r^{q}\delta_s^{t})2^{q}.
\end{equation}
Similarly,
\begin{eqnarray}
&&\texttt{tr}[c(f_i)c(f_k)c(f_l)c(f_j)]=(\delta_i^{k}\delta_l^{j}-\delta_i^{l}\delta_k^{j})2^{p}; \nonumber\\
&&\texttt{tr}[c(h_s)c(h_t)c(f_i)c(f_j)]=\delta_s^{t}\delta_i^{j}2^{p+q}; \nonumber\\
&&\texttt{tr}\big[c(h_s)[\bar{c}(h_r)\bar{c}(h_t)-c(h_r)c(h_t)]c(h_u)\big]=-(\delta_r^{s}\delta_t^{u}-\delta_r^{u}\delta_s^{t})2^{q};\nonumber\\
&&\texttt{tr}[c(f_i)c(f_j)c(h_s)c(h_u)]=\delta_i^{j}\delta_s^{u}2^{p+q},
\end{eqnarray}
the other is zero.

Combining (3.31), (3.38),  and (3.41)-(3.43),  we have
\begin{eqnarray}
&&{\rm tr }[C_1\times\partial_{\xi_n}\sigma_{-1}(D_{F}^{-1})(x_0)]|_{|\xi'|=1}\nonumber\\
&=&\frac{-1}{4(\xi_n-i)^2(1+\xi_n^2)^2}
    {\rm tr}\Big\{\Big[(2+i\xi_n)c(\xi')p_0c(\xi')+i\xi_nc(dx_n)p_0c(dx_n)\nonumber\\
&&+ic(dx_n)p_0c(\xi')+ic(\xi')p_0c(dx_n)\Big]
[(i-i\xi_n^2)c(dx_n)-2i\xi_nc(\xi')]\Big\}(x_0)|_{|\xi'|=1}\nonumber\\
&=&\frac{-1}{4(\xi_n-i)^2(1+\xi_n^2)^2}
    \Big[(-2i\xi_n^{2}-4\xi_n+2i){\rm tr}[p_0c(dx_n)]+(-2\xi_n^{2}+4i\xi_n+2){\rm tr}[p_0c(\xi')]\Big](x_0)|_{|\xi'|=1}  \nonumber\\
&=&\frac{i}{2(\xi_n-i)^2(\xi_n+i)^2}\Big[-(\frac{1}{4}\sum_{s,t}\omega_{s,t}(h_{s})(\delta_r^{s}\delta_t^{q}-\delta_r^{q}\delta_s^{t}))
+\frac{1}{2}\sum_{i=1}^{p}\langle\nabla^{TM}_{f_{i}}f_{i}, h_{q}\rangle\Big]
\times{\rm tr}_{({S(F)\widehat{\otimes}\wedge(F^{\perp,\star})})}[{\rm id}](x_0)|_{|\xi'|=1}\nonumber\\
&&+\frac{1}{2(\xi_n-i)^2(\xi_n+i)^2}
\Big[\sum_{j=1}^{p}a_{j}\Big(-\frac{1}{4}\sum_{i,j}\omega_{i,j}(f_{i})(\delta_i^{k}\delta_l^{j}-\delta_i^{l}\delta_k^{j}))
+\frac{1}{2}\sum_{i=1}^{p}\sum_{s,t=1}^{q}\langle\nabla^{TM}_{h_{s}}h_{t}, f_{i}\rangle \delta_s^{t}\delta_i^{j} \Big)\nonumber\\
&&+\sum_{u=1}^{q}b_{u}\Big(-\frac{1}{4}\sum_{s,u}\omega_{s,u}(h_{s})(\delta_r^{s}\delta_t^{u}-\delta_r^{u}\delta_s^{t}))
+\frac{1}{2}\sum_{i=1}^{p}\sum_{s=1}^{q}\langle\nabla^{TM}_{f_{i}}f_{i}, h_{s}\rangle\delta_i^{j}\delta_s^{u} \Big)\Big]
\times{\rm tr}_{({S(F)\widehat{\otimes}\wedge(F^{\perp,\star})})}[{\rm id}](x_0)|_{|\xi'|=1}\nonumber\\
&=&\frac{2i}{(\xi_n-i)^2(\xi_n+i)^2}\Big[\sum_{s,q}\omega_{q,s}(h_{s})+
\sum_{i=1}^{p}\langle\nabla^{TM}_{f_{i}}f_{i}, h_{q}\rangle\Big](x_0)|_{|\xi'|=1}
+\frac{2}{(\xi_n-i)^2(\xi_n+i)^2}\Big[\sum_{j=1}^{p}a_{j}\times\nonumber\\
&&\Big(-\sum_{i,j}\omega_{i,j}(f_{i})+
\sum_{i=1}^{p}\sum_{s=1}^{q}\langle\nabla^{TM}_{h_{s}}h_{s}, f_{j}\rangle \Big)
+\sum_{u=1}^{q}b_{u}\Big(-\sum_{s,u}\omega_{s,u}(h_{s})+
\sum_{i=1}^{p}\sum_{s=1}^{q}\langle\nabla^{TM}_{f_{i}}f_{i}, h_{u}\rangle \Big)\Big](x_0)|_{|\xi'|=1}\nonumber\\
&=&\frac{2i}{(\xi_n-i)^2(\xi_n+i)^2}\Big[\sum_{s=1}^{q}\langle\nabla^{TM}_{h_{s}}s_{s}, h_{q}\rangle+
\sum_{i=1}^{p}\langle\nabla^{TM}_{f_{i}}f_{i}, h_{q}\rangle\Big](x_0)|_{|\xi'|=1}
+\frac{2}{(\xi_n-i)^2(\xi_n+i)^2}\Big[\Big(\langle\nabla^{TM}_{f_{i}}f_{j}, \sum_{j=1}^{p}a_{j}f_{j}\rangle\nonumber\\
&&+\langle\nabla^{TM}_{f_{i}}f_{j}, \sum_{u=1}^{q}b_{u}h_{u}\rangle+\langle\nabla^{TM}_{h_{s}}h_{s}, \sum_{j=1}^{p}a_{j}f_{j}\rangle
+\langle\nabla^{TM}_{h_{s}}h_{s}, \sum_{u=1}^{q}b_{u}h_{u}\rangle\Big](x_0)|_{|\xi'|=1}\nonumber\\
&=&\frac{2i}{(\xi_n-i)^2(\xi_n+i)^2}\Big(\sum_{i=1}^{p+q}\langle\nabla^{TM}_{\tilde{e}_{i}}\tilde{e}_{i}, dx_{n}\rangle\Big)(x_0)|_{|\xi'|=1}
+\frac{2}{(\xi_n-i)^2(\xi_n+i)^2}\Big(\sum_{i=1}^{p+q}\langle\nabla^{TM}_{\tilde{e}_{i}}\tilde{e}_{i}, \xi'\rangle\Big)(x_0)|_{|\xi'|=1}
\nonumber\\
&=&0,
\end{eqnarray}
where we have used the fact that in normal coordinates, $\sum_{i=1}^{p+q}
\langle\nabla^{TM}_{\tilde{e}_{i}}\tilde{e}_{i}, \xi_{i}\rangle(x_0)=\Big(\sum_{i=1}^{p+q}\langle\nabla^{TM}_{f_{i}}f_{j}, \xi_{i}\rangle
+\sum_{i=1}^{p+q}\langle\nabla^{TM}_{h_{s}}s_{s}, \xi_{i}\rangle\Big)(x_0)=0.$

Combining (3.35), (3.40) and (3.44), we obtain
\begin{eqnarray}
{\rm case~ b)}
&=&-i\int_{|\xi'|=1}\int^{+\infty}_{-\infty}{\rm trace} [\pi^+_{\xi_n}\sigma_{-2}(D_{F}^{-1})\times
\partial_{\xi_n}\sigma_{-1}(D_{F}^{-1})](x_0)d\xi_n\sigma(\xi')dx'\nonumber\\
&=&-i\int_{|\xi'|=1}\int^{+\infty}_{-\infty}{\rm trace} [(C_1+C_2-B_2)\times
\partial_{\xi_n}\sigma_{-1}(D_{F}^{-1})](x_0)d\xi_n\sigma(\xi')dx'\nonumber\\
&=&-i\int_{|\xi'|=1}\int^{+\infty}_{-\infty}\Big[
+h'(0)\frac{\xi_n^2-i\xi_n-2}{(\xi_n-i)^3(\xi_n+i)^2}-h'(0)\frac{\xi_n^2-i\xi_n-4}{(\xi_n-i)^3(\xi_n+i)^2}\Big](x_0)d\xi_n\sigma(\xi')dx'
\nonumber\\
&=&\int_{|\xi'|=1}\int^{+\infty}_{-\infty}\Big[\frac{-2ih'(0)}{(\xi_n-i)^3(\xi_n+i)^2}
\Big](x_0)d\xi_n\sigma(\xi')dx'\nonumber\\
&=&\frac{3}{4} h'(0)\pi\Omega_3dx'.
\end{eqnarray}

{\bf  case c)}~$r=-1,~l=-2,~k=j=|\alpha|=0$

From (3.4) we have
 \begin{equation}
{\rm case~ c)}=-i\int_{|\xi'|=1}\int^{+\infty}_{-\infty}{\rm trace} [\pi^+_{\xi_n}\sigma_{-1}(D_{F}^{-1})\times
\partial_{\xi_n}\sigma_{-2}(D_{F}^{-1})](x_0)d\xi_n\sigma(\xi')dx'.
\end{equation}
From (3.11) and (3.12), we have
\begin{equation}
\pi^+_{\xi_n}\sigma_{-1}(D_{F}^{-1})(x_0)|_{|\xi'|=1}=\frac{c(\xi')+ic(dx_n)}{2(\xi_n-i)},
\end{equation}
and
\begin{eqnarray}
&&\partial_{\xi_n}\sigma_{-2}(D_{F}^{-1})(x_0)|_{|\xi'|=1}\nonumber\\
&=&\frac{1}{(1+\xi_n^2)^3}\Big[(2\xi_n-2\xi_n^3)c(dx_n)p_0c(dx_n)+(1-3\xi_n^2)c(dx_n)p_0c(\xi')\nonumber\\
&&+(1-3\xi_n^2)c(\xi')p_0c(dx_n)-4\xi_nc(\xi')p_0c(\xi')
+(3\xi_n^2-1)\partial_{x_n}c(\xi')-4\xi_nc(\xi')c(dx_n)\partial_{x_n}c(\xi')\nonumber\\
&&+2h'(0)c(\xi')+2h'(0)\xi_nc(dx_n)\Big]
+6\xi_nh'(0)\frac{c(\xi)c(dx_n)c(\xi)}{(1+\xi^2_n)^4}.
\end{eqnarray}
Similarly to (3.44), we obtain
\begin{equation}
{\rm trace}[\pi^+_{\xi_n}\sigma_{-1}(D_{F}^{-1})\times\partial_{\xi_n}\sigma_{-2}(D_{F}^{-1})](x_0)|_{|\xi'|=1}
=h'(0)\frac{-6}{(\xi_n-i)^3(\xi_n+i)^2}+h'(0)\frac{24i\xi_n}{(\xi_n-i)^3(\xi_n+i)^4}.
\end{equation}
Hence
\begin{eqnarray}
{\rm case~ c)}
&=&-i\int_{|\xi'|=1}\int^{+\infty}_{-\infty}{\rm trace} [\pi^+_{\xi_n}\sigma_{-1}(D_{F}^{-1})\times
\partial_{\xi_n}\sigma_{-2}(D_{F}^{-1})](x_0)d\xi_n\sigma(\xi')dx'\nonumber\\
&=&-i\int_{|\xi'|=1}\int^{+\infty}_{-\infty}\Big[
h'(0)\frac{-6}{(\xi_n-i)^3(\xi_n+i)^2}+h'(0)\frac{24i\xi_n}{(\xi_n-i)^3(\xi_n+i)^4}
\Big](x_0)d\xi_n\sigma(\xi')dx'\nonumber\\
&=&\int_{|\xi'|=1}\int^{+\infty}_{-\infty}\Big[h'(0)\frac{6i}{(\xi_n-i)^3(\xi_n+i)^2}
\Big](x_0)d\xi_n\sigma(\xi')dx'\nonumber\\
&&+\int_{|\xi'|=1}\int^{+\infty}_{-\infty}\Big[h'(0)\frac{24\xi_n}{(\xi_n-i)^3(\xi_n+i)^4}
\Big](x_0)d\xi_n\sigma(\xi')dx'\nonumber\\
&=&-\frac{3}{4} h'(0)\pi\Omega_3dx'
\end{eqnarray}

Since $\Phi$  is the sum of the \textbf{case a, b} and \textbf{c}, so is zero. Then we have
\begin{thm}\label{th:53}
Let M be a 4-dimensional compact connected foliation with the boundary $\partial M$ and the metric $g^{M}$ as above ,
and $D_{F}$ be the sub-Dirac operator on  $\Gamma({S(F)\otimes\wedge(F^{\perp,\star})})$, then
\begin{equation}
Vol_{4}^{(1,1)}(M)=-\frac{1}{24\sqrt{2}\cdot2^p\pi^{p+\frac{q}{2}+1}}\int_Mr_M\texttt{d}vol.
\end{equation}
where $r_M$ be the scaler curvature of the foliation.
\end{thm}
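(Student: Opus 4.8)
\section*{Proof proposal}

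The plan is to read off the result from the splitting formula (3.3), which expresses $\widetilde{Wres}[\pi^{+}D_{F}^{-1}\circ\pi^{+}D_{F}^{-1}]$ — equivalently $Vol_{4}^{(1,1)}(M)$ by Definition 3.2 — as the sum of an interior term $\int_{M}\int_{|\xi|=1}\mathrm{trace}[\sigma_{-4}(D_{F}^{-2})]\sigma(\xi)\,dx$ and a boundary term $\int_{\partial M}\Phi$. It therefore suffices to evaluate these two pieces separately and add.

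For the interior term I would invoke that $[\sigma_{-4}(D_{F}^{-2})]|_{M}$ coincides with the symbol in the boundaryless case, so Theorem 2.9 applies verbatim with $n=4$; together with $v_{4,2}=\tfrac{1}{2\pi\sqrt2}$ this is exactly the computation (3.6), giving the interior contribution $-\tfrac{1}{24\sqrt2\cdot 2^{p}\pi^{p+q/2+1}}\int_{M}r_{M}\,dvol$, which is already the claimed right-hand side. Hence the whole theorem reduces to the assertion $\int_{\partial M}\Phi=0$.

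For the boundary term I would expand $\Phi$ from its defining integral (3.4). The homogeneity constraint $-r-\ell+k+j+|\alpha|=3$ with $r,\ell\le-1$ has only the five solutions giving cases a)~I), a)~II), a)~III), b), c). Using the symbol expansions (3.11)--(3.12) for $\sigma_{-1}(D_{F}^{-1})$ and $\sigma_{-2}(D_{F}^{-1})$, the formula (3.10) for $p_{0}=\sigma_{0}(D_{F})$, the near-boundary metric $g^{M}=h(x_{n})^{-1}g^{\partial M}+dx_{n}^{2}$ with $h(0)=1$, the $\pi^{+}_{\xi_{n}}$-projections computed via the Cauchy integral formula, and the Clifford trace identities (3.21), (3.42)--(3.45), one evaluates each case: a)~I) vanishes since $\partial_{x_{i}}\sigma_{-1}(D_{F}^{-1})(x_{0})=0$ for $i<n$ in the chosen normal coordinates, while the residue evaluations over $\Gamma^{+}$ reduce the other four to multiples of $h'(0)\pi\Omega_{3}\,dx'$, with the telescoping cancellations a)~II)$\,+\,$a)~III)$\,=0$ and b)$\,+\,$c)$\,=0$. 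Summing, $\Phi=0$, and combining with the interior term yields the stated formula.

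The main obstacle is the bookkeeping in cases b) and c): there one must split $\pi^{+}_{\xi_{n}}\sigma_{-2}(D_{F}^{-1})$ (resp.\ $\partial_{\xi_{n}}\sigma_{-2}(D_{F}^{-1})$) into the pieces $B_{1}=C_{1}+C_{2}$ and $B_{2}$, and show that the $p_{0}$-dependent part $C_{1}$ contributes nothing. This rests on the identity $\sum_{i=1}^{p+q}\langle\nabla^{TM}_{\tilde e_{i}}\tilde e_{i},\tilde e_{k}\rangle(x_{0})=0$ valid in normal coordinates, which forces both the normal ($dx_{n}$) and tangential ($\xi'$) components of $\mathrm{trace}[C_{1}\cdot\partial_{\xi_{n}}\sigma_{-1}(D_{F}^{-1})]$ to vanish; once that is established, only the $h'(0)$-terms survive and the elementary residue arithmetic closes the argument.
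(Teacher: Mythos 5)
Your proposal follows exactly the paper's own argument: split $Vol_4^{(1,1)}$ via (3.3) into the interior term (evaluated by Theorem 2.9 with $v_{4,2}=\tfrac{1}{2\pi\sqrt2}$, giving the stated right-hand side) and the boundary term $\int_{\partial M}\Phi$, then show $\Phi=0$ by the same five-case analysis with the cancellations a)~II)$+$a)~III)$=0$ and b)$+$c)$=0$, including the decomposition $B_1=C_1+C_2$, $B_2$ and the normal-coordinate identity killing the $C_1$ contribution. This is correct and is the paper's proof in all essentials.
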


\begin{rem}
Let M be a 4-dimensional compact connected foliation with the boundary $\partial M$ and the metric $g^{M}$ as above,
and $D_{F}$ be the sub-Dirac operator. When $n=p$, we get Theorem 2.5 in \cite{Wa3}. When $n=q$, we get Theorem 3.1 in \cite{Wa3}.
\end{rem}

\subsection{The gravitational action for $4$-dimensional foliation with boundary}

 Firstly, we recall the Einstein-Hilbert action for manifolds with boundary in \cite{Wa3},
 \begin{equation}
I_{\rm Gr}=\frac{1}{16\pi}\int_Ms{\rm dvol}_M+2\int_{\partial M}K{\rm dvol}_{\partial_M}:=I_{\rm {Gr,i}}+I_{\rm {Gr,b}},
\end{equation}
 where
 \begin{equation}
K=\sum_{1\leq i,j\leq {n-1}}K_{i,j}g_{\partial M}^{i,j};~~K_{i,j}=-\Gamma^n_{i,j},
\end{equation}
 and $K_{i,j}$ is the second fundamental form, or extrinsic
curvature. Take the metric in Section 2, then by Lemma A.2 in \cite{Wa3},
$K_{i,j}(x_0)=-\Gamma^n_{i,j}(x_0)=-\frac{1}{2}h'(0),$ when $i=j<n$,
otherwise is zero. For $n=4$, we obtain
\begin{equation}
K(x_0)=\sum_{i,j}K_{i.j}(x_0)g_{\partial M}^{i,j}(x_0)=\sum_{i=1}^3K_{i,i}(x_0)=-\frac{3}{2}h'(0).
\end{equation}
 So
 \begin{equation}
I_{\rm {Gr,b}}=-3h'(0){\rm Vol}_{\partial M}.
\end{equation}
 Let $M$ be $4$-dimensional foliation with boundary and $P,P'$ be two pseudodifferential operators with transmission
property (see \cite{Wa1}) on $\widehat M$. From (4.4) in \cite{Wa3},  we have
\begin{equation}
\pi^+P\circ\pi^+P'=\pi^+(PP')+L(P,P')
\end{equation}
and $L(P,P')$ is leftover term which represents the difference between
the composition $\pi^+P\circ\pi^+P'$ in Boutet de Monvel algebra and
the composition $PP'$ in the classical pseudodifferential operators
algebra. By (3.4), we define locally
\begin{eqnarray}
&&{\rm res}_{1,1}(P,P'):=-\frac{1}{2}\int_{|\xi'|=1}\int^{+\infty}_{-\infty}
{\rm trace} [\partial_{x_n}\pi^+_{\xi_n}\sigma_{-1}(P)\times
\partial_{\xi_n}^2\sigma_{-1}(P')]d\xi_n\sigma(\xi')dx'; \\
&&{\rm res}_{2,1}(P,P'):=-i\int_{|\xi'|=1}\int^{+\infty}_{-\infty}
{\rm trace} [\pi^+_{\xi_n}\sigma_{-2}(P)\times
\partial_{\xi_n}\sigma_{-1}(P')]d\xi_n\sigma(\xi')dx'.
\end{eqnarray}

Hence, they represent the difference between the composition
$\pi^+P\circ\pi^+P'$ in Boutet de Monvel algebra and the composition
$PP'$ in the classical pseudodifferential operators algebra
partially. Then
\begin{equation}
{\rm case~ a)~ II)}={\rm res}_{1,1}(D_{F}^{-1},D_{F}^{-1});~{\rm case~ b)}={\rm res}_{2,1}(D_{F}^{-1},D_{F}^{-1}).
\end{equation}

 Now, we assume $\partial M$ is flat , then
$\{dx_i=e_i\},~g^{\partial M}_{i,j}=\delta_{i,j},~\partial
_{x_s}g^{\partial M}_{i,j}=0$. So ${\rm res}_{1,1}(D_{F}^{-1},D_{F}^{-1})$
and ${\rm res}_{2,1}(D_{F}^{-1},D_{F}^{-1})$ are two global forms locally
defined by the aboved oriented orthonormal basis $\{dx_i\}$. From case
a) II) and case b),  we have:

\begin{thm}
Let M be a 4-dimensional flat compact connected foliation with the boundary $\partial M$ and the metric $g^{M}$ as above ,
and $D_{F}$ the sub-Dirac operator on  $\Gamma({S(F)\otimes\wedge(F^{\perp,\star})})$, then
\begin{eqnarray}
&& \int_{\partial M}{\rm res}_{1,1}(D_{F}^{-1},D_{F}^{-1})=\frac{\pi}{4}\Omega_3I_{\rm {Gr,b}};  \\
&&\int_{\partial M}{\rm res}_{2,1}(D_{F}^{-1},D_{F}^{-1})=-\frac{1}{4}\pi\Omega_3I_{\rm {Gr,b}}.
\end{eqnarray}
\end{thm}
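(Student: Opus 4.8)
The plan is to reduce the statement to the local symbol computations already carried out in Subsection 3.1, together with the boundary Einstein--Hilbert identity obtained just above. Recall that by the very definitions of ${\rm res}_{1,1}$ and ${\rm res}_{2,1}$, one has ${\rm case~a)~II)}={\rm res}_{1,1}(D_{F}^{-1},D_{F}^{-1})$ and ${\rm case~b)}={\rm res}_{2,1}(D_{F}^{-1},D_{F}^{-1})$, and these integrands were evaluated there as
\[
{\rm res}_{1,1}(D_{F}^{-1},D_{F}^{-1})=-\frac{3}{4}\pi h'(0)\Omega_3\,dx',\qquad
{\rm res}_{2,1}(D_{F}^{-1},D_{F}^{-1})=\frac{3}{4}\pi h'(0)\Omega_3\,dx'.
\]
First I would invoke the flatness hypothesis on $\partial M$, namely $g^{\partial M}_{i,j}=\delta_{i,j}$ and $\partial_{x_s}g^{\partial M}_{i,j}=0$, to choose the oriented orthonormal frame $\{dx_i=e_i\}$ globally on $\partial M$. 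This is what turns the two pointwise expressions above (computed at an arbitrary fixed $x_0\in\partial M$ in normal coordinates) into genuine globally defined $(n-1)$-forms; no curvature of $\partial M$ enters, since the only geometric datum appearing in case~a)~II) and case~b) is $h'(0)$, together with the Clifford trace ${\rm tr}[{\rm id}]=2^{p+q}=8$ forced by $n=2p+q=4$.

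Next I would integrate over $\partial M$. Using $\int_{\partial M}dx'={\rm Vol}_{\partial M}$ gives
\[
\int_{\partial M}{\rm res}_{1,1}(D_{F}^{-1},D_{F}^{-1})=-\frac{3}{4}\pi h'(0)\Omega_3\,{\rm Vol}_{\partial M},\qquad
\int_{\partial M}{\rm res}_{2,1}(D_{F}^{-1},D_{F}^{-1})=\frac{3}{4}\pi h'(0)\Omega_3\,{\rm Vol}_{\partial M}.
\]
Then I would substitute the boundary gravitational term $I_{\rm {Gr,b}}=-3h'(0){\rm Vol}_{\partial M}$ established above, equivalently $h'(0){\rm Vol}_{\partial M}=-\frac{1}{3}I_{\rm {Gr,b}}$, which immediately yields
\[
\int_{\partial M}{\rm res}_{1,1}(D_{F}^{-1},D_{F}^{-1})=\frac{\pi}{4}\Omega_3 I_{\rm {Gr,b}},\qquad
\int_{\partial M}{\rm res}_{2,1}(D_{F}^{-1},D_{F}^{-1})=-\frac{\pi}{4}\Omega_3 I_{\rm {Gr,b}},
\]
as claimed.

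The main obstacle is not a new computation but the bookkeeping of signs and normalizations: one must check that ${\rm res}_{1,1}$ and ${\rm res}_{2,1}$, defined as the leftover contributions that distinguish $\pi^+P\circ\pi^+P'$ in the Boutet de Monvel algebra from $\pi^+(PP')$, coincide term for term with the case~a)~II) and case~b) integrands, and that the Cauchy-integral evaluations $\pi^+_{\xi_n}[\cdot]$ and the residue integrals over $\Gamma^+$ were performed with the correct orientation and the correct trace factor $8$. All of these have already been settled in Subsection 3.1, so the proof reduces to assembling those pieces with the elementary relation $I_{\rm {Gr,b}}=-3h'(0){\rm Vol}_{\partial M}$.
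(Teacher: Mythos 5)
Your proposal is correct and follows essentially the same route as the paper: identify ${\rm res}_{1,1}$ and ${\rm res}_{2,1}$ with the case a) II) and case b) integrands already computed in Subsection 3.1, use flatness of $\partial M$ to make these globally defined forms, integrate to get $\mp\frac{3}{4}\pi h'(0)\Omega_3{\rm Vol}_{\partial M}$, and substitute $I_{\rm Gr,b}=-3h'(0){\rm Vol}_{\partial M}$. The signs and the resulting coefficients $\pm\frac{\pi}{4}\Omega_3$ all check out against the paper.
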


\subsection{Computations of  $ \widetilde{{\rm
Wres}}[(\pi^+\widehat{D_{F}}^{-1})^2]$ for $3$-dimensional Foliations}

For an odd dimensional manifolds with boundary, as in Theorem 2.9 and (3.3), we have the formula
\begin{equation}
\widetilde{{\rm Wres}}[(\pi^+D_{F}^{-1})^2]=\int_{\partial M}\Phi.
\end{equation}
From (3.4), when $n=3$,  $ r-k-|\alpha|+l-j-1=-3,~~r,l\leq-1$, so we get $r=l=-1,~k=|\alpha|=j=0,$ then
\begin{equation}
\Phi=\int_{|\xi'|=1}\int^{+\infty}_{-\infty} {\rm trace}_{S(TM)}[ \sigma^+_{-1}(D_{F}^{-1})(x',0,\xi',\xi_n)
\times\partial_{\xi_n}\sigma_{-1}(D_{F}^{-1})(x',0,\xi',\xi_n)]d\xi_3\sigma(\xi')dx'.
\end{equation}
Similar to (3.20), by Lemma 3.2, we have
\begin{equation}
\sigma^+_{-1}(D^{-1})|_{|\xi'|=1}=\frac{\sqrt{-1}[c(\xi')+ic(dx_n)]}{2i(\xi_n-i)};
\end{equation}
and
\begin{equation}
\partial_{\xi_n}\sigma_{-1}(D^{-1})|_{|\xi'|=1}=\frac{\sqrt{-1}c(dx_n)}{1+\xi_n^2}
-\frac{2\sqrt{-1}\xi_nc(\xi)}{(1+\xi_n^2)^2}.
\end{equation}
We take the coordinates as in Section 3. Locally $S(TM)|_{\widetilde {U}}\cong
\widetilde {U}\times\wedge^{{\rm even}} _{\bf C}(2).$
 Let $\{\widetilde{f_1},\widetilde{f_2}\}$ be an orthonormal basis of $\wedge^{{\rm even}} _{\bf C}(2)$ and we will compute the trace
under this basis. Similarly to (3.21), we have
\begin{equation}
{\rm tr}[c(\xi')c(dx_n)]=0;~~{\rm tr}[c(dx_n)^2]=-4;~~{\rm tr}[c(\xi')^2](x_0)|_{|\xi'|=1}=-4.
\end{equation}
Combining (3.3) (3.4) and (3.5), we have
\begin{equation}
{\rm trace} [
\sigma^+_{-1}(D^{-1})\times\partial_{\xi_n}\sigma_{-2}(D^{-1})](x_0)|_{|\xi'|=1}=-\frac{1}{(\xi_n+i)^2(\xi_n-i)}.
\end{equation}
By (4.3), (4.6) and the Cauchy integral formula, we obtain
\begin{equation}
\Phi=i\pi\Omega_2{\rm vol}_{\partial M}=2i\pi^2{\rm vol}_{\partial M},
\end{equation}
where ${\rm vol}_{\partial M}$ denotes the canonical volume form of ${\partial M}$.

\begin{thm}
Let M be a 3-dimensional compact connected foliation with the boundary $\partial M$ and the metric $g^{M}$ as above ,
and $D_{F}$ the sub-Dirac operator on  $\Gamma({S(F)\otimes\wedge(F^{\perp,\star})})$, then
\begin{equation}
\widetilde{{\rm Wres}}[(\pi^+D^{-1})^2]=2i\pi^2{\rm Vol}_{\partial M},
\end{equation}
where ${\rm Vol}_{\partial M}$ denotes the canonical volume of ${\partial M}.$
\end{thm}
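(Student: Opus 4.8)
The plan is to use the Wodzicki residue formula for manifolds with boundary as recalled in (3.3), specialized to $n=3$ and $p_1=p_2=1$. Since the interior term in (3.3) involves $\sigma_{-3}(D_F^{-2})$ restricted to $M$, and for the $3$-dimensional case the relevant part of $\mathrm{Wres}[(\pi^+D_F^{-1})^2]$ reduces to the boundary contribution $\int_{\partial M}\Phi$ exactly as in the odd-dimensional analysis of \cite{Wa3} (formula (4.1) of this subsection), I would first justify that the interior term vanishes: this is the standard fact that the noncommutative residue density of a differential (or here, order $-2$) operator of the appropriate order integrates to zero on an odd-dimensional manifold, which is already folded into the statement (4.1). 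So the whole computation is $\int_{\partial M}\Phi$ with $\Phi$ given by (4.2), i.e.\ the single term coming from $r=\ell=-1$, $k=|\alpha|=j=0$.

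Next I would assemble the two symbol pieces. From Lemma 3.2 we have $\sigma_{-1}(D_F^{-1})=\sqrt{-1}c(\xi)/|\xi|^2$; applying $\pi^+_{\xi_n}$ at $|\xi'|=1$ gives (4.4), namely $\sigma^+_{-1}(D_F^{-1})|_{|\xi'|=1}=\sqrt{-1}[c(\xi')+ic(dx_n)]/(2i(\xi_n-i))$, via the same Cauchy-integral manipulation as in (3.18). Differentiating $\sigma_{-1}(D_F^{-1})$ in $\xi_n$ and restricting to $|\xi'|=1$ gives (4.5). Then I would compute the fiber trace $\mathrm{trace}_{S(TM)}[\sigma^+_{-1}(D_F^{-1})\times\partial_{\xi_n}\sigma_{-1}(D_F^{-1})]$ at $x_0$ using the Clifford trace identities (4.3): $\mathrm{tr}[c(\xi')c(dx_n)]=0$, $\mathrm{tr}[c(dx_n)^2]=\mathrm{tr}[c(\xi')^2]=-4$ on the $2$-dimensional spinor space $\wedge^{\mathrm{even}}_{\mathbf C}(2)$. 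This yields the rational function $-1/((\xi_n+i)^2(\xi_n-i))$ as in (4.7).

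Finally I would perform the $\xi_n$-integral by the Cauchy integral formula, closing the contour in the upper half-plane and picking up the double pole at $\xi_n=i$: $\int_{-\infty}^{+\infty}\frac{-1}{(\xi_n+i)^2(\xi_n-i)}\,d\xi_n = -2\pi i\,\big[\frac{1}{(\xi_n+i)^2}\big]\big|_{\xi_n=i}\cdot(\text{residue bookkeeping})$, which after keeping track of the overall factors in (4.2) produces $i\pi$ times the area of the unit cosphere $\Omega_2$ times $\mathrm{vol}_{\partial M}$. Using $\Omega_2 = 2\pi$ (the area of $S^1$... more precisely the constant appearing in \cite{Wa3}) gives $\Phi = 2i\pi^2\,\mathrm{vol}_{\partial M}$, and integrating over $\partial M$ yields the claimed $\widetilde{\mathrm{Wres}}[(\pi^+D_F^{-1})^2]=2i\pi^2\,\mathrm{Vol}_{\partial M}$.

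\textbf{Main obstacle.} The genuinely delicate point is not the contour integral but verifying that the interior term of (3.3) really contributes nothing and that $\Phi$ is reduced to the single summand (4.2): one must check that $\sigma^+_{-1}$ here means $\pi^+_{\xi_n}\sigma_{-1}$ and that no lower-order symbol terms of $D_F^{-1}$ (the $\sigma_{-2}$ piece, which for the sub-Dirac operator carries the extra $F^\perp$-curvature and connection terms visible in (3.12)) enter when $k=|\alpha|=j=0$ — indeed they cannot, because $r=\ell=-1$ forces both factors to be the leading symbol. A secondary subtlety is that, as remarked after Lemma 3.2, the orthonormal frame $\{\tilde e_i\}$ adapted to the boundary is not in general aligned with the $F$/$F^\perp$ splitting, so one should confirm that the trace computation (4.6) is insensitive to this, which it is because only $c(\xi')$, $c(dx_n)$ and the normalization $|\xi'|=1$ appear and the total Clifford trace on $S(F)\otimes\wedge(F^{\perp,\star})$ restricted to $\wedge^{\mathrm{even}}_{\mathbf C}(2)$ in the $3$-dimensional case is $4$.
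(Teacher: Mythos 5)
Your proposal follows essentially the same route as the paper's own computation: the odd-dimensional interior term drops out, $\Phi$ reduces to the single summand $r=\ell=-1$, $k=j=|\alpha|=0$, the two leading-symbol factors are assembled from Lemma 3.2, the Clifford trace identities kill the cross terms, and the Cauchy integral over $\xi_n$ together with $\Omega_2=2\pi$ yields $2i\pi^2\,{\rm Vol}_{\partial M}$. The points you flag as delicate (that only $\sigma_{-1}$ can appear since $r=\ell=-1$, and that the boundary-adapted frame need not respect the $F\oplus F^{\perp}$ splitting) are handled implicitly in the paper in exactly the way you describe, so the argument is complete.
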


\section{A Kastler-Kalau-Walze Type Theorem for 6-dimensional foliations with boundary}

 In this section, We compute the lower dimensional volume ${\rm Vol}^{(2,2)}_6$ for $6$-dimensional foliations with
boundary and get a Kastler-Kalau-Walze type theorem in this case.

\subsection{A Kastler-Kalau-Walze Type Theorem for 6-dimensional foliations with boundary}

Since $[\sigma_{-n}(D_{F}^{-p_{1}-p_{2}})]|_{M}$ has the same expression with the case of  without boundary in \cite{Wa3},
so locally we can use Theorem 2.4 to compute the first term. Now let us  give explicit formulas for the volume in dimension 6.
 Let $dim(S(F))=\tilde{l}$, then
 \begin{equation}
\int_M\int_{|\xi|=1}{\rm trace}_{S(TM)}[\sigma_{-6}(D_{F}^{-2-2})]\sigma(\xi)dx
=-\frac{\tilde{l}\times 2^{q}}{6\times (4\pi)^{3}}\int_{M}r_{M}\texttt{dvol}_{g}.
\end{equation}
Hence, we only need to compute $\int_{\partial M}\Phi$.

 Firstly, we give the symbol expansion of $D_{F}^{-2}$.  Recall the definition of the Dirac operator $D_{F}$.
 Let $\nabla^{TM}$ denote the Levi-civita connection about $g^M$. In the local coordinates $\{x_i; 1\leq i\leq n\}$ and
 the fixed orthonormal frame $\{\widetilde{e_1},\cdots,\widetilde{e_n}\}$, the connection matrix $(\omega_{s,t})$
is defined by
\begin{equation}
\nabla^{TM}(\widetilde{e_1},\cdots,\widetilde{e_n})= (\widetilde{e_1},\cdots,\widetilde{e_n})(\omega_{s,t}).
\end{equation}
Let $c(\widetilde{e_i})$ denotes the Clifford action.
Let $g^{ij}=g(dx_i,dx_j)$ and
\begin{equation}
\nabla^{TM}_{\partial_i}\partial_j=\sum_k\Gamma_{ij}^k\partial_k;
~\Gamma^k=g^{ij}\Gamma_{ij}^k.
\end{equation}
Let the cotangent vector $\xi=\sum \xi_jdx_j$ and $\xi^j=g^{ij}\xi_i$.
By the composition formula of psudodifferential operators in \cite{WW}
and direct computations, we obtain

\begin{lem}
 Let $D_{F}$ be the sub-Dirac operator associated to $g$ on  $\Gamma({S(F)\otimes\wedge(F^{\perp,\star})})$.
Then
\begin{eqnarray}
\sigma_{-2}(D_{F}^{-2})&=&|\xi|^{-2}; \\
\sigma_{-3}(D_{F}^{-2})&=&-\sqrt{-1}|\xi|^{-4}\xi_k\Big(\Gamma^k-2\sigma^k\otimes {\rm Id}_{\wedge(F^{\perp,\star})}
      -{\rm Id}_{S(F)}\otimes2\tilde{\sigma}^k\nonumber\\
      &&-\frac{1}{2}\sum_{i,j=1}^{2p}\sum_{s=1}^{q}\langle\nabla^{TM}_{\partial_k}f_{j}, h_{s}\rangle c(f_{j})c(h_{s})
-\frac{1}{2}\sum_{s,t=1}^{q}\sum_{i=1}^{2p}\langle\nabla^{TM}_{\partial_k}f_{j}, h_{s}\rangle c(f_{j})c(h_{s})\Big) \nonumber\\
 &&-\sqrt{-1}|\xi|^{-6}2\xi^j\xi_\alpha\xi_\beta
\partial_jg^{\alpha\beta}.
\end{eqnarray}
where $\sigma_k=-\frac{1}{4}\sum_{k,l}\omega_{k,l}(\partial_k)c(f_{k})c(f_{l})$,
$\tilde{\sigma}_k=\frac{1}{4}\sum_{r,t}\omega_{r,t}(\partial_k)[\bar{c}(h_{r})\bar{c}(h_{t})-c(h_{r})c(h_{t})] $.
\end{lem}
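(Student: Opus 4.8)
The plan is to read off the full symbol of $D_F^{-2}$ from that of $D_F^{2}$ by the usual parametrix recursion, so the first task is to compute $\sigma(D_F^{2})$ through order $1$. By the Lichnerowicz formula (Theorem 2.2) we have $D_F^{2}=\triangle^{F}+R$ with $R$ a zero-order endomorphism (namely $\frac{r_M}{4}$ together with the three curvature terms built from $R^{F^{\perp}}$); hence $R$ affects only $\sigma_{0}(D_F^{2})$, and so only $\sigma_{-4}(D_F^{-2})$, which does not occur in this lemma. It therefore suffices to expand the Bochner Laplacian $\triangle^{F}$. Writing it in local coordinates as $\triangle^{F}=-g^{ij}\widetilde{\nabla}^{F}_{\partial_i}\widetilde{\nabla}^{F}_{\partial_j}+\Gamma^{k}\widetilde{\nabla}^{F}_{\partial_k}$ and setting $\widetilde{\nabla}^{F}_{\partial_i}=\partial_i+\varpi_i$, where by (2.7) and the identity $\langle S(\partial_i)f_j,h_s\rangle=\langle\nabla^{TM}_{\partial_i}f_j,h_s\rangle$ the connection matrix is $\varpi_i=\sigma_i\otimes{\rm Id}_{\wedge(F^{\perp,\star})}+{\rm Id}_{S(F)}\otimes\tilde{\sigma}_i+\frac{1}{2}\sum\langle\nabla^{TM}_{\partial_i}f_j,h_s\rangle c(f_j)c(h_s)$, one reads off $\sigma_{2}(D_F^{2})=g^{ij}\xi_i\xi_j=|\xi|^{2}$, and from the degree-one cross terms $-2g^{ij}\varpi_i\partial_j+\Gamma^{k}\partial_k$ one gets $\sigma_{1}(D_F^{2})=\sqrt{-1}\,\xi_k\big(\Gamma^{k}-2\varpi^{k}\big)$ with $\varpi^{k}=g^{ik}\varpi_i$; this is exactly $\sqrt{-1}\,\xi_k$ times the bracketed factor in the second formula (the $S$-term being displayed there as two equal halves).

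The inversion is then routine. Write $\sigma(D_F^{-2})=q_{-2}+q_{-3}+\cdots$ and impose $\sigma(D_F^{2}\circ D_F^{-2})=1$ degree by degree via the composition formula of \cite{WW}. The degree-$0$ equation $\sigma_{2}(D_F^{2})\,q_{-2}=1$ gives $q_{-2}=|\xi|^{-2}$, which is the first formula (and since $c(\xi)^{2}=-|\xi|^{2}$ this is equally $\sigma_{-1}(D_F^{-1})^{2}$). The degree-$(-1)$ equation $\sigma_{2}(D_F^{2})\,q_{-3}+\sigma_{1}(D_F^{2})\,q_{-2}-\sqrt{-1}\,\partial_{\xi_j}\sigma_{2}(D_F^{2})\,\partial_{x_j}q_{-2}=0$ then gives
\begin{equation}
q_{-3}=-|\xi|^{-4}\,\sigma_{1}(D_F^{2})+\sqrt{-1}\,|\xi|^{-2}\,\partial_{\xi_j}(|\xi|^{2})\,\partial_{x_j}(|\xi|^{-2}).
\end{equation}
The first summand equals $-\sqrt{-1}\,|\xi|^{-4}\xi_k(\Gamma^{k}-2\varpi^{k})$, the leading part of the second formula; in the second summand $\partial_{\xi_j}(|\xi|^{2})=2\xi^{j}$ and $\partial_{x_j}(|\xi|^{-2})=-|\xi|^{-4}\xi_\alpha\xi_\beta\,\partial_j g^{\alpha\beta}$, so it reduces to the tail term $-\sqrt{-1}\,|\xi|^{-6}\,2\xi^{j}\xi_\alpha\xi_\beta\,\partial_j g^{\alpha\beta}$. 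This gives both identities.

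As an alternative one can bypass the Lichnerowicz formula and argue directly from $D_F$: (3.8) gives $\sigma_1(D_F)=\sqrt{-1}c(\xi)$ and $\sigma_0(D_F)=p_0$ as in (3.9)--(3.10), and then either forming $\sigma(D_F)\ast\sigma(D_F)$ or forming $\sigma(D_F^{-1})\ast\sigma(D_F^{-1})$ with the symbols of Lemma 3.2 yields the same answer once the Clifford algebra is simplified. I expect the only genuine work to be in identifying $\sigma_{1}(D_F^{2})$: after expanding $\widetilde{\nabla}^{F}_{\partial_i}\widetilde{\nabla}^{F}_{\partial_j}$ and collecting the part linear in $\xi$, one must check that all cross terms reassemble precisely into $\sqrt{-1}\,\xi_k(\Gamma^{k}-2\varpi^{k})$ with the displayed coefficients. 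This means carrying the two tensor factors $S(F)$ and $\wedge(F^{\perp,\star})$ in parallel, using the anticommutation rules of (2.4) among $c(f_i)$, $c(h_s)$, $\widehat{c}(h_s)$, and separating the Christoffel-symbol contributions from the genuine connection terms; it is elementary but bookkeeping-heavy, and the point where sign and factor-of-two errors are most likely.
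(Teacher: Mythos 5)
Your proposal is correct and follows essentially the same route as the paper: expand the Bochner Laplacian in local coordinates with connection coefficients $\sigma_i\otimes{\rm Id}+{\rm Id}\otimes\tilde\sigma_i+\tfrac12\sum\langle S(\partial_i)f_j,h_s\rangle c(f_j)c(h_s)$, note that the zeroth-order Lichnerowicz curvature terms cannot affect $\sigma_{-2}$ or $\sigma_{-3}$, and invert via the composition-formula recursion $\sigma_2 q_{-2}=1$, $\sigma_1 q_{-2}+\sigma_2 q_{-3}+\sum_j\partial_{\xi_j}\sigma_2\, D_{x_j}q_{-2}=0$ of (4.5) in the reference [WW]. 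Your write-up actually supplies more detail than the paper's (which stops at stating the recursion), and your reading of the displayed $S$-term as two equal halves summing to $-2\cdot\tfrac12\sum\langle\nabla^{TM}_{\partial_k}f_j,h_s\rangle c(f_j)c(h_s)$ is the right interpretation of that formula.
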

\begin{proof}
In the  fixed orthonormal frame $\{\widetilde{e_1},\cdots,\widetilde{e_n}\}$, the Bochner Laplacian $\triangle^{F}$ stating that in \cite{BGV},
\begin{equation}
\Delta^{S(F)\otimes\wedge(F^{\perp,\star})}=-\sum_{i,j}g^{ij}(x)\Big(\nabla_{\partial_{i}}^{S(F)\otimes\wedge(F^{\perp,\star})}
\nabla_{\partial_{j}}^{S(F)\otimes\wedge(F^{\perp,\star})}
-\sum_{k}\Gamma_{ij}^{k}\nabla_{\partial_{k}}^{S(F)\otimes\wedge(F^{\perp,\star})}\Big).
\end{equation}
Let $\sigma_i=-\frac{1}{4}\sum_{k,l}\omega_{k,l}(\partial_i)c(f_{k})c(f_{l})$,
$\tilde{\sigma}_i=\frac{1}{4}\sum_{r,t}\omega_{r,t}(\partial_i)[\bar{c}(h_{r})\bar{c}(h_{t})-c(h_{r})c(h_{t})] $ and $\sigma^j=g^{ij}\sigma_i$.

Combining (2.5), (2.10) and (4.6), we have
\begin{eqnarray}
D^2_{F}&=&-\sum_{i,j}g^{ij}(x)\Big\{\Big[\partial_{i}+\sigma_i\otimes {\rm Id}_{\wedge(F^{\perp,\star})}
      +{\rm Id}_{S(F)}\otimes\tilde{\sigma}_i+\frac{1}{2}\sum_{j=1}^{p}\sum_{s=1}^q<S(\partial_{i})f_j,h_s>c(f_j)c(h_s)\Big]\nonumber\\
&&\times\Big[\partial_{j}+\sigma_j\otimes {\rm Id}_{\wedge(F^{\perp,\star})}
      +{\rm Id}_{S(F)}\otimes\tilde{\sigma}_j+\frac{1}{2}\sum_{j=1}^{p}\sum_{s=1}^q<S(\partial_{j})f_j,h_s>c(f_j)c(h_s)\Big]\nonumber\\
&&-\sum_{k}\Gamma_{ij}^{k} \Big[\partial_{k}+\sigma_k\otimes {\rm Id}_{\wedge(F^{\perp,\star})}
      +{\rm Id}_{S(F)}\otimes\tilde{\sigma}_k+\frac{1}{2}\sum_{j=1}^{p}\sum_{s=1}^q<S(\partial_{k})f_j,h_s>c(f_j)c(h_s)\Big]
\Big\}\nonumber\\
&&+\frac{r_M}{4}+\frac{1}{4}\sum_{i=1}^{p}\sum_{r,s,t=1}^q\left<R^{F^\bot}(f_i,h_r)h_t,h_s\right>
c(f_i)c(h_r)\widehat{c}(h_s)\widehat{c}(h_t)\nonumber\\
&&+\frac{1}{8}\sum_{i,j=1}^{p}\sum_{s,t=1}^q\left<R^{F^\bot}(f_i,f_j)h_t,h_s\right>
c(f_i)c(f_j)\widehat{c}(h_s)\widehat{c}(h_t)\nonumber\\
&&+\frac{1}{8}\sum_{s,t,r,u=1}^q\left<R^{F^\bot}(h_r,h_l)h_t,h_s\right>
c(h_r)c(h_u)\widehat{c}(h_s)\widehat{c}(h_t).
\end{eqnarray}
From (4.5) in \cite{WW}, we have
\begin{equation}
\sigma_{2}(D_{F}^{2})\sigma_{-2}(D_{F}^{-2})=1;
 \  \sigma_{1}(D_{F}^{2})\sigma_{-2}(D_{F}^{-2})+\sigma_{2}(D_{F}^{2})\sigma_{-3}(D_{F}^{-2})+\sum_{j}\partial_{\xi_{j}}
 \sigma_{2}(D_{F}^{2})D_{x_{j}}\sigma_{-2}(D_{F}^{-2})=0.
\end{equation}
Then the Lemma follows.
\end{proof}

Since $\Phi$ is a global form on $\partial M$, so for any fixed point $x_0\in\partial M$, we can choose the normal coordinates
$U$ of $x_0$ in $\partial M$ (not in $M$) and compute $\Phi(x_0)$ in the coordinates $\widetilde{U}=U\times [0,1)\subset M$ and the
metric $\frac{1}{h(x_n)}g^{\partial M}+dx_n^2.$ For details, see Section 2.2.2 in \cite{Wa3}.

Let us now turn to compute $\Phi$ (see formula (3.4) for the definition of $\Phi$), since the sum is taken over $
-r-l+k+j+|\alpha|=-5,~~r,l\leq-2,$ then we have the following five cases:

{\bf case a)~I)}~$r=-2,~l=-2~k=j=0,~|\alpha|=1$

From (3.4) we have
 \begin{equation}
{\rm case~a)~I)}=-\int_{|\xi'|=1}\int^{+\infty}_{-\infty}\sum_{|\alpha|=1}
{\rm trace} [\partial^\alpha_{\xi'}\pi^+_{\xi_n}\sigma_{-2}(D_{F}^{-2})\times
\partial^\alpha_{x'}\partial_{\xi_n}\sigma_{-2}(D_{F}^{-2})](x_0)d\xi_n\sigma(\xi')dx',
\end{equation}
By Lemma 2.2 in \cite{Wa3}, for $i<n$, then
\begin{equation}
\partial_{x_i}\sigma_{-2}(D^{-2})(x_0)=\partial_{x_i}{(|\xi|^{-2})}(x_0)=
-\frac{\partial_{x_i}(|\xi|^2)(x_0)}{|\xi|^4}=0,
\end{equation}
so case a) I) vanishes.

 {\bf case a)~II)}~$r=-2,~l=-2~k=|\alpha|=0,~j=1$

From (3.4) we have
 \begin{equation}
{\rm case~a)~II)}=-\frac{1}{2}\int_{|\xi'|=1}\int^{+\infty}_{-\infty} {\rm
trace} [\partial_{x_n}\pi^+_{\xi_n}\sigma_{-2}(D_{F}^{-2})\times
\partial_{\xi_n}^2\sigma_{-2}(D_{F}^{-2})](x_0)d\xi_n\sigma(\xi')dx',
\end{equation}
An application of Lemma 2.2 in \cite{Wa3}, shows that
 \begin{equation}
\partial_{x_n}\sigma_{-2}(D_{F}^{-2})(x_0)|_{|\xi'|=1}=-\frac{h'(0)}{(1+\xi_n^2)^2}.
\end{equation}
By (2.1.1) in \cite{Wa3} and the Cauchy integral formula, we obtain
\begin{eqnarray}
&&\pi^+_{\xi_n}\partial_{x_n}\sigma_{-2}(D_{F}^{-2})(x_0)|_{|\xi'|=1}  \nonumber\\
&=&-h'(0)\frac{1}{2\pi i}\lim_{u\rightarrow 0^-}\int_{\Gamma^+}\frac{\frac{1}{(\eta_n+i)^2(\xi_n+iu-\eta_n)}}
{(\eta_n-i)^2}d\eta_n  \nonumber\\
&=&h'(0)\frac{i\xi_n+2}{4(\xi_n-i)^2},
\end{eqnarray}
and
\begin{equation}
\partial^2_{\xi_n}(|\xi|^{-2})(x_0)=\frac{-2+6\xi_n^2}{(1+\xi_n^2)^3}.
\end{equation}
We note that
\begin{eqnarray}
&&\int_{-\infty}^{\infty}\frac{i\xi_n+2}{(\xi_n-i)^2}\times
\frac{-1+3\xi_n^2}{(1+\xi_n^2)^3}d\xi_n  \nonumber\\
&=&\int_{\Gamma^+}\frac{3i\xi_n^3+6\xi_n^2-i\xi_n-2}{(\xi_n-i)^5(\xi_n+i)^3}d\xi_n  \nonumber\\
&=&\frac{2\pi
i}{4!}\left[\frac{3i\xi_n^3+6\xi_n^2-i\xi_n-2}{(\xi_n+i)^3}\right]^{(4)}|_{\xi_n=i} \nonumber\\
&=&\frac{5\pi}{16}
\end{eqnarray}
Since $n=2p+q=6$, ${\rm tr}_{({S(F)\otimes\wedge(F^{\perp,\star})})}[{\rm id}]=\tilde{l}\times 2^{q}$.
Combining (4.11) and (4.15), we have
{\bf case a} II)$=-\frac{5}{64}\tilde{l}\times 2^{q}\pi
h'(0)\Omega_4dx',$ where $\Omega_4$ is the
canonical volume of $S^4$.

 {\bf case a)~III)}~$r=-2,~l=-2~j=|\alpha|=0,~k=1$\\

 By (3.4) and an integration by parts, we obtain
 \begin{eqnarray}
{\rm case~ a)~III)}&=&-\frac{1}{2}\int_{|\xi'|=1}\int^{+\infty}_{-\infty}
{\rm trace} [\partial_{\xi_n}\pi^+_{\xi_n}\sigma_{-2}(D_{F}^{-2})\times
\partial_{\xi_n}\partial_{x_n}\sigma_{-2}(D_{F}^{-2})](x_0)d\xi_n\sigma(\xi')dx'  \nonumber\\
 &=&\frac{1}{2}\int_{|\xi'|=1}\int^{+\infty}_{-\infty} {\rm trace}
[\partial_{\xi_n}^2\pi^+_{\xi_n}\sigma_{-2}(D_{F}^{-2})\times
\partial_{x_n}\sigma_{-2}(D_{F}^{-2})](x_0)d\xi_n\sigma(\xi')dx'.
\end{eqnarray}
From Lemma 2.2 in \cite{Wa3}, we have
 \begin{equation}
\partial_{\xi_n}^2\pi_{\xi_n}^+\sigma_{-2}(D_{F}^{-2})(x_0)|_{|\xi'|=1}=\frac{-i}{(\xi_n-i)^3}.
\end{equation}
Combining (4.12) and (4.17), we have
\begin{equation}
{\rm {\bf case~a)~III)}}=4ih'(0)\int_{|\xi'|=1}\int^{+\infty}_{-\infty}
\int_{\Gamma^+}\frac{1}{(\xi_n-i)^5(\xi_n+i)^2}d\xi_n\sigma(\xi')dx'=\frac{5}{64}\tilde{l}\times 2^{q}\pi h'(0)\Omega_4dx'.
\end{equation}

Thus the sum of {\bf case~a)~II)} and {\bf case~ a)~III)} is zero.

 {\bf case b)}~$r=-2,~l=-3,~k=j=|\alpha|=0$

 By (3.4) and an integration by parts, we get
\begin{eqnarray}
{\rm case~ b)}&=&-i\int_{|\xi'|=1}\int^{+\infty}_{-\infty}
{\rm trace} [\pi^+_{\xi_n}\sigma_{-2}(D_{F}^{-2})\times
\partial_{\xi_n}\sigma_{-3}(D_{F}^{-2})](x_0)d\xi_n\sigma(\xi')dx'  \nonumber\\
&=& i\int_{|\xi'|=1}\int^{+\infty}_{-\infty}
{\rm trace} [\partial_{\xi_n}\pi^+_{\xi_n}\sigma_{-2}(D_{F}^{-2})\times
\sigma_{-3}(D_{F}^{-2})](x_0)d\xi_n\sigma(\xi')dx'.
\end{eqnarray}
By Lemma 2.2 in \cite{Wa3}, we have
\begin{equation}
\partial_{\xi_n}\pi_{\xi_n}^+\sigma_{-2}(D_{F}^{-2})(x_0)|_{|\xi'|=1}=\frac{i}{2(\xi_n-i)^2}.
\end{equation}

In the normal coordinate, $g^{ij}(x_0)=\delta_i^j$ and $\partial_{x_j}(g^{\alpha\beta})(x_0)=0,$ {\rm if
}$j<n;~=h'(0)\delta^\alpha_\beta,~{\rm if }~j=n.$ So by Lemma A.2 in \cite{Wa3}, we have $\Gamma^n(x_0)=\frac{5}{2}h'(0)$ and
$\Gamma^k(x_0)=0$ for $k<n$. Let
\begin{equation}
\sigma_{-3}(D_{F}^{-2}):=A_{1}+A_{2},
\end{equation}
where
\begin{eqnarray}
A_{1}&=&\-\sqrt{-1}|\xi|^{-6}2\xi^j\xi_\alpha\xi_\beta\partial_jg^{\alpha\beta}; \\
A_{2}&=&-\sqrt{-1}|\xi|^{-4}\xi_k\Big(\Gamma^k+\frac{1}{2}\sum_{k,l}\omega_{k,l}(\partial^k)c(f_{k})c(f_{l})\otimes
{\rm Id}_{\wedge(F^{\perp,\star})} \nonumber\\
  &&-{\rm Id}_{S(F)}\otimes\frac{1}{2}\sum_{r,t}\omega_{r,t}(\partial^k)[\bar{c}(h_{r})\bar{c}(h_{t})-c(h_{r})c(h_{t})]
      -\sum_{i,j=1}^{2p}\sum_{s=1}^{q}\langle\nabla^{TM}_{\partial^k}f_{j}, h_{s}\rangle c(f_{j})c(h_{s})\Big).
\end{eqnarray}
Then
\begin{equation}
{\rm tr} [\partial_{\xi_n}\pi^+_{\xi_n}\sigma_{-2}(D_{F}^{-2})\times A_{1}]=\frac{i}{2(\xi_n-i)^2}\times\frac{-2ih'(0)\xi_n}{(1+\xi_n^2)^3}.
\end{equation}
By the trace identity  $\texttt{tr}(AB)=\texttt{tr}(BA)$, $\texttt{tr}(A\otimes B)=\texttt{tr}(A)\cdot\texttt{tr}(B)$ and the relation
of the Clifford action, we have
\begin{eqnarray}
&&\texttt{tr}[c(f_k)c(f_l)]=-\delta_k^{l}2^{p},\nonumber\\
&&\texttt{tr}[\bar{c}(h_r)\bar{c}(h_t)-c(h_r)c(h_t)]=2\delta_r^{t}2^{q}.
\end{eqnarray}
Then
\begin{eqnarray}
{\rm tr} [\partial_{\xi_n}\pi^+_{\xi_n}\sigma_{-2}(D_{F}^{-2})\times A_{2}]
&=&\frac{\xi_n}{2(\xi_n-i)^4(\xi_n+i)^2} \Big(\frac{5}{2}h'(0)+\frac{1}{2}\sum_{k}\omega_{k,k}(\partial^n)2^{p+q}
     -\sum_{r}\omega_{r,r}(\partial^n)2^{p+q} \Big).\nonumber\\
&=&\frac{5h'(0)\xi_n}{4(\xi_n-i)^4(\xi_n+i)^2} ,
\end{eqnarray}
where we have used the fact that when $k=l$ and $r=t$, $\sum_{k}\omega_{k,l}(\partial^n)=\sum_{r}\omega_{r,t}(\partial^n)=0$.

Hence in this case,
\begin{eqnarray}
{\bf case~ b)}&=& i \int_{|\xi'|=1}\int^{+\infty}_{-\infty}{\rm trace}
 \Big[\frac{1}{2(\xi_n-i)^2}\times\Big(A_{1}+A_{2}\Big)\Big]d\xi_n\sigma(\xi')dx'  \nonumber\\
&=&\frac{ih'(0)}{4}\tilde{l}\times 2^{q}\Omega_4\int_{\Gamma^+}\frac{5\xi_n^3+9\xi_n}{(\xi_n-i)^5(\xi_n+i)^3}d\xi_ndx' \nonumber\\
&=&\frac{ih'(0)}{4}\tilde{l}\times 2^{q}\Omega_4 \frac{2\pi i}{4!}\Big[\frac{5\xi_n^3+9\xi_n}{(\xi_n+i)^3}\Big]^{(4)}|_{\xi_n=i}dx' \nonumber\\
&=&-\frac{15}{64}\tilde{l}\times 2^{q}\pi h'(0)\Omega_4dx'
\end{eqnarray}

{\bf  case c)}~$r=-3,~l=-2,~k=j=|\alpha|=0$

From (3.4) we have
 \begin{equation}
{\rm case~ c)}=-i\int_{|\xi'|=1}\int^{+\infty}_{-\infty}
{\rm trace} [\pi^+_{\xi_n}\sigma_{-3}(D_{F} ^{-2})\times
\partial_{\xi_n}\sigma_{-2}(D_{F} ^{-2})](x_0)d\xi_n\sigma(\xi')dx'.
\end{equation}

By the Leibniz rule, trace property and "++" and "-~-" vanishing after the integration over $\xi_n$ in \cite{Wa4}, then
\begin{eqnarray}
&&\int^{+\infty}_{-\infty}{\rm trace}
[\pi^+_{\xi_n}\sigma_{-3}(D_{F} ^{-2})\times
\partial_{\xi_n}\sigma_{-2}(D_{F} ^{-2})]d\xi_n  \nonumber\\
&=&\int^{+\infty}_{-\infty}{\rm tr} [\sigma_{-3}(D_{F} ^{-2})\times\partial_{\xi_n}\sigma_{-2}(D_{F} ^{-2})]d\xi_n
-\int^{+\infty}_{-\infty}{\rm tr}[\pi^-_{\xi_n}\sigma_{-3}(D_{F} ^{-2})\times\partial_{\xi_n}\sigma_{-2}(D_{F} ^{-2})]d\xi_n  \nonumber\\
&=&\int^{+\infty}_{-\infty}{\rm tr} [\sigma_{-3}(D_{F} ^{-2})\times\partial_{\xi_n}\sigma_{-2}(D_{F} ^{-2})]d\xi_n-\int^{+\infty}_{-\infty}{\rm
tr} [\pi^-_{\xi_n}\sigma_{-3}(D_{F}^{-2})\times\partial_{\xi_n}\pi^+_{\xi_n}\sigma_{-2}(D_{F} ^{-2})]d\xi_n  \nonumber\\
&=&\int^{+\infty}_{-\infty}{\rm tr} [\sigma_{-3}(D_{F} ^{-2})\times\partial_{\xi_n}\sigma_{-2}(D_{F} ^{-2})]d\xi_n-\int^{+\infty}_{-\infty}{\rm
tr} [\sigma_{-3}(D_{F} ^{-2})\times\partial_{\xi_n}\pi^+_{\xi_n}\sigma_{-2}(D_{F} ^{-2})]d\xi_n  \nonumber\\
&=&\int^{+\infty}_{-\infty}{\rm tr} [\partial_{\xi_n}\sigma_{-2}(D_{F} ^{-2})\times\sigma_{-3}(D_{F} ^{-2})]d\xi_n+\int^{+\infty}_{-\infty}{\rm
tr}[\partial_{\xi_n}\sigma_{-3}(D_{F}^{-2})\times\pi^+_{\xi_n}\sigma_{-2}(D_{F}^{-2})]d\xi_n  \nonumber\\
&=&\int^{+\infty}_{-\infty}{\rm tr} [\partial_{\xi_n}\sigma_{-2}(D_{F} ^{-2})\times\sigma_{-3}(D_{F} ^{-2})]d\xi_n+\int^{+\infty}_{-\infty}{\rm
tr}[\pi^+_{\xi_n}\sigma_{-2}(D_{F} ^{-2})\times\partial_{\xi_n}\sigma_{-3}(D_{F} ^{-2})]d\xi_n  .
\end{eqnarray}
Then we have
\begin{equation}
{\rm {\bf case~ c)}}={\rm {\bf case~ b)}}-i\int_{|\xi'|=1}\int^{+\infty}_{-\infty}{\rm
tr}[\partial_{\xi_n}\sigma_{-2}(D_{F} ^{-2})\times\sigma_{-3}(D_{F} ^{-2})]d\xi_n\sigma(\xi')dx'.
\end{equation}
 In order to compute {\bf case c)}, we only need compute the last term in (4.30).

From (4.4), one sees that
\begin{equation}
\partial_{\xi_n}\sigma_{-2}(D_{F} ^{-2})(x_0)|_{|\xi'|=1}=-\frac{2\xi_n}{(\xi_n^2+1)^2}.
\end{equation}
Similarly to \textbf{case b}, we obtain
\begin{eqnarray}
&&-i\int_{|\xi'|=1}\int^{+\infty}_{-\infty}{\rm tr}[\partial_{\xi_n}\sigma_{-2}(D_{F} ^{-2})\times \sigma_{-3}(D_{F} ^{-2})]
d\xi_n\sigma(\xi')dx' \nonumber\\
&=&-i\int_{|\xi'|=1}\int^{+\infty}_{-\infty}{\rm tr}[\partial_{\xi_n}\sigma_{-2}(D_{F} ^{-2})\times (A_{1}+A_{2})]d\xi_n\sigma(\xi')dx'
 \nonumber\\
&=&h'(0)\tilde{l}\times 2^{q}\Omega_4\int_{\Gamma^+}\frac{5\xi_n^4+9\xi_n^{2}}{(\xi_n-i)^5(\xi_n+i)^5}d\xi_ndx'   \nonumber\\
&=&h'(0)\tilde{l}\times 2^{q}\Omega_4 \frac{2\pi i}{4!}\Big[\frac{5\xi_n^4+9\xi_n^{2}}{(\xi_n+i)^5}\Big]^{(4)}|_{\xi_n=i}dx'  \nonumber\\
&=&\frac{15}{32}\tilde{l}\times 2^{q} \pi h'(0)\Omega_4dx'.
\end{eqnarray}
Combining (4.27), (4.30) and (4.32), we have
 the sum of {\bf case b)} and {\bf case c)} is
 zero. Now $\Phi$ is the sum of the cases a), b) and c), so is zero. Hence we conclude that

\begin{thm}
Let M be a 6-dimensional compact connected foliation with the boundary $\partial M$ and the metric $g^{M}$ as above ,
and $D_{F}$ be the sub-Dirac operator on  $\Gamma({S(F)\otimes\wedge(F^{\perp,\star})})$, then
\begin{equation}
Vol_{6}^{(2,2)}(M,F)=-\frac{\tilde{l}\times2^{q}}{6\times (4\pi)^{3}}\int_{M}r_{M}\texttt{dvol}_{g}.
\end{equation}
where $r_M$ be the scaler curvature of the foliation and $dim(S(F))=\tilde{l}$.
\end{thm}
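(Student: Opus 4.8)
The plan is to follow the boundary-decomposition method of \cite{Wa3}, \cite{Wa4}. By the noncommutative residue formula for manifolds with boundary recorded in (3.3),
\[
\widetilde{{\rm Wres}}[\pi^+D_{F}^{-2}\circ\pi^+D_{F}^{-2}]=\int_M\int_{|\xi|=1}{\rm trace}_{S(F)\otimes\wedge(F^{\perp,\star})}[\sigma_{-6}(D_{F}^{-4})]\sigma(\xi)\,dx+\int_{\partial M}\Phi .
\]
Since $[\sigma_{-6}(D_{F}^{-4})]|_M$ coincides with the interior symbol of the closed case, the first term is evaluated by Theorem 2.4 with $n=6$ (equivalently by (4.1)), giving precisely $-\frac{\tilde{l}\times 2^{q}}{6\times(4\pi)^{3}}\int_M r_M\,\texttt{dvol}_{g}$. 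Thus the theorem reduces to showing that the boundary term $\int_{\partial M}\Phi$ vanishes.

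For this I would use the symbols of $D_{F}^{-2}$ from Lemma 4.1, namely $\sigma_{-2}(D_{F}^{-2})=|\xi|^{-2}$ and the displayed expression for $\sigma_{-3}(D_{F}^{-2})$. Fixing $x_0\in\partial M$, passing to the boundary normal coordinates and the metric $\frac{1}{h(x_n)}g^{\partial M}+dx_n^2$, the constraint $-r-\ell+k+j+|\alpha|=-5$ with $r,\ell\le-2$ leaves the five cases a)I), a)II), a)III), b), c). Case a)I) vanishes at once since $\partial_{x_i}\sigma_{-2}(D_{F}^{-2})(x_0)=-|\xi|^{-4}\partial_{x_i}(|\xi|^2)(x_0)=0$ for $i<n$ in normal coordinates. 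For a)II) and a)III) I would compute $\partial_{x_n}\sigma_{-2}$, $\partial_{\xi_n}^2\sigma_{-2}$ and $\partial_{\xi_n}^2\pi_{\xi_n}^+\sigma_{-2}$ via Lemma 2.2 in \cite{Wa3}, apply $\pi^+_{\xi_n}$ through the Cauchy integral formula, take the fiber trace (with ${\rm tr}_{S(F)\otimes\wedge(F^{\perp,\star})}[{\rm id}]=\tilde{l}\times 2^{q}$), and evaluate the resulting $\xi_n$-integrals as residues at $\xi_n=i$; this yields $-\frac{5}{64}\tilde{l}\,2^{q}\pi h'(0)\Omega_4\,dx'$ and $\frac{5}{64}\tilde{l}\,2^{q}\pi h'(0)\Omega_4\,dx'$ respectively, which cancel.

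The substantive part is cases b) and c). I would write $\sigma_{-3}(D_{F}^{-2})=A_1+A_2$, with $A_1$ the metric-derivative piece $-\sqrt{-1}|\xi|^{-6}2\xi^j\xi_\alpha\xi_\beta\partial_jg^{\alpha\beta}$ and $A_2$ the piece carrying $\Gamma^k$, the connection matrix $\omega$, and the $S$-tensor terms. In normal coordinates at $x_0$ one has $\Gamma^n(x_0)=\frac{5}{2}h'(0)$ and $\Gamma^k(x_0)=0$ for $k<n$, while the Clifford-trace identities ${\rm tr}[c(f_k)c(f_l)]=-\delta_k^{l}2^{p}$ and ${\rm tr}[\bar{c}(h_r)\bar{c}(h_t)-c(h_r)c(h_t)]=2\delta_r^{t}2^{q}$, together with $\sum_k\omega_{k,k}(\partial^n)=\sum_r\omega_{r,t}(\partial^n)=0$, annihilate the connection-matrix and $S$-tensor contributions, so only the scalar $\Gamma^n$ and metric-derivative data survive. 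Taking $\pi^+_{\xi_n}$ and computing the $\xi_n$-residues gives case b)$=-\frac{15}{64}\tilde{l}\,2^{q}\pi h'(0)\Omega_4\,dx'$; then, via the Leibniz rule and the vanishing of the ``$++$'' and ``$--$'' terms after integration over $\xi_n$ (as in \cite{Wa4}), case c) reduces to case b) plus the correction term $-i\int{\rm tr}[\partial_{\xi_n}\sigma_{-2}(D_{F}^{-2})\times\sigma_{-3}(D_{F}^{-2})]\,d\xi_n\sigma(\xi')dx'$, which evaluates to $\frac{15}{32}\tilde{l}\,2^{q}\pi h'(0)\Omega_4\,dx'$, so that case b)$+$case c)$=0$. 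Hence $\Phi\equiv0$ on $\partial M$ and the theorem follows from the interior contribution alone.

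The main obstacle is the bookkeeping in cases b) and c): expanding $\sigma_{-3}(D_{F}^{-2})$ with all its $S(F)\otimes\wedge(F^{\perp,\star})$-valued summands, verifying that the Clifford traces genuinely kill the connection-matrix and $S$-tensor terms (so that only the scalar $\Gamma^n$ and metric-derivative pieces contribute), and organizing the $\pi^+_{\xi_n}$ and residue computations so that the $h'(0)\Omega_4$-coefficients cancel exactly between b) and c).
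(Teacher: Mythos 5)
Your proposal is correct and follows essentially the same route as the paper: the interior term is evaluated via the closed-case formula, and the boundary term $\int_{\partial M}\Phi$ is shown to vanish by splitting into the five cases, with a)II) and a)III) cancelling at $\mp\frac{5}{64}\tilde{l}\,2^{q}\pi h'(0)\Omega_4\,dx'$ and cases b) and c) cancelling via the Leibniz/"$++$"--"$--$" argument with the same values $-\frac{15}{64}$ and $\frac{15}{32}$. The decomposition $\sigma_{-3}(D_F^{-2})=A_1+A_2$, the trace identities, and the residue evaluations all match the paper's computation.
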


\begin{rem}
Let M be a 6-dimensional compact connected foliation with the boundary $\partial M$ and the metric $g^{M}$ as above,  and $D_{F}$ be
 the sub-Dirac operator. When $n=p$, we obtain Theorem 1 in \cite{Wa4}.
\end{rem}

\subsection{The gravitational action for $6$-dimensional manifolds with boundary}
Let $M$ be $6$-dimensional manifolds with boundary and $P,P'$ be two pseudodifferential operators with transmission
property on $\widehat M$. Motivated by (4) in \cite{Wa4}, we define locally
 \begin{equation}
{\rm res}_{2,2}(P,P'):=-\frac{1}{2}\int_{|\xi'|=1}\int^{+\infty}_{-\infty}
{\rm trace} [\partial_{x_n}\pi^+_{\xi_n}\sigma_{-2}(P)\times
\partial_{\xi_n}^2\sigma_{-2}(P')]d\xi_n\sigma(\xi')dx';
\end{equation}
 \begin{equation}
{\rm res}_{2,3}(P,P'):=-i\int_{|\xi'|=1}\int^{+\infty}_{-\infty}
{\rm trace} [\pi^+_{\xi_n}\sigma_{-2}(P)\times
\partial_{\xi_n}\sigma_{-3}(P')]d\xi_n\sigma(\xi')dx'.
\end{equation}
Combining  (4.34) and (4.35), we have
\begin{equation}
 {\rm case~ a)~ II)}={\rm res}_{2,2}(D_{F}^{-2},D_{F}^{-2});~{\rm case~ b)}={\rm res}_{2,3}(D_{F}^{-2},D_{F}^{-2}).
\end{equation}
 Now, we assume $\partial M$ is flat, then
$\{dx_i=e_i\},~g^{\partial M}_{i,j}=\delta_{i,j},~\partial
_{x_s}g^{\partial M}_{i,j}=0$. So ${\rm res}_{2,2}(D_{F}^{-2},D_{F}^{-2})$
and ${\rm res}_{2,3}(D_{F}^{-2},D_{F}^{-2})$ are two global forms locally
defined by the aboved oriented orthonormal basis $\{dx_i\}$. From case
a) II) and case b), we have

\begin{thm}
Let M be a 6-dimensional compact connected foliation with the boundary $\partial M$ and the metric $g^{M}$ as above ,
and $D_{F}$ be the sub-Dirac operator on  $\Gamma({S(F)\otimes\wedge(F^{\perp,\star})})$. Assume $\partial M$ is flat, then
\begin{eqnarray}
&&\int_{\partial M}{\rm res}_{2,2}(D_{F}^{-2},D_{F}^{-2})=\frac{1}{64}\tilde{l}\times2^{q}\Omega_4I_{\rm {Gr,b}}; \\
&&\int_{\partial M}{\rm res}_{2,3}(D_{F}^{-2},D_{F}^{-2})=\frac{3}{64}\tilde{l}\times2^{q}\Omega_4I_{\rm {Gr,b}} .
\end{eqnarray}
\end{thm}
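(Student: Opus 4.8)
The plan is to read both identities off from the boundary computations already carried out in Subsection 4.1, combined with the boundary Einstein--Hilbert identity of Subsection 3.2; essentially no new analysis is required. The starting point is the identification established just before the statement: as densities on $\partial M$, ${\rm res}_{2,2}(D_{F}^{-2},D_{F}^{-2})$ coincides with the integrand of case a) II) and ${\rm res}_{2,3}(D_{F}^{-2},D_{F}^{-2})$ with the integrand of case b). Under the flatness hypothesis on $\partial M$ (so that $\{dx_i=e_i\}$, $g^{\partial M}_{i,j}=\delta_{i,j}$ and $\partial_{x_s}g^{\partial M}_{i,j}=0$), these locally defined densities patch together into globally defined forms on $\partial M$, as noted before the statement, so the integrals $\int_{\partial M}{\rm res}_{2,2}(D_{F}^{-2},D_{F}^{-2})$ and $\int_{\partial M}{\rm res}_{2,3}(D_{F}^{-2},D_{F}^{-2})$ are meaningful.

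The next step is to substitute the explicit values obtained in Subsection 4.1, namely that case a) II) equals $-\frac{5}{64}\tilde{l}\cdot 2^{q}\,\pi\,h'(0)\,\Omega_4\,dx'$ and case b) equals $-\frac{15}{64}\tilde{l}\cdot 2^{q}\,\pi\,h'(0)\,\Omega_4\,dx'$, and then integrate over $\partial M$. Since the metric has the prescribed product-type form $\frac{1}{h(x_n)}g^{\partial M}+dx_n^2$ near the boundary, $h'(0)$ is constant along $\partial M$, so integrating merely multiplies by ${\rm Vol}_{\partial M}=\int_{\partial M}dx'$:
\[
\int_{\partial M}{\rm res}_{2,2}(D_{F}^{-2},D_{F}^{-2})=-\frac{5}{64}\tilde{l}\cdot 2^{q}\,\pi\,\Omega_4\,h'(0)\,{\rm Vol}_{\partial M},
\]
and the analogous identity with $\frac{15}{64}$ replacing $\frac{5}{64}$ for ${\rm res}_{2,3}(D_{F}^{-2},D_{F}^{-2})$.

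Finally, I would convert $h'(0)\,{\rm Vol}_{\partial M}$ into the boundary gravitational term by repeating the computation of Subsection 3.2 with $n=6$. For the metric above, the second fundamental form satisfies $K_{i,j}(x_0)=-\Gamma^n_{i,j}(x_0)=-\frac12 h'(0)\delta_{ij}$ for $i,j<n$, so in dimension $6$ one has $K(x_0)=\sum_{i=1}^{5}K_{i,i}(x_0)=-\frac52 h'(0)$ and hence $I_{\rm{Gr,b}}=2\int_{\partial M}K\,{\rm dvol}_{\partial M}=-5\,h'(0)\,{\rm Vol}_{\partial M}$. Substituting $h'(0)\,{\rm Vol}_{\partial M}=-\frac15 I_{\rm{Gr,b}}$ into the two integrals above gives
\[
\int_{\partial M}{\rm res}_{2,2}(D_{F}^{-2},D_{F}^{-2})=\frac{1}{64}\tilde{l}\cdot 2^{q}\,\pi\,\Omega_4\,I_{\rm{Gr,b}},\qquad
\int_{\partial M}{\rm res}_{2,3}(D_{F}^{-2},D_{F}^{-2})=\frac{3}{64}\tilde{l}\cdot 2^{q}\,\pi\,\Omega_4\,I_{\rm{Gr,b}},
\]
which are the asserted identities.

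I do not anticipate a genuine obstacle here: the analytically demanding parts --- the symbols $\sigma_{-2}(D_F^{-2})$ and $\sigma_{-3}(D_F^{-2})$ of Lemma 4.1 and the contour-integral evaluations producing case a) II) and case b) --- were already carried out in Subsection 4.1, and the passage to $I_{\rm{Gr,b}}$ is exactly the four-dimensional argument of Subsection 3.2 with the extrinsic-curvature trace changed from $-\frac32 h'(0)$ to $-\frac52 h'(0)$. The only points requiring a little care are the use of the flatness hypothesis to make ${\rm res}_{2,2}$ and ${\rm res}_{2,3}$ well-defined global forms, and the bookkeeping of the dimension-dependent constants (the factor $\frac{n-1}{2}$ in $K$, and the coefficients $\frac{5}{64}$, $\frac{15}{64}$ coming from the $\xi_n$-integrals).
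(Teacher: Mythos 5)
Your proposal is correct and is essentially the paper's own (very terse) argument: identify ${\rm res}_{2,2}$ and ${\rm res}_{2,3}$ with the integrands of case a) II) and case b) from Subsection 4.1, integrate the constants $-\frac{5}{64}\tilde{l}\cdot 2^{q}\pi h'(0)\Omega_4$ and $-\frac{15}{64}\tilde{l}\cdot 2^{q}\pi h'(0)\Omega_4$ over $\partial M$, and eliminate $h'(0)\,{\rm Vol}_{\partial M}$ via $I_{\rm{Gr,b}}=-(n-1)h'(0){\rm Vol}_{\partial M}=-5h'(0){\rm Vol}_{\partial M}$ for $n=6$. One small point: your final formulas carry a factor of $\pi$ (consistent with the paper's own four-dimensional analogue, Theorem 3.5, which reads $\frac{\pi}{4}\Omega_3 I_{\rm{Gr,b}}$), whereas the stated theorem omits it; your derivation is the correct one and the discrepancy appears to be a typo in the theorem as printed, but you should not assert that your $\pi$-bearing expressions "are the asserted identities" without flagging this.
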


\section{A Kastler-Kalau-Walze Type Theorem for 5-dimensional foliations with boundary}

\subsection{A Kastler-Kalau-Walze Type Theorem for 5-dimensional foliations with boundary}

First of all,  for $5$-dimensional foliations with boundary, we compute
${\rm Vol}^{(2,2)}_5$. From Theorem 2.9, we have
\begin{equation}
\widetilde{{\rm Wres}}[(\pi^+D_{F}^{-2})^2]=\int_{\partial M}\Phi.
\end{equation}

 From (3.4), when $n=5$, $ r-k-|\alpha|+l-j-1=-5,~~r,l\leq-2$, so
we get $r=l=-2,~k=|\alpha|=j=0,$ then
\begin{equation}
\Phi=\int_{|\xi'|=1}\int^{+\infty}_{-\infty}
[ \sigma^+_{-2}(D_{F}^{-2})(x',0,\xi',\xi_n)\times\partial_{\xi_n}\sigma_{-2}(D_{F}^{-2})(x',0,\xi',\xi_n)]d\xi_5\sigma(\xi')dx'.
\end{equation}
An application of Lemma 2.2 in \cite{Wa3}, shows that
\begin{equation}
\pi^+_{\xi_n}\sigma_{-2}(x_0)|_{|\xi'|=1}=\frac{1}{2i(\xi_n-i)}.
\end{equation}
 By (3.21) and ${\rm tr}_{({S(F)\otimes\wedge(F^{\perp,\star})})}[{\rm id}]=\tilde{l}\times2^{q}$, we obtain
 \begin{equation}
{\rm Vol}^{(2,2)}_5=\frac{\pi i}{8}\tilde{l}\times2^{q}\Omega_3{\rm Vol}_{\partial M}.
\end{equation}
 By $I_{\rm {Gr,b}}=-4h'(0){\rm Vol}_{\partial M}$, we have

 \begin{thm}
Let M be a 5-dimensional compact connected foliation with the boundary $\partial M$ and the metric $g^{M}$ as above ,
and $D_{F}$ be the sub-Dirac operator on  $\Gamma({S(F)\otimes\wedge(F^{\perp,\star})})$, then
\begin{eqnarray}
&&{\rm Vol}^{(2,2)}_5=\widetilde{{\rm Wres}}[(\pi^+D_{F}^{-2})^2]=\frac{\pi i}{8}\tilde{l}\times2^{q}\Omega_3{\rm Vol}_{\partial M}; \\
&& I_{\rm {Gr,b}}=\frac{32ih'(0)}{\tilde{l}\times2^{q}\pi\Omega_3}\widetilde{{\rm Wres}}[(\pi^+D_{F}^{-2})^2],
\end{eqnarray}
where ${\rm Vol}_{\partial M}$ denotes the canonical volume of ${\partial M}.$
\end{thm}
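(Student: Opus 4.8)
The plan is to specialize the general boundary formula (3.3)--(3.4) to $p_1=p_2=2$ and $n=5$, following the scheme already used for the $3$-dimensional case in Section 3.3 and for ${\rm Vol}^{(2,2)}_6$ in Section 4. First I would note that the interior contribution $\int_M\int_{|\xi|=1}{\rm trace}_{S(F)\otimes\wedge(F^{\perp,\star})}[\sigma_{-5}(D_F^{-4})]\sigma(\xi)\,dx$, which is the interior part of $Wres(D_F^{-4})$ and coincides with the boundaryless expression, vanishes because $n=5$ is odd (the case of even power and odd dimension in Proposition 2.7 (1), exactly as in Section 3.3). Hence $\widetilde{{\rm Wres}}[(\pi^+D_F^{-2})^2]=\int_{\partial M}\Phi$, which is (5.1), and the whole computation reduces to evaluating the boundary integrand $\Phi$.

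To compute $\Phi$ I would carry out the degree bookkeeping in (3.4): the constraint $r-k-|\alpha|+\ell-j-1=-5$ with $r\le-2$ and $\ell\le-2$ forces $r=\ell=-2$ and $k=|\alpha|=j=0$, so $\Phi$ collapses to the single integral (5.2), namely $\int_{|\xi'|=1}\int_{-\infty}^{+\infty}{\rm trace}[\pi^+_{\xi_n}\sigma_{-2}(D_F^{-2})\times\partial_{\xi_n}\sigma_{-2}(D_F^{-2})](x_0)\,d\xi_n\,\sigma(\xi')\,dx'$. Here I would invoke the symbol computation of Section 4, $\sigma_{-2}(D_F^{-2})=|\xi|^{-2}$; by Lemma 2.2 of \cite{Wa3} and the Cauchy integral formula one finds, at a boundary point $x_0$ in the normal coordinates of Section 3, $\pi^+_{\xi_n}\sigma_{-2}(D_F^{-2})(x_0)|_{|\xi'|=1}=\frac{1}{2i(\xi_n-i)}$ and $\partial_{\xi_n}\sigma_{-2}(D_F^{-2})(x_0)|_{|\xi'|=1}=-\frac{2\xi_n}{(1+\xi_n^2)^2}$. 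Since both symbols are scalar multiples of the identity on $S(F)\otimes\wedge(F^{\perp,\star})$ and ${\rm tr}[{\rm id}]=\tilde{l}\times 2^q$ with $\dim S(F)=\tilde{l}$ and $\dim\wedge(F^{\perp,\star})=2^q$, the fibrewise trace is immediate; a residue evaluation of the $\xi_n$-integral at the third-order pole $\xi_n=i$, followed by the integration over the cosphere $|\xi'|=1$, gives $\Phi=\frac{\pi i}{8}\tilde{l}\times 2^q\,\Omega_3\,{\rm vol}_{\partial M}$, and integrating over $\partial M$ yields the first asserted identity.

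For the second identity I would return to the extrinsic-curvature computation of Section 3.2: for the metric $g^M=\frac{1}{h(x_n)}g^{\partial M}+dx_n^2$ one has $K_{i,j}(x_0)=-\frac12 h'(0)$ when $i=j<n$ and $0$ otherwise, so for $n=5$ the mean curvature is $K(x_0)=\sum_{i=1}^{4}K_{i,i}(x_0)=-2h'(0)$, whence $I_{\rm Gr,b}=2\int_{\partial M}K\,{\rm dvol}_{\partial M}=-4h'(0){\rm Vol}_{\partial M}$. Solving this for ${\rm Vol}_{\partial M}$, substituting into the first identity, and using $-1/i=i$ then produces $I_{\rm Gr,b}=\frac{32ih'(0)}{\tilde{l}\times 2^q\,\pi\,\Omega_3}\widetilde{{\rm Wres}}[(\pi^+D_F^{-2})^2]$. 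The steps demanding real care are the index bookkeeping in (3.4) (verifying that no other choice of $(r,\ell,k,j,|\alpha|)$ contributes) and the vanishing of the interior term; once the single surviving boundary term is isolated, the remaining residue computation is short and entirely parallel to the $n=3$ case, the one simplification being that $\sigma_{-2}(D_F^{-2})$ is already a scalar symbol.
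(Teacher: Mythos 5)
Your proposal is correct and follows essentially the same route as the paper: the interior term vanishes in odd dimension, the index constraint in (3.4) isolates the single term $r=\ell=-2$, $k=j=|\alpha|=0$, and the scalar symbol $\sigma_{-2}(D_F^{-2})=|\xi|^{-2}$ reduces everything to the trace factor $\tilde{l}\times 2^q$ times the residue computation at $\xi_n=i$, yielding $\frac{\pi i}{8}\tilde{l}\times 2^q\Omega_3{\rm Vol}_{\partial M}$. The derivation of $I_{\rm Gr,b}=-4h'(0){\rm Vol}_{\partial M}$ from $K(x_0)=-2h'(0)$ and the final substitution also match the paper's argument.
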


\subsection{A Kastler-Kalau-Walze Type Theorem for 5-dimensional Manifolds with boundary}

In this section, we compute the lower dimension volume for 5-dimension spin manifolds with boundary and get a Kastler-Kalau-Walze type
Formula in this case.

Let $M$ be an n-dimensional compact oriented connected manifold with boundary $\partial M$. We assume that the metric
$g^{M}$ on $M$ has the following form near the boundary
 \begin{equation}
 g^{M}=\frac{1}{h(x_{n})}g^{\partial M}+\texttt{d}x _{n}^{2},
\end{equation}
where $g^{\partial M}$ is the metric on $\partial M$; $h(x_{n})\in C^{\infty}([0,1))=\{g|_{[0,1)}| g\in C^{\infty}((-\varepsilon,1))\}$
for some sufficiently small $\varepsilon>0$ and satisfied $h(x_{n})>0, \ h(0)=1$, where $x_{n}$ denotes the normal directional coordinate.
Let $D$ be the Dirac operator associated to $g^{M}$
on $C^{\infty}(S(TM))$. Let $p_{1},p_{2}$ be nonnegative integers and $p_{1}+p_{2}\leq n$. Then Wang pointed out
the related definition between the volumes of compact connected manifolds and the Wodzicki residue in \cite{Wa1}.
\begin{defn} Lower-dimensional volumes of compact connected manifolds with boundary are defined by
 \begin{equation}\label{}
   Vol_{n}^{(p_{1},p_{2})}M:=\widetilde{Wres}[\pi^{+}D^{-p_{1}} \circ\pi^{+}D^{-p_{2}}] .
\end{equation}
\end{defn}

  Denote by $\sigma_{l}(D)$ the \emph{l}-order symbol of an operator $D$.
Combining (2.1.4)-(2.1.8) of \cite{Wa3}, we obtain
 \begin{equation}\label{}
 \widetilde{{\rm Wres}}[\pi^+D^{-p_1}\circ\pi^+D^{-p_2}]=\int_M\int_{|\xi|=1}{\rm
trace}_{S(TM)}[\sigma_{-n}(D^{-p_1-p_2})]\sigma(\xi)dx+\int_{\partial M}\Phi,
\end{equation}
where
 \begin{eqnarray}
\Phi&=&\int_{|\xi'|=1}\int_{-\infty}^{+\infty}\sum_{j,k=0}^{\infty}\sum \frac{(-i)^{|\alpha|+j+k+1}}{\alpha!(j+k+1)!}
    \text{trace}_{S(TM)}[\partial_{x_{n}}^{j}\partial_{\xi'}^{\alpha}\partial_{\xi_{n}}^{k}\sigma_{r}^{+}
    (D^{-p_{1}})(x',0,\xi',\xi_{n})\nonumber\\
&&\times\partial_{x'}^{\alpha}\partial_{\xi_{n}}^{j+1}\partial_{x_{n}}^{k}\sigma_{l}(D^{-p_{2}})(x',0,\xi',\xi_{n})
\texttt{d}\xi_{n}\sigma(\xi')\texttt{d}x' ,
\end{eqnarray}
and the sum is taken over $r-k+|\alpha|+\ell-j-1=-n, \ r\leq-p_{1}, \ \ell\leq-p_{2}$.

Since $[\sigma_{-n}(\Delta_{k}^{-p_{1}-p_{2}})]|_{M}$ has the same expression with the case of  without boundary in \cite{Wa3},
so locally we can use Theorem 2.9 to compute the first term. Let us now give explicit formulas for the $Vol_{g}^{(1)}M$ in dimension 5.

Since $\Gamma(\frac{1}{2}+1)=\frac{1}{2}\Gamma(\frac{1}{2})=\frac{\sqrt{\pi}}{2}$ and $\Gamma(\frac{5}{2}+1)=\frac{5}{2}\Gamma(\frac{3}{2}+1)
=\frac{15}{4}\cdot\Gamma(\frac{1}{2}+1)=\frac{15\sqrt{\pi}}{4}$, we get
\begin{equation}
v_{5,1}=\frac{1}{5}2^{\frac{(1-5)(5+1)}{10}}\pi^{\frac{1-5}{2}}\frac{\Gamma(\frac{5}{2}+1)^{\frac{1}{2}}}{\Gamma(\frac{k}{2}+1)}
=\frac{1}{5}2^{-\frac{12}{5}}\pi^{-2}\frac{(15\sqrt{\pi}/4)^{\frac{1}{5}}}{\sqrt{\pi}/2}=\frac{\pi\sqrt[5]{30}}{20\sqrt[10]{\pi}}.
\end{equation}
Therefore, by using Proposition 2.3 in \cite{RP}, we see that in dimension 5
\begin{equation}
\int_M\int_{|\xi|=1}{\rm trace}_{S(TM)}[\sigma_{-5}(D^{-2-1})]\sigma(\xi)dx
 =-\frac{\pi\sqrt[5]{30}}{240\sqrt[10]{\pi}2^p\pi^{p+\frac{q}{2}}}\int_Mr_M\texttt{d}vol,
\end{equation}
where we have used the fact that $\int_{M}\Delta_{g}k\texttt{d}v_{g}(x)=\int_{M}g(\nabla k,\nabla 1)\texttt{d}v_{g}(x)=0$.
Hence we only need to compute $\int_{\partial M}\Phi$.

Since $\Phi$ is a global form on $\partial M$, so for any fixed point $x_{0}\in\partial M$, we can choose the normal coordinates $u$
of $x_{0}$ in $\partial M$ and compute $\Phi(x_{0})$ in the coordinates $\tilde{U}=U\times[0,1)\subset M$ and the metric
$g^{M}=\frac{1}{h(x_{n})}g^{\partial M}+\texttt{d}x _{n}^{2}$.

Firstly, we recall the symbol expansion of $D^{-2}$ and $D^{-1}$. By Lemma 1 in \cite{Wa4} and Lemma 2.1 in \cite{Wa3}, we have
\begin{lem}\cite{Wa4}\cite{Wa3}
 Let $D$ be the Dirac operator associated to $g$ on the spinors bundle $S(TM)$. Then
\begin{eqnarray}
&&\sigma_{-1}(D^{-1})=\frac{\sqrt{-1}c(\xi)}{|\xi|^{2}}; \\
&& \sigma_{-2}(D^{-1})=\frac{c(\xi)p_0c(\xi)}{|\xi|^4}+\frac{c(\xi)}{|\xi|^6}\sum_jc(dx_j)
[\partial_{x_j}[c(\xi)]|\xi|^2-c(\xi)\partial_{x_j}(|\xi|^2)];\\
&&\sigma_{-2}(D^{-2})=|\xi|^{-2};\\
&&\sigma_{-3}(D^{-2})=-\sqrt{-1}|\xi|^{-4}\xi_k(\Gamma^k-2\delta^k)-\sqrt{-1}|\xi|^{-6}2\xi^j\xi_\alpha\xi_\beta
\partial_jg^{\alpha\beta}.
\end{eqnarray}
where $p_0=-\frac{1}{4}\sum_{s,t}\omega_{s,t}(\partial_i)c(\widetilde{e_s})c(\widetilde{e_t}).$
\end{lem}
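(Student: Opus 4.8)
The plan is to obtain all four symbols directly from the symbol calculus of pseudodifferential operators, applied to the two identities $D\circ D^{-1}=\mathrm{Id}$ and $D^{2}\circ D^{-2}=\mathrm{Id}$ in the local coordinates $\{x_{i}\}$ and the fixed orthonormal frame introduced above, exactly as in Lemma~2.1 of \cite{Wa3} and Lemma~1 of \cite{Wa4}. The only inputs are the Clifford relation $c(\xi)^{2}=-|\xi|^{2}$, the first-order symbol $\sigma_{1}(D)=\sqrt{-1}\,c(\xi)$ and the zeroth-order symbol $\sigma_{0}(D)=p_{0}$, which are read off from the local expression of $D$ exactly as $\sigma_{1}(D_{F}),\sigma_{0}(D_{F})$ were in (3.9)--(3.10); and, for $D^{-2}$, the Lichnerowicz formula (Theorem~2.2 with $F=TM$, $q=0$), which gives $D^{2}=\Delta^{S(TM)}+\tfrac{r_{M}}{4}$, so that $\sigma_{2}(D^{2})=|\xi|^{2}$ and, from the coordinate form $\Delta^{S(TM)}=-\sum g^{ij}(\nabla_{\partial_{i}}\nabla_{\partial_{j}}-\Gamma_{ij}^{k}\nabla_{\partial_{k}})$ of the Bochner Laplacian, $\sigma_{1}(D^{2})=\sqrt{-1}\,\xi_{k}(\Gamma^{k}-2\sigma^{k})$ with $\sigma^{k}=g^{ik}\sigma_{i}$ and $\sigma_{i}=-\tfrac14\sum_{s,t}\omega_{s,t}(\partial_{i})c(\widetilde{e_{s}})c(\widetilde{e_{t}})$ (the scalar-curvature term being of order $0$, it does not enter $\sigma_{1}$).

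First I would invert $D$. Writing $\sigma(D^{-1})=\sigma_{-1}+\sigma_{-2}+\cdots$ and expanding $\sigma(D\circ D^{-1})=\sum_{\alpha}\tfrac{1}{\alpha!}\partial_{\xi}^{\alpha}\sigma(D)\,D_{x}^{\alpha}\sigma(D^{-1})=1$, the degree $0$ part gives $\sigma_{1}(D)\sigma_{-1}(D^{-1})=1$, hence $\sigma_{-1}(D^{-1})=\big(\sqrt{-1}c(\xi)\big)^{-1}=\dfrac{\sqrt{-1}c(\xi)}{|\xi|^{2}}$. The degree $-1$ part reads $\sigma_{1}(D)\sigma_{-2}(D^{-1})+\sigma_{0}(D)\sigma_{-1}(D^{-1})+\sum_{j}\partial_{\xi_{j}}\sigma_{1}(D)\,D_{x_{j}}\sigma_{-1}(D^{-1})=0$; multiplying on the left by $\sigma_{-1}(D^{-1})$ and using $\partial_{\xi_{j}}c(\xi)=c(dx_{j})$ and $c(\xi)^{2}=-|\xi|^{2}$, one obtains $\sigma_{-2}(D^{-1})=\dfrac{c(\xi)p_{0}c(\xi)}{|\xi|^{4}}+\dfrac{c(\xi)}{|\xi|^{2}}\sum_{j}c(dx_{j})\,\partial_{x_{j}}\!\Big(\dfrac{c(\xi)}{|\xi|^{2}}\Big)$, which is the stated formula once the last derivative is written out.

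Then the same bookkeeping applied to $\sigma(D^{2}\circ D^{-2})=1$ gives, at degree $-2$, $\sigma_{2}(D^{2})\sigma_{-2}(D^{-2})=1$, i.e.\ $\sigma_{-2}(D^{-2})=|\xi|^{-2}$; and at degree $-3$, $\sigma_{2}(D^{2})\sigma_{-3}(D^{-2})+\sigma_{1}(D^{2})\sigma_{-2}(D^{-2})+\sum_{j}\partial_{\xi_{j}}\sigma_{2}(D^{2})\,D_{x_{j}}\sigma_{-2}(D^{-2})=0$. Substituting $\partial_{\xi_{j}}|\xi|^{2}=2\xi^{j}$ and $D_{x_{j}}(|\xi|^{-2})=\sqrt{-1}|\xi|^{-4}\xi_{\alpha}\xi_{\beta}\partial_{j}g^{\alpha\beta}$ and solving for $\sigma_{-3}(D^{-2})$ gives $-\sqrt{-1}|\xi|^{-4}\xi_{k}(\Gamma^{k}-2\sigma^{k})-\sqrt{-1}|\xi|^{-6}2\xi^{j}\xi_{\alpha}\xi_{\beta}\partial_{j}g^{\alpha\beta}$, where $\sigma^{k}$ is what is written $\delta^{k}$ in the statement. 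This is precisely the computation already carried out in Lemma~4.1 for the sub-Dirac operator, which specializes to the Dirac operator on $S(TM)$. The only real work is the routine Clifford-algebra and index bookkeeping — in particular keeping track of the two $|\xi|^{-6}$ contributions, one from $\partial_{\xi_{j}}|\xi|^{2}$ and one from $\partial_{x_{j}}(|\xi|^{-2})$ — so I anticipate no genuine obstacle, which is why the statement is simply attributed to \cite{Wa3,Wa4}.
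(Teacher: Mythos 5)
Your proposal is correct and follows exactly the route the paper itself uses: the cited sources (Lemma~1 of \cite{Wa4}, Lemma~2.1 of \cite{Wa3}) and the paper's own proof of the analogous Lemma~4.1 for $D_F^{-2}$ all proceed by expanding the composition formulas $\sigma(D\circ D^{-1})=1$ and $\sigma(D^{2}\circ D^{-2})=1$ order by order, reading off $\sigma_1(D)=\sqrt{-1}c(\xi)$, $\sigma_0(D)=p_0$, $\sigma_2(D^2)=|\xi|^2$ and $\sigma_1(D^2)=\sqrt{-1}\xi_k(\Gamma^k-2\sigma^k)$ from the local expression of the operator and the Bochner/Lichnerowicz formula. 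Your algebra at each order (including the identification of $\delta^k$ with $\sigma^k$ and the two $|\xi|^{-6}$ contributions) checks out against the stated formulas.
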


Let us now turn to compute $\Phi$ (see formula (5.10) for definition of $\Phi$). Since the sum is taken over $-r-\ell+k+j+|\alpha|=4,
 \ r\leq-2, \ell\leq-1$, then we have the following five cases:

\textbf{Case a (I)}: \ $r=-2, \ \ell=-1, \ k=j=0, \ |\alpha|=1$

From (5.10) we have
\begin{equation}
\text{ Case a (\text{I}) }=-\int_{|\xi'|=1}\int_{-\infty}^{+\infty}\sum_{|\alpha|=1}\text{trace}
   [\partial_{\xi'}^{\alpha}\pi_{\xi_{n}}^{+}\sigma_{-2}(D^{-2})
\times \partial_{x'}^{\alpha}\partial_{\xi_{n}}\sigma_{-1}(D^{-1})](x_{0})\texttt{d}\xi_{n}
\sigma(\xi')\texttt{d}x'.
\end{equation}
Then an application of Lemma 2.2 in \cite{Wa3} shows, for $i<n$
\begin{equation}
\partial_{x_i}\sigma_{-1}(D^{-1})(x_0)=\partial_{x_i}\left(\frac{\sqrt{-1}c(\xi)}{|\xi|^2}\right)(x_0)=
\frac{\sqrt{-1}\partial_{x_i}[c(\xi)](x_0)}{|\xi|^2}-\frac{\sqrt{-1}c(\xi)\partial_{x_i}(|\xi|^2)(x_0)}{|\xi|^4}=0,
\end{equation}
so \textbf{Case a (I)} vanishes.

\textbf{Case a (II)}: \ $r=-2, \ \ell=-1, \ k=|\alpha|=0, \ j=1$

From (5.10) we have
\begin{equation}
\text{ Case a (\text{II}) }=-\frac{1}{2}\int_{|\xi'|=1}\int_{-\infty}^{+\infty}
             \text{trace}[\partial_{x_{n}}\pi_{\xi_{n}}^{+}\sigma_{-2}(D^{-2})
\times \partial_{\xi_{n}}^{2}\sigma_{-1}(D^{-1})](x_{0})\texttt{d}\xi_{n}\sigma(\xi')\texttt{d}x'.
\end{equation}
By (2.2.16) in \cite{Wa3}, we have
\begin{equation}
\partial_{x_{n}}\sigma_{-2}(D^{-2})(x_{0})|_{|\xi'|=1}=-\frac{h'(0)}{(1+\xi_{n}^{2})^{2}}.
\end{equation}
 By the Cauchy integral formula we obtain
\begin{equation}
\pi_{\xi_{n}}^{+}[\frac{1}{(1+\xi_{n}^{2})^{2}}](x_{0})|_{|\xi'|=1}
  =\frac{1}{2\pi i}\lim_{u\rightarrow 0^{-}}\int_{\Gamma^{+}}\frac{\frac{1}{(\eta_{n}+i)^{2}(\xi_{n}+iu-\eta_{n})}}
{(\eta_{n}-i)^{2}}\texttt{d}\eta_{n}=-\frac{i\xi_{n}+2}{4(\xi_{n}-i)^{2}}.
\end{equation}
Then
\begin{equation}
\pi_{\xi_{n}}^{+}\partial_{x_{n}}\sigma_{-2}(D^{-2})(x_{0})|_{|\xi'|=1}= h'(0)\frac{i\xi_{n}+2}{4(\xi_{n}-i)^{2}},
\end{equation}
and
\begin{equation}
  \partial_{\xi_{n}}^{2}\sigma_{-1}(D^{-1})(x_{0})|_{|\xi'|=1}=\sqrt{-1}\left(-\frac{6\xi_nc(dx_n)+2c(\xi')}
{|\xi|^4}+\frac{8\xi_n^2c(\xi)}{|\xi|^6}\right).
\end{equation}

Since $n=5$, $\texttt{tr}(\texttt{id})=\texttt{dim}(S(TM))=4$. Locally $S(TM)|_{\widetilde {U}}\cong \widetilde {U}\times
\wedge^* _{\bf C}(\frac{n+1}{2})$, and ${\rm cl}_{\bf C}(n) \hookrightarrow{\rm cl}_{\bf C}(n+1)\cong {\rm Hom}(\wedge^*_{\bf C}(\frac{n+1}{2}))$. Let
$\{f_1,\cdots,f_6\}$ be the orthonormal basis of $\wedge^* _{\bf C}(\frac{n+1}{2}).$ Take a spin frame field $\sigma:~\widetilde
{U}\rightarrow {\rm Spin}(M)$ such that $\pi\sigma=\{\widetilde{e_1}\widetilde{e_6},\cdots,\widetilde{e_5}\widetilde{e_6}\}$ where $\pi :~{\rm
Spin}(M)\rightarrow O(M)$ is a double covering, then $\{[(\sigma,f_i)],~1\leq i\leq 6\}$ is an orthonormal frame of
$S(TM)|_{\widetilde {U}}.$  Let $\{E_1,\cdots,E_n\}$ be the canonical basis of ${\bf
R}^n$ and $c(E_i)\in {\rm cl}_{\bf C}(n)\cong {\rm Hom}(\wedge^*_{\bf C}(\frac{n+1}{2}),\wedge^* _{\bf C}(\frac{n+1}{2}))$ be the
Clifford action. In the following, since the global form $\Phi$ is independent of the choice of the local frame, so we can
compute ${\rm tr}_{S(TM)}$ in the frame $\{[(\sigma,f_i)],~1\leq i\leq 6\}.$

By the relation of the Clifford action and ${\rm tr}{AB}={\rm tr }{BA}$, we obtain the equalities:
\begin{equation}
 {\rm tr}[c(\xi')]=0;~~{\rm tr}[c(dx_n)]=0;~~{\rm tr}[c(\xi)](x_0)|_{|\xi'|=1}=0.
\end{equation}
Combining (5.21), (5.22) and (5.23), we obtain
\begin{eqnarray}
&& \text{trace}[\partial_{x_{n}}\pi_{\xi_{n}}^{+}\sigma_{-2}(D^{-2})\partial_{\xi_{n}}^{2}\sigma_{-1}(D^{-1})](x_{0})|_{|\xi'|=1}
\nonumber\\
&=&\sqrt{-1}h'(0)\text{trace}\left[\frac{i\xi_{n}+2}{4(\xi_{n}-i)^{2}}\times\left(-\frac{6\xi_nc(dx_n)+2c(\xi')}
{|\xi|^4}+\frac{8\xi_n^2c(\xi)}{|\xi|^6}\right)\right]\nonumber\\
&=&\sqrt{-1}h'(0)\frac{i\xi_{n}+2}{4(\xi_{n}-i)^{2}}\times\left(-\frac{6\xi_n\text{tr}[c(dx_n)]+2\text{tr}[c(\xi')]}
{|\xi|^4}+\frac{8\xi_n^2\text{tr}[c(\xi)]}{|\xi|^6}\right)\nonumber\\
&=&0.
\end{eqnarray}
Therefore $\textbf{Case a (II)}=0.$

\textbf{Case a (III)}: \ $r=-2, \ \ell=-1, \ j=|\alpha|=0, \ k=1$

 From (5.10) we have
  \begin{equation}
\text{ Case a (\text{III}) }=-\frac{1}{2}\int_{|\xi'|=1}\int_{-\infty}^{+\infty}\text{trace}[\partial_{\xi_{n}}\pi_{\xi_{n}}^{+}
\sigma_{-2}(D^{-2})\times\partial_{\xi_{n}}\partial_{x_{n}}\sigma_{-1}(D^{-1})](x_{0})
    \texttt{d}\xi_{n}\sigma(\xi')\texttt{d}x'.
 \end{equation}
An application of Lemma 2.2 in \cite{Wa3} shows
 \begin{equation}
\partial_{\xi_n}\partial_{x_n}\sigma_{-1}(D^{-1})(x_0)|_{|\xi'|=1}=-\sqrt{-1}h'(0)
\left[\frac{c(dx_n)}{|\xi|^4}-4\xi_n\frac{c(\xi')+\xi_nc(dx_n)}{|\xi|^6}\right]-
\frac{2\xi_n\sqrt{-1}\partial_{x_n}c(\xi')(x_0)}{|\xi|^4},
 \end{equation}
and
 \begin{equation}
\partial_{\xi_n}\pi_{\xi_n}^+\sigma_{-2}(D^{-2})(x_0)|_{|\xi'|=1}=\frac{i}{2(\xi_n-i)^2}.
 \end{equation}

From (5.24), (5.27) and (5.28), we have
 \begin{eqnarray}
 && \text{trace}[\partial_{\xi_{n}}\pi_{\xi_{n}}^{+}\sigma_{-2}(D^{-2})\times\partial_{\xi_{n}}\partial_{x_{n}}
     \sigma_{-1}(D^{-1})](x_{0})|_{|\xi'|=1}\nonumber\\
  &=& -\sqrt{-1}h'(0)\frac{i}{2(\xi_n-i)^2}\left[\frac{\text{tr}[c(dx_n)]}{|\xi|^4}-4\xi_n\frac{\text{tr}[c(\xi')]+\xi_n\text{tr}
  [c(dx_n)]}{|\xi|^6}\right]-
\frac{2\xi_n\sqrt{-1}\text{tr}[\partial_{x_n}c(\xi')(x_0)]}{|\xi|^4} \nonumber\\
    &=&0.
 \end{eqnarray}
Hence $\textbf{Case a (III)}=0.$

\textbf{Case b}: \ $r=-2, \ \ell=-2, \ k=j=|\alpha|=0$

From (5.10) we have
\begin{equation}
\text{ Case b}=-i\int_{|\xi'|=1}\int_{-\infty}^{+\infty}\text{trace}[\pi_{\xi_{n}}^{+}\sigma_{-2}(D^{-2})
       \times\partial_{\xi_{n}}\sigma_{-2}(D^{-1})](x_{0})\texttt{d}\xi_{n}\sigma(\xi')\texttt{d}x' .
\end{equation}
By (2.2.16) in \cite{Wa3}, we obtain
\begin{equation}
\pi_{\xi_{n}}^{+}\sigma_{-2}(D^{-2})(x_{0})|_{|\xi'|=1}=\frac{1}{2i(\xi_n-i)},
\end{equation}
and
\begin{eqnarray}
\partial_{\xi_n}\sigma_{-2}(D^{-1})(x_0)|_{|\xi'|=1}&=&
\frac{1}{(1+\xi_n^2)^3}[(2\xi_n-2\xi_n^3)c(dx_n)p_0c(dx_n)+(1-3\xi_n^2)c(dx_n)p_0c(\xi')\nonumber\\
&&+ (1-3\xi_n^2)c(\xi')p_0c(dx_n)-4\xi_nc(\xi')p_0c(\xi')\nonumber\\
&&+(3\xi_n^2-1)\partial_{x_n}c(\xi')-4\xi_nc(\xi')c(dx_n)\partial_{x_n}c(\xi')\nonumber\\
&&+2h'(0)c(\xi')+2h'(0)\xi_nc(dx_n)]
+6\xi_nh'(0)\frac{c(\xi)c(dx_n)c(\xi)}{(1+\xi^2_n)^4}.
\end{eqnarray}
Notice $p_0=-\frac{1}{4}\sum_{s,t}\omega_{s,t}(\partial_i)c(\widetilde{e_s})c(\widetilde{e_t}).$
By the relation of the Clifford action and ${\rm tr}{AB}={\rm tr }{BA}$, we obtain
\begin{eqnarray}
\text{tr}[c(\xi')c(dx_n)\partial_{x_n}c(\xi')]&=&\text{tr}[(c(\xi')e_{6})(c(dx_n)e_{6})\partial_{x_n}(c(\xi')e_{6})]\nonumber\\
&=&\text{tr}[(c(\xi')e_{6})c(dx_n)\partial_{x_n}(c(\xi'))]=\text{tr}[\partial_{x_n}(c(\xi'))(c(\xi')e_{6})c(dx_n)]\nonumber\\
&=&-\text{tr}[\partial_{x_n}(c(\xi')e_{6})c(dx_n)c(\xi')]=-\text{tr}[c(\xi')c(dx_n)\partial_{x_n}c(\xi')],
\end{eqnarray}
then $\text{tr}[c(\xi')c(dx_n)\partial_{x_n}c(\xi')]=0$.

Combining (5.31) and (5.32), we obtain
\begin{eqnarray}
&&\text{trace}[\pi_{\xi_{n}}^{+}\sigma_{-2}(D^{-2})
       \times\partial_{\xi_{n}}\sigma_{-2}(D^{-1})](x_{0})|_{|\xi'|=1}\nonumber\\
&=&\frac{1}{2i(\xi_n-i)(1+\xi_n^2)^3}\text{trace}\Big[
(2\xi_n-2\xi_n^3)c(dx_n)p_0c(dx_n)+(1-3\xi_n^2)c(dx_n)p_0c(\xi')\nonumber\\
&&+ (1-3\xi_n^2)c(\xi')p_0c(dx_n)-4\xi_nc(\xi')p_0c(\xi')+(3\xi_n^2-1)\partial_{x_n}c(\xi')-4\xi_nc(\xi')c(dx_n)\partial_{x_n}c(\xi')\nonumber\\
&&+2h'(0)c(\xi')+2h'(0)\xi_nc(dx_n)\Big]
+\frac{6\xi_nh'(0)}{2i(\xi_n-i)(1+\xi^2_n)^4}\text{trace}[c(\xi)c(dx_n)c(\xi)]\nonumber\\
&=&\frac{1}{2i(\xi_n-i)(1+\xi_n^2)^3}\Big[
(2\xi_n-2\xi_n^3)h'(0)\text{tr}[c(dx_n)]+(1-3\xi_n^2)h'(0)\text{tr}[c(\xi')]\nonumber\\
&&+ (1-3\xi_n^2)h'(0)\text{tr}[c(\xi')]+4\xi_nh'(0)\text{tr}[c(dx_n)]+(3\xi_n^2-1)\text{tr}[\partial_{x_n}c(\xi')]
-4\xi_n\text{tr}[c(\xi')c(dx_n)\partial_{x_n}c(\xi')]\nonumber\\
&&+2h'(0)c(\xi')+2h'(0)\xi_nc(dx_n)\Big]
+\frac{6\xi_nh'(0)}{2i(\xi_n-i)(1+\xi^2_n)^4}\text{tr}[c(\xi)c(dx_n)c(\xi)]\nonumber\\
&=&0
\end{eqnarray}
Then $\textbf{Case b}=0$.

\textbf{Case c}: \ $r=-3, \ \ell=-1, \ k=j=|\alpha|=0$

From(5.10) we have
\begin{equation}
\text{Case c}=-i\int_{|\xi'|=1}\int_{-\infty}^{+\infty}\text{trace}[\pi_{\xi_{n}}^{+}\sigma_{-3}(D^{-2})
 \times \partial_{\xi_{n}}\sigma_{-1}(D^{-1})](x_{0})\texttt{d}\xi_{n}\sigma(\xi')\texttt{d}x'  .
 \end{equation}

 Now, in the normal coordinate, $g^{ij}(x_0)=\delta_i^j$ and $\partial_{x_j}(g^{\alpha\beta})(x_0)=0,$ {\rm if
}$j<n;~=h'(0)\delta^\alpha_\beta,~{\rm if }~j=n.$ So by Lemma A.2 in \cite{Wa1}, we have $\Gamma^n(x_0)=\frac{5}{2}h'(0)$ and
$\Gamma^k(x_0)=0$ for $k<n$. By (20) in \cite{Wa4}, we have
\begin{eqnarray}
\sigma_{-3}(D^{-2})(x_0)|_{|\xi'|=1}
&=&-\sqrt{-1}|\xi|^{-4}\xi_k(\Gamma^k-2\delta^k)(x_0)|_{|\xi'|=1}-\sqrt{-1}|\xi|^{-6}2\xi^j\xi_\alpha\xi_\beta
\partial_jg^{\alpha\beta}(x_0)|_{|\xi'|=1}\nonumber\\
&=&\frac{-i}{(1+\xi_n^2)^2}(-\frac{1}{2}h'(0)\sum_{k<n}\xi_k
c(\widetilde{e_k})c(\widetilde{e_n})+\frac{5}{2}h'(0)\xi_n)-\frac{2ih'(0)\xi_n}{(1+\xi_n^2)^3}.
 \end{eqnarray}
We note that $\int_{|\xi'|=1}\xi_1\cdots\xi_{2q+1}\sigma(\xi')=0$, so the first term in (5.36) has no contribution for
computing \textbf{Case c)}.

By the Cauchy integral formula,  we get
\begin{eqnarray}
\pi_{\xi_{n}}^{+}[\frac{\xi_{n}}{(1+\xi_{n}^{2})^{2}}](x_{0})|_{|\xi'|=1}
  &=&\frac{1}{2\pi i}\lim_{u\rightarrow 0^{-}}\int_{\Gamma^{+}}\frac{\frac{\eta_{n}}{(\eta_{n}+i)^{2}(\xi_{n}+iu-\eta_{n})}}
{(\eta_{n}-i)^{2}}\texttt{d}\eta_{n}\nonumber\\
 &=&\left[\frac{\eta_{n}}{(\eta_n+i)^2(\xi_n-\eta_n)}\right]^{(1)}|_{\eta_n=i}=-\frac{i\xi_{n}}{4(\xi_{n}-i)^{2}},
 \end{eqnarray}
and
\begin{eqnarray}
\pi_{\xi_{n}}^{+}[\frac{\xi_{n}}{(1+\xi_{n}^{2})^{5}}](x_{0})|_{|\xi'|=1}
  &=&\frac{1}{2\pi i}\lim_{u\rightarrow 0^{-}}\int_{\Gamma^{+}}\frac{\frac{\eta_{n}}{(\eta_{n}+i)^{5}(\xi_{n}+iu-\eta_{n})}}
{(\eta_{n}-i)^{5}}\texttt{d}\eta_{n}\nonumber\\
 &=&\left[\frac{\eta_{n}}{(\eta_n+i)^5(\xi_n-\eta_n)}\right]^{(4)}|_{\eta_n=i}\nonumber\\
 &=&\frac{3(35+47i\xi_{n}-25\xi_{n}^{2}-5i\xi_{n}^{3})}{32(\xi_{n}-i)^{5}}.
 \end{eqnarray}
Form (5.37) and (5.38), we obtain
 \begin{equation}
\pi_{\xi_{n}}^{+}\sigma_{-3}(D^{-2})(x_{0})|_{|\xi'|=1}=-\frac{5h'(0)\xi_{n}}{8(\xi_{n}-i)^{2}}
 -\frac{3h'(0)(35+47i\xi_{n}-25\xi_{n}^{2}-5i\xi_{n}^{3})}{16(\xi_{n}-i)^{5}}.
\end{equation}
By (5.13), we have
 \begin{equation}
  \partial_{\xi_{n}}\sigma_{-1}(D^{-1})(x_{0})|_{|\xi'|=1}=\sqrt{-1}\left(\frac{c(dx_n)}
{|\xi|^2}-\frac{2\xi_nc(\xi)}{|\xi|^4}\right).
\end{equation}
Combining (5.39) and (5.40), we obtain
\begin{eqnarray}
&&\text{trace}[\pi_{\xi_{n}}^{+}\sigma_{-3}(D^{-2})
 \times \partial_{\xi_{n}}\sigma_{-1}(D^{-1})](x_{0})|_{|\xi'|=1}\nonumber\\
 &=&\left[-\frac{5h'(0)\xi_{n}}{8(\xi_{n}-i)^{2}}
 -\frac{3h'(0)(35+47i\xi_{n}-25\xi_{n}^{2}-5i\xi_{n}^{3})}{16(\xi_{n}-i)^{5}}\right]\times\left(\frac{i \texttt{tr}[c(dx_n)]}
{|\xi|^2}-\frac{2i\xi_n\texttt{tr}[c(\xi)]}{|\xi|^4}\right)\nonumber\\
 &=&0.
 \end{eqnarray}
Then
\begin{equation}
\text{Case \ c}=-i\int_{|\xi'|=1}\int_{-\infty}^{+\infty}\texttt{trace}[\partial_{\xi_{n}}\sigma_{-3}(D^{-1})
           \times\sigma_{-1}(D^{-1})](x_{0})\texttt{d}\xi_{n}\sigma(\xi')\texttt{d}x'=0.
 \end{equation}
Since $\Phi$  is the sum of the \textbf{case a, b} and \textbf{c}, so is zero. Then we have
\begin{thm}\label{th:32}
Let M be a 5-dimensional compact connected manifold with the boundary $\partial M$ and the metric $g^{M}$ as above ,
and $D$ the Dirac operator on $S(TM)$, then
\begin{equation}
Vol_{5}^{(2,1)}(M)=-\frac{\pi\sqrt[5]{30}}{240\sqrt[10]{\pi}2^p\pi^{p+\frac{q}{2}}}\int_Mr_M\texttt{d}vol,
\end{equation}
where  $r_M$ be the scalar curvature.
\end{thm}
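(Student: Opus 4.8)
The plan is to evaluate $Vol_{5}^{(2,1)}(M)=\widetilde{{\rm Wres}}[\pi^{+}D^{-2}\circ\pi^{+}D^{-1}]$ by splitting it, via the noncommutative residue formula for manifolds with boundary, into an interior integral and a boundary integral:
\begin{equation}
\widetilde{{\rm Wres}}[\pi^{+}D^{-2}\circ\pi^{+}D^{-1}]=\int_M\int_{|\xi|=1}{\rm trace}_{S(TM)}[\sigma_{-5}(D^{-2-1})]\,\sigma(\xi)\,dx+\int_{\partial M}\Phi .
\end{equation}
For the interior term I would use that $[\sigma_{-5}(D^{-2-1})]|_M$ coincides with the corresponding symbol on a closed manifold, so Theorem 2.9 (equivalently Ponge's local formula) applies pointwise; together with the value of the constant $v_{5,1}=\frac{\pi\sqrt[5]{30}}{20\sqrt[10]{\pi}}$ coming from the $\Gamma$-function formula, and with the vanishing $\int_M\Delta_g k\,dv_g=\int_M g(\nabla k,\nabla 1)\,dv_g=0$ of the total-divergence part of the relevant heat coefficient, this yields exactly the displayed multiple of $\int_M r_M\,\texttt{d}vol$. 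Hence the whole content of the theorem is the claim $\int_{\partial M}\Phi=0$.

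To handle $\Phi$ I would work near $\partial M$ with the metric $g^{M}=h(x_n)^{-1}g^{\partial M}+dx_n^2$ in normal coordinates on $\partial M$ centered at a fixed $x_0\in\partial M$, and use the symbol expansions of $D^{-1}$ and $D^{-2}$ recalled in the symbol lemma above, namely $\sigma_{-1}(D^{-1})$, $\sigma_{-2}(D^{-1})$, $\sigma_{-2}(D^{-2})$ and $\sigma_{-3}(D^{-2})$. Since $n=5$, the index condition $r-k-|\alpha|+\ell-j-1=-5$ with $r\le-2$, $\ell\le-1$ forces either $(r,\ell)=(-2,-1)$ with $(k,j,|\alpha|)\in\{(0,0,1),(0,1,0),(1,0,0)\}$, or $(r,\ell)\in\{(-2,-2),(-3,-1)\}$ with $k=j=|\alpha|=0$; so $\Phi$ is a sum of five explicit terms, to be treated one at a time exactly as in the 5-dimensional computations preceding the statement.

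The vanishing of every one of these five terms rests on a single elementary fact: on $S(TM)$ with $n=5$, the trace of a product of an \emph{odd} number of Clifford generators is zero, so that ${\rm tr}[c(\xi')]={\rm tr}[c(dx_n)]={\rm tr}[c(\xi)]=0$, ${\rm tr}[\partial_{x_n}c(\xi')]=0$, and moreover ${\rm tr}[c(\xi')c(dx_n)\partial_{x_n}c(\xi')]=0$ by the identity ${\rm tr}(AB)={\rm tr}(BA)$ together with the Clifford relations. The first case vanishes because $\partial_{x_i}\sigma_{-1}(D^{-1})(x_0)=0$ for $i<n$ in normal coordinates. For the remaining four cases I would compute the $\pi^{+}_{\xi_n}$-projections by the Cauchy integral formula (obtaining rational functions of $\xi_n$ multiplying $c(\xi')$, $c(dx_n)$, $\partial_{x_n}c(\xi')$, $p_0$ and $\Gamma^k$), perform the indicated $\xi_n$-differentiations, and then expand the trace; in each case every surviving summand is a scalar rational function of $\xi_n$ times one of the odd-order traces listed above, hence is identically zero. (In Case c one first applies the Leibniz rule and uses that the ``$++$'' and ``$--$'' contributions integrate to zero in $\xi_n$ to rewrite the integrand before taking traces.) Therefore $\Phi\equiv0$ on $\partial M$, so $\int_{\partial M}\Phi=0$, and combining with the interior term gives the stated formula.

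The main obstacle I anticipate is purely organizational rather than conceptual: $\sigma_{-2}(D^{-1})$ and $\sigma_{-3}(D^{-2})$ contain many terms, so Cases b and c require fully expanding $\partial_{\xi_n}\sigma_{-2}(D^{-1})$ and $\pi^{+}_{\xi_n}\sigma_{-3}(D^{-2})$ and carefully tracking which products of Clifford generators occur before the odd-trace vanishing can be invoked; once this bookkeeping is carried out, each case collapses to zero.
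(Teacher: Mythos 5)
Your proposal is correct and follows essentially the same route as the paper: the same interior/boundary decomposition, the evaluation of the interior term via Theorem 2.9 with $v_{5,1}=\frac{\pi\sqrt[5]{30}}{20\sqrt[10]{\pi}}$, and the case-by-case vanishing of the five boundary contributions, with Case a (I) killed by $\partial_{x_i}\sigma_{-1}(D^{-1})(x_0)=0$ in normal coordinates and the remaining cases killed by the vanishing of the odd Clifford traces ${\rm tr}[c(\xi')]$, ${\rm tr}[c(dx_n)]$, ${\rm tr}[c(\xi)]$, ${\rm tr}[\partial_{x_n}c(\xi')]$ and ${\rm tr}[c(\xi')c(dx_n)\partial_{x_n}c(\xi')]$. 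The only cosmetic difference is that the paper evaluates Case c directly by computing $\pi^+_{\xi_n}\sigma_{-3}(D^{-2})$ with the Cauchy integral formula rather than via the Leibniz/``++''--``$--$'' rearrangement you mention, but this does not affect the argument.
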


Let us now consider the Einstein-Hilbert action for 5-dimensional manifolds with boundary.
Let $P, P'$ be two pseudodifferential operators with transmission property
(see \cite{Wa1}) on $S(TM)$. Motivated by (5.10), we define locally
\begin{eqnarray}
&&res_{2,1}(P,P'):=-\frac{1}{2}\int_{|\xi'|=1}\int_{-\infty}^{+\infty}\texttt{trace}[\partial_{x_{n}}\pi_{\xi_{n}}^{+}\sigma_{-2}(P^{-2})
 \partial_{\xi_{n}}^{2}\sigma_{-1}(P^{-1})](x_{0})\texttt{d}\xi_{n}\sigma(\xi')\texttt{d}x' ;\\
&&res_{2,2}(P,P'):=-i\int_{|\xi'|=1}\int_{-\infty}^{+\infty}\texttt{trace}[\pi_{\xi_{n}}^{+}\sigma_{-2}(P^{-2})
  \partial_{\xi_{n}}\sigma_{-2}(P^{-1})](x_{0})\texttt{d}\xi_{n}\sigma(\xi')\texttt{d}x'.
 \end{eqnarray}
From (5.26) and (5.30), we have
 \begin{equation}
\textbf{ Case a (II)}=res_{2,1}(D^{-2}, D^{-1}); \ \textbf{Case b}= res_{2,2}(D^{-2}, D^{-1}).
\end{equation}
Without loss of generality, we may assume that $\partial_{M}$ is flat, then $\{\texttt{d}x_{i}=e_{i}\}$, $g_{i,j}^{\partial_{M}}=\delta_{i,j}$,
$\partial_{x_{s}}g_{i,j}^{\partial_{M}}=0$. So $res_{2,1}(D^{-2}, D^{-1})$ and $res_{2,2}(D^{-2}, D^{-1})$
are two global forms locally defined by the aboved oriented orthonormal basis ${\texttt{d}x_{i}}$.  Hence by \textbf{Case a II)}
and \textbf{Case b},
we have
\begin{thm}\label{th:41}
Let $M$ be a 5-dimensional compact manifold with boundary $\partial_{M}$ and the metric $g^{M}$ as above, and
$D$ the Dirac operator on $S(TM)$. Assume $\partial_{M}$ is flat, then
\begin{equation}
\int_{\partial_{M}}res_{2, 1}(D^{-2}, D^{-1})=
\int_{\partial_{M}}res_{2, 2}(D^{-2}, D^{-1})=0.
\end{equation}
\end{thm}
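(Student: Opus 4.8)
The plan is to derive the theorem directly from the computations of \textbf{Case a (II)} and \textbf{Case b} carried out above, together with the identification recorded just before the statement, namely $\textbf{Case a (II)} = \mathrm{res}_{2,1}(D^{-2},D^{-1})$ and $\textbf{Case b} = \mathrm{res}_{2,2}(D^{-2},D^{-1})$. Since $\partial M$ is assumed flat, the oriented orthonormal coframe $\{dx_i = e_i\}$ is globally defined on $\partial M$, so both $\mathrm{res}_{2,1}(D^{-2},D^{-1})$ and $\mathrm{res}_{2,2}(D^{-2},D^{-1})$ are genuine global $(n-1)$-forms on $\partial M$; hence it suffices to show that each of the two defining integrands vanishes at an arbitrary fixed point $x_0 \in \partial M$, computed in the normal coordinates of $x_0$.

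First I would treat $\mathrm{res}_{2,1}(D^{-2},D^{-1})$, whose integrand is $-\tfrac{1}{2}\,\mathrm{trace}[\partial_{x_n}\pi^+_{\xi_n}\sigma_{-2}(D^{-2}) \times \partial^2_{\xi_n}\sigma_{-1}(D^{-1})]$, i.e. exactly the integrand of \textbf{Case a (II)}. The point is that $\pi^+_{\xi_n}\partial_{x_n}\sigma_{-2}(D^{-2})(x_0)|_{|\xi'|=1}$ is a scalar multiple of the identity, while $\partial^2_{\xi_n}\sigma_{-1}(D^{-1})(x_0)|_{|\xi'|=1}$ is a scalar linear combination of $c(dx_n)$, $c(\xi')$ and $c(\xi)$; so the trace reduces to a scalar linear combination of $\mathrm{tr}[c(dx_n)]$, $\mathrm{tr}[c(\xi')]$ and $\mathrm{tr}[c(\xi)]$, each of which vanishes because the trace of a single odd-degree Clifford element is zero. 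Hence the integrand is identically $0$, so $\mathrm{res}_{2,1}(D^{-2},D^{-1})$ vanishes pointwise; this is precisely the vanishing of \textbf{Case a (II)}.

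Next I would treat $\mathrm{res}_{2,2}(D^{-2},D^{-1})$, whose integrand $-i\,\mathrm{trace}[\pi^+_{\xi_n}\sigma_{-2}(D^{-2}) \times \partial_{\xi_n}\sigma_{-2}(D^{-1})]$ is the integrand of \textbf{Case b}. Here $\pi^+_{\xi_n}\sigma_{-2}(D^{-2})(x_0)|_{|\xi'|=1}$ is again a scalar, while $\partial_{\xi_n}\sigma_{-2}(D^{-1})(x_0)|_{|\xi'|=1}$ is a scalar combination of the terms $c(dx_n)p_0c(dx_n)$, $c(dx_n)p_0c(\xi')$, $c(\xi')p_0c(dx_n)$, $c(\xi')p_0c(\xi')$, $\partial_{x_n}c(\xi')$, $c(\xi')c(dx_n)\partial_{x_n}c(\xi')$, $c(\xi')$, $c(dx_n)$, and $c(\xi)c(dx_n)c(\xi)$. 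Substituting $p_0 = -\tfrac{1}{4}\sum_{s,t}\omega_{s,t}(\partial_i)c(\widetilde{e_s})c(\widetilde{e_t})$ and expanding, every contribution is either a scalar multiple of the trace of an odd-degree Clifford element (hence $0$), or the single exceptional term $\mathrm{tr}[c(\xi')c(dx_n)\partial_{x_n}c(\xi')]$, which vanishes since conjugating by $c(dx_n)$ and using cyclicity of the trace shows it equals its own negative. Thus the integrand is identically zero, so $\mathrm{res}_{2,2}(D^{-2},D^{-1})$ vanishes pointwise, which is the vanishing of \textbf{Case b}. Integrating the two identically vanishing forms over $\partial M$ then gives the asserted equalities.

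I do not expect a genuine obstacle here: the argument is essentially a repackaging of the already-established facts $\textbf{Case a (II)} = 0$ and $\textbf{Case b} = 0$ through the stated identification. The only point worth a word of care is the role of the flatness hypothesis, which guarantees that $\mathrm{res}_{2,1}$ and $\mathrm{res}_{2,2}$ are frame-independent global forms on $\partial M$ so that $\int_{\partial M}$ of them is meaningful; one should also note that the normal-coordinate simplifications used in Cases a (II) and b relied only on $h(0) = 1$ and on the odd-dimensional trace identities, never on any curvature of $\partial M$, so nothing is lost in the flat case.
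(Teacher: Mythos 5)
Your proof is the paper's own argument: the theorem is obtained precisely by combining the identification $\textbf{Case a (II)}=res_{2,1}(D^{-2},D^{-1})$, $\textbf{Case b}=res_{2,2}(D^{-2},D^{-1})$ from (5.46) with the pointwise vanishing of those two integrands established in (5.24) and (5.34), the flatness of $\partial M$ serving only to make the two densities globally defined forms. So the approach and conclusion are correct.

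One sub-step of your re-derivation of $\textbf{Case b}=0$ is not right as stated, namely the claim that $\mathrm{tr}[c(\xi')c(dx_n)\partial_{x_n}c(\xi')]=0$ because "conjugating by $c(dx_n)$ and using cyclicity shows it equals its own negative." Conjugation by $c(dx_n)$ sends the three factors to $-c(\xi')$, $+c(dx_n)$, $-\partial_{x_n}c(\xi')$ respectively, so it fixes the product; combining cyclicity with the anticommutation relations likewise only returns a tautology. Indeed the vanishing cannot be a formal consequence of these identities: in the two-dimensional irreducible representation of $\mathrm{cl}_{\bf C}(3)$ the product of three distinct generators is a nonzero multiple of the identity and has nonzero trace. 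The paper's (5.33) uses the specific realization $c(v)=\tilde c(v)\tilde c(e_6)$ of $\mathrm{cl}(5)$ inside $\mathrm{cl}(6)$, under which $c(e_i)c(e_n)c(e_j)$ becomes $\tilde c(e_i)\tilde c(e_n)\tilde c(e_j)\tilde c(e_6)$ (or $\tilde c(e_n)\tilde c(e_6)$ when $i=j$), a product of distinct generators of $\mathrm{cl}(6)$ and hence traceless. Since this fact is already proved in the paper and you are only invoking the established vanishing of $\textbf{Case b}$, the flaw is local and does not affect the validity of the theorem, but the justification should be replaced by the representation-specific one.
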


Nextly, for $4$-dimensional spin manifolds with boundary, we compute ${\rm Vol}^{(2,1)}_4$. By (4) in \cite{Wa4}, we have
 \begin{equation}
 \widetilde{{\rm Wres}}[\pi^+D^{-2}\circ\pi^+D^{-1}]=\int_{\partial M}\Phi.
\end{equation}

When $n=4$, in (5.10), $ r-k-|\alpha|+l-j-1=-3,~~r\leq-2,l\leq-1$, so
we get $r=-2, \ l=-1,~k=|\alpha|=j=0,$ then
  \begin{equation}
\Phi=-i\int_{|\xi'|=1}\int_{-\infty}^{+\infty}\text{trace}[\pi_{\xi_{n}}^{+}
\sigma_{-2}(D^{-2})\times\partial_{\xi_{n}}\sigma_{-1}(D^{-1})](x_{0})
    \texttt{d}\xi_{n}\sigma(\xi')\texttt{d}x'.
 \end{equation}
From (5.13), we obtain
 \begin{equation}
  \partial_{\xi_{n}}\sigma_{-1}(D^{-1})(x_{0})|_{|\xi'|=1}=\sqrt{-1}\left(\frac{c(dx_n)}
{|\xi|^2}-\frac{2\xi_nc(\xi)}{|\xi|^4}\right).
\end{equation}
Combining (5.31) and (5.50), we obtain
 \begin{equation}
\text{trace}[\pi_{\xi_{n}}^{+}
\sigma_{-2}(D^{-2})\times\partial_{\xi_{n}}\sigma_{-1}(D^{-1})](x_{0})|_{|\xi'|=1}=0.
\end{equation}
Therefore ${\rm Vol}^{(2,1)}_4=0$.

\begin{rem}
In fact, we may generalize Theorem 5.4 and Theorem 5.5 to the foliation case.
\end{rem}
Then we have
\begin{thm}\label{th:32}
Let M be a 5-dimensional compact connected foliation with the boundary $\partial M$ and the metric $g^{M}$ as above ,
and $D_{F}$ be the sub-Dirac operator, then
\begin{equation}
Vol_{5}^{(2,1)}(M)=-\frac{\pi\sqrt[5]{30}}{240\sqrt[10]{\pi}2^p\pi^{p+\frac{q}{2}}}\int_Mr_M\texttt{d}vol,
\end{equation}
where  $r_M$ be the scalar curvature.
\end{thm}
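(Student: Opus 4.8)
The plan is to repeat, almost verbatim, the proof of Theorem 5.4, now with the Dirac operator $D$ replaced by the sub-Dirac operator $D_{F}$ and the spinor bundle $S(TM)$ replaced by $S(F)\otimes\wedge(F^{\perp,\star})$; this is the generalisation already announced in Remark 5.6. By Definition 3.1 and the boundary splitting formula (3.3), specialised to $n=5$, $p_{1}=2$, $p_{2}=1$, we have
\[
Vol_{5}^{(2,1)}(M)=\widetilde{{\rm Wres}}\big[\pi^{+}D_{F}^{-2}\circ\pi^{+}D_{F}^{-1}\big]
=\int_{M}\int_{|\xi|=1}{\rm trace}_{S(F)\otimes\wedge(F^{\perp,\star})}\big[\sigma_{-5}(D_{F}^{-3})\big]\,\sigma(\xi)\,dx+\int_{\partial M}\Phi ,
\]
so it is enough to evaluate the interior integral and to show that $\int_{\partial M}\Phi=0$.

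For the interior integral, $[\sigma_{-5}(D_{F}^{-3})]|_{M}$ is a purely local object with the same expression as in the boundaryless case, so Theorem 2.9 applies as in \S5.2: the only weight-two complete contraction of the curvature that survives the fibre trace is $r_{M}$, and, using $\int_{M}\Delta_{g}r_{M}\,{\rm dvol}=0$, the interior integral equals $-\frac{\pi\sqrt[5]{30}}{240\sqrt[10]{\pi}\,2^{p}\pi^{p+q/2}}\int_{M}r_{M}\,{\rm dvol}$. Here the fibre dimension $\dim[S(F)\otimes\wedge(F^{\perp,\star})]=2^{p+q}=\tilde l\cdot 2^{q}$ is absorbed into the factor $2^{p}\pi^{p+q/2}$ exactly as for $a_{2}(D_{F}^{2})=-\frac{1}{12\cdot 2^{p}\pi^{p+q/2}}\int_{M}r_{M}\,{\rm dvol}$, so the numerical constant is unchanged from the spin situation.

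For the boundary term I would expand $D_{F}^{-1}$ and $D_{F}^{-2}$ near a fixed point $x_{0}\in\partial M$ in the normal coordinates of $\partial M$, assuming (as is permitted) that $dx_{n}=h_{q}^{\star}\in\Gamma(F^{\perp,\star})$, and use Lemma 3.2 for $\sigma_{-1}(D_{F}^{-1})$, $\sigma_{-2}(D_{F}^{-1})$ and Lemma 4.1 for $\sigma_{-2}(D_{F}^{-2})$, $\sigma_{-3}(D_{F}^{-2})$. The constraint $-r-\ell+k+j+|\alpha|=4$ with $r\le-2$, $\ell\le-1$ forces precisely the five cases a(I), a(II), a(III), b), c) of the proof of Theorem 5.4. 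Case a(I) vanishes because $\partial_{x_{i}}\sigma_{-1}(D_{F}^{-1})(x_{0})=0$ for $i<n$. In each of the other four cases, after applying $\pi^{+}_{\xi_{n}}$ and using the Clifford relations together with ${\rm tr}(AB)={\rm tr}(BA)$ and ${\rm tr}(A\otimes B)={\rm tr}(A)\,{\rm tr}(B)$, the integrand becomes a linear combination of traces of operators of odd Clifford degree on $S(F)\otimes\wedge(F^{\perp,\star})$ — concretely of expressions such as ${\rm tr}[c(\xi')]$, ${\rm tr}[c(dx_{n})]$, ${\rm tr}[c(\xi)]$ and ${\rm tr}[c(\xi')c(dx_{n})\partial_{x_{n}}c(\xi')]$ — each of which vanishes; the only difference from the spin case is that ${\rm tr}_{S(F)\otimes\wedge(F^{\perp,\star})}[{\rm id}]=\tilde l\cdot 2^{q}$ replaces $\dim S(TM)$, an overall scalar that cannot affect the vanishing. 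Hence $\Phi=0$ and the stated formula follows.

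The only point that genuinely needs attention, and where this differs from Theorem 5.4, is to confirm that the terms carried by $D_{F}$ but absent for the ordinary Dirac operator do not spoil these trace cancellations. Precisely, $\sigma_{0}(D_{F})$ contains, besides the spin-connection part, the cubic Clifford terms $\tfrac{1}{2}\langle\nabla^{TM}_{f_{i}}f_{j},h_{s}\rangle c(f_{i})c(f_{j})c(h_{s})$ and $\tfrac{1}{2}\langle\nabla^{TM}_{h_{s}}h_{t},f_{i}\rangle c(h_{s})c(h_{t})c(f_{i})$ arising from $\widetilde{\nabla}^{F}$, and these propagate into $\sigma_{-2}(D_{F}^{-1})$ and $\sigma_{-3}(D_{F}^{-2})$; one has to check that each of them has the same $\mathbf{Z}_{2}$-parity on $S(F)\otimes\wedge(F^{\perp,\star})$ as the spin-connection term it accompanies (odd inside $\sigma_{-2}(D_{F}^{-1})$, even inside $\sigma_{-3}(D_{F}^{-2})$), so that in each of cases a(II), a(III), b), c) the integrand remains a trace of an odd-degree operator and therefore still vanishes, exactly as in \S5.2. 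Carrying out this parity bookkeeping uniformly across the five cases is the bulk of the work; no new phenomenon appears, and the theorem follows immediately once it is done.
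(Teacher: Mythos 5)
Your overall plan coincides with the paper's: Theorem 5.7 is obtained there purely by invoking Remark 5.6, i.e.\ by rerunning the five-case boundary computation of Section 5.2 with $D$ replaced by $D_F$ and $S(TM)$ by $S(F)\otimes\wedge(F^{\perp,\star})$, and your treatment of the interior term, of case a(I), and your identification of the only genuinely new ingredient --- the cubic Clifford terms $\tfrac12\langle\nabla^{TM}_{f_i}f_j,h_s\rangle c(f_i)c(f_j)c(h_s)$ and $\tfrac12\langle\nabla^{TM}_{h_s}h_t,f_i\rangle c(h_s)c(h_t)c(f_i)$ in $\sigma_0(D_F)$ --- are all as they should be.

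There is, however, one step in your justification that does not hold as stated and needs to be repaired. You rest the vanishing of cases a(II), a(III), b), c) on the claim that any operator of odd Clifford degree on $S(F)\otimes\wedge(F^{\perp,\star})$ is traceless. On the factor $\wedge(F^{\perp,\star})$ this is true, since each $c(h_s)=h_s^\star\wedge-i_{h_s}$ reverses the form degree mod $2$, so an odd product is block off-diagonal. But on $S(F)$ it fails when $\dim F$ is odd: the volume element $c(f_1)\cdots c(f_p)$ acts as a nonzero scalar on the irreducible spinor module, so for instance ${\rm tr}_{S(F)}[c(f_1)]\neq0$ when $p=1$ and ${\rm tr}_{S(F)}[c(f_1)c(f_2)c(f_3)]\neq0$ when $p=3$. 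Since $\dim F+\dim F^{\perp}=5$, exactly one of the two is odd, so this situation cannot be avoided in the theorem as stated. The conclusion $\Phi=0$ still holds, but for two separate reasons that your ``parity bookkeeping'' must distinguish: (i) every term whose $\wedge(F^{\perp,\star})$-component has odd degree --- in particular everything carrying a single $c(dx_n)=c(h_q)$ or a single $c(h_s)$ --- is killed pointwise by ${\rm tr}_{\wedge(F^{\perp,\star})}$; (ii) the residual terms with nonzero pointwise trace, such as the cross terms $a_jb_u\,{\rm tr}[c(f_j)\cdots c(h_u)]$ arising from $c(\xi')p_0c(\xi')$ after writing $c(\xi')=\sum_ja_jc(f_j)+\sum_ub_uc(h_u)$, are odd in the $F$-components of $\xi'$ and are annihilated only by the cosphere integration $\int_{|\xi'|=1}\sigma(\xi')$, not pointwise. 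Once the five cases are checked against these two mechanisms (rather than against a blanket odd-degree argument), the proof closes and agrees with the paper's.
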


\section{The Lower Volume for the Robertson-Walker Space}
One very important family of cosmological models in general relativity is the family of Robertson-Walker space-times:
\begin{equation}
L_{1}^{4}(f,c):=(I\times M, g_{f}^{c}), \ g_{f}^{c}=-\texttt{d}t^{2}+f^{2}(t)g_{c},
\end{equation}
with a warped product Lorentzian metric $g_{f}^{c}$ defined on the product of an open interval $I$ and a Riemannian 3-manifold
$(S,g_{c})$ of constant sectional curvature $c$.

Let $\tilde{M}=I\times _{f}M$ be a Riemannian manifold with the metric $g_{f}=\texttt{d}t^{2}+f^{2}(t)g^{TM}$. Now we compute the
lower dimension volumes for 4-dimensional foliations $\tilde{M}$ with spin leave $I$.
Let $\mathcal{L}(I)$ and $\mathcal{L}(M)$ be the set of lifts of vector fields on $I$ and $M$ to $I\times_{f}M$ respectively.
For $q\in M$, the horizontal leaf $\eta^{-1}(M)$ is a totally geodesic submanifold isometric to $I$ with scalar factor $1/h(p)$.
For $p\in I$, $\pi^{-1}(p)$ is a totally umbilical submanifold that is homothetically isomorphic to $M$ with scalar factor $1/f(p)$.
The submanifolds $\pi^{-1}(p)=\{p\}\times F$, $p\in I$ and $\eta^{-1}(q)=I\times\{q\}$, $q\in M$ are called fibers and leaves respectively.
A vector field on $M$ is called vertical if it is always tangent to fibers; and horizontal if it is always orthogonal to fibers.
We use the corresponding terminology for individual tangent vectors as well.

Let $\mathcal{H}$ and $\mathcal{V}$ denote the projections of tangent spaces of $\tilde{M}$ onto the subspaces of horizontal and
vertical vectors, respectively. We use the same letters to denote the horizontal and vertical distributions.
On the  warped product $I\times_{f}M$, denote by $\partial_{t}$ the lift of the standard vector field $d/dt$ on $I$ of $I\times_{f}M$,
so we have $\partial_{t}\in\mathcal{L}(I)$.

For a vector field $V$ on $I\times _{f}M$, we decompose $V$ into a sum
\begin{equation}
V=\varphi_{V}\partial_{t}+\hat{V},
\end{equation}
where $\varphi_{V}=\langle V, \partial_{t}\rangle$ and $\hat{V}$ is the vertical component of $V$ that is orthogonal to $\partial_{t}$.
By Proposition 2.2 and Proposition 2.4 in \cite{DB}, we have

\begin{lem}
Let $\tilde{M}=I\times _{f}M$ be a Riemannian manifold with the metric $g_{f}=\texttt{d}t^{2}+f^{2}(t)g$.
For vector fields $X,Y$ in $\mathcal{L}(M)$, then
\begin{eqnarray}
&&(1) \ \tilde{\nabla}_{\partial_{t}}\partial_{t}=0,\\
&&(2) \ \tilde{\nabla}_{\partial_{t}} X=\tilde{\nabla}_{X}\partial_{t}=(ln f)'X,\\
&&(3) \ \nabla_{X}Y=\nabla_{X}^{M}Y-\frac{g(X,Y)}{f}grad (f).
\end{eqnarray}
\end{lem}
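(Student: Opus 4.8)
The plan is to derive all three identities directly from the Koszul formula for the Levi-Civita connection $\tilde{\nabla}$ of $g_f=\texttt{d}t^{2}+f^{2}(t)g$, exploiting the product structure of $\tilde M=I\times_f M$. The three structural facts I would isolate at the outset are: (i) $\partial_t$ is a globally defined horizontal unit field, so $g_f(\partial_t,\partial_t)=1$; (ii) lifts from the two factors commute, i.e. $[\partial_t,X]=0$ for $X\in\mathcal{L}(M)$, and the bracket of two vertical fields stays vertical; and (iii) for $X\in\mathcal{L}(M)$ one has $X(f)=0$, since $f=f(t)$ depends only on the base coordinate, whereas $\partial_t g_f(X,Y)=\partial_t\big(f^{2}g(X,Y)\big)=2ff'\,g(X,Y)$ because $g(X,Y)$ is $t$-independent. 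I would also record that $\mathrm{grad}(f)=f'\partial_t$, which converts the resulting horizontal term into the form stated in (6.7).

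For (6.5) and (6.6) I would feed $\partial_t$ into the Koszul formula
\[
2g_f(\tilde\nabla_A B,C)=A\,g_f(B,C)+B\,g_f(A,C)-C\,g_f(A,B)+g_f([A,B],C)-g_f([A,C],B)-g_f([B,C],A)
\]
and test against a horizontal field ($\partial_t$) and a vertical field ($Y\in\mathcal{L}(M)$) separately. For $A=B=\partial_t$ every surviving term vanishes by (i)--(iii), giving $\tilde\nabla_{\partial_t}\partial_t=0$. For (6.6), torsion-freeness together with $[\partial_t,X]=0$ immediately yields $\tilde\nabla_{\partial_t}X=\tilde\nabla_X\partial_t$; testing $\tilde\nabla_{\partial_t}X$ against $\partial_t$ gives a vanishing horizontal part, while testing against $Y$ produces $2g_f(\tilde\nabla_{\partial_t}X,Y)=\partial_t g_f(X,Y)=2ff'g(X,Y)=2(\ln f)'\,g_f(X,Y)$, so $\tilde\nabla_{\partial_t}X=(\ln f)'X$.

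For (6.7) I would split $\tilde\nabla_X Y$ into its horizontal and vertical components. Testing against $\partial_t$, only the term $-\partial_t g_f(X,Y)=-2ff'g(X,Y)$ survives, so the horizontal part equals $-ff'g(X,Y)\,\partial_t=-\tfrac{g_f(X,Y)}{f}\,\mathrm{grad}(f)$, which is exactly the correction term displayed in (6.7) (with $g(X,Y)$ read as the ambient inner product $g_f(X,Y)$). Testing against a vertical field $Z$, the overall factor $f^{2}$ pulls out of every inner product and every vertical field annihilates $f$, so the right-hand side of Koszul reduces to $f^{2}$ times the Koszul expression for $g$ on $M$; this identifies the vertical part with the lift of $\nabla_X^M Y$. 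Adding the two components gives the stated formula.

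The routine part is the bookkeeping inside the Koszul formula; the one place that needs care is the consistent separation of horizontal and vertical components and the identification $\mathrm{grad}(f)=f'\partial_t$, together with tracking where the $f^{2}$ weight of the vertical metric produces the $f'$ factors (via $\partial_t$-derivatives) versus where it is inert (via vertical derivatives, which kill $f$). Alternatively, since these are precisely the standard O'Neill warped-product connection formulas, the entire lemma follows at once from Proposition 2.2 and Proposition 2.4 in \cite{DB} applied to the base $I$, fiber $M$, and warping function $f$.
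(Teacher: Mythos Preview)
Your proposal is correct and actually more detailed than the paper's own treatment: the paper does not prove this lemma at all but simply invokes Proposition~2.2 and Proposition~2.4 in \cite{DB}, which is exactly the alternative you mention at the end. Your direct Koszul-formula derivation is the standard underlying argument and matches what those propositions contain, so both routes coincide.
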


\begin{lem}
For vector fields $X,Y, Z$ in $\mathcal{L}(M)$, the curvature tensor $\tilde{R}$ of $\tilde{M}$ satisfies
\begin{eqnarray}
&&(1) \ \tilde{R}(\partial_{t},X)\partial_{t}=\frac{f''}{f}X,\\
&&(2) \  \tilde{R}(X,\partial_{t})Y=\langle X, Y\rangle\frac{f''}{f}\partial_{t},\\
&&(3) \ \tilde{R}(X,Y)\partial_{t}=0,\\
&&(4) \ \tilde{R}(X,Y)Z=R^{M}(X,Y)Z-\frac{\langle grad (f), grad (f) \rangle}{f^{2}}\{\langle X, Z\rangle Y-\langle Y, Z\rangle X\}.
\end{eqnarray}
\end{lem}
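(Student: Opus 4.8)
The plan is to obtain all four identities by direct computation from the definition
$$\tilde{R}(A,B)C=\tilde{\nabla}_A\tilde{\nabla}_B C-\tilde{\nabla}_B\tilde{\nabla}_A C-\tilde{\nabla}_{[A,B]}C,$$
substituting the Levi-Civita connection formulas of the preceding Lemma. Two structural remarks do most of the work. First, $\partial_t$ is the lift of $d/dt$ and $X,Y,Z$ are lifts of vector fields on $M$, so $[\partial_t,X]=0$ and every Lie-bracket term containing $\partial_t$ disappears, while $[X,Y]$ is again a vertical lift. Second, $(\ln f)'=f'/f$ and $f'$ are functions of $t$ alone, so $X((\ln f)')=0$ for vertical $X$, $\partial_t((\ln f)')=(\ln f)''$, and $grad(f)=f'\partial_t$; moreover the elementary identity $(\ln f)''+((\ln f)')^2=f''/f$ will convert every scalar coefficient into the desired $f''/f$.

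Identities (1) and (3) are immediate. For (1), $\tilde{R}(\partial_t,X)\partial_t=\tilde{\nabla}_{\partial_t}\big((\ln f)'X\big)=\big[(\ln f)''+((\ln f)')^2\big]X=\frac{f''}{f}X$, using $\tilde{\nabla}_X\partial_t=(\ln f)'X$, $\tilde{\nabla}_{\partial_t}\partial_t=0$ and $[\partial_t,X]=0$. For (3), applying $\tilde{\nabla}_{\cdot}\partial_t=(\ln f)'(\cdot)$ to each term collapses $\tilde{R}(X,Y)\partial_t$ to $(\ln f)'\big(\tilde{\nabla}_XY-\tilde{\nabla}_YX-[X,Y]\big)$, which vanishes by torsion-freeness of $\tilde{\nabla}$. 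Identity (2) is similar but needs one extra observation: after discarding the bracket term one differentiates $\tilde{\nabla}_XY=\nabla^M_XY-\frac{\langle X,Y\rangle}{f}grad(f)$ along $\partial_t$, and because the warped inner product $\langle X,Y\rangle=f^2 g^{TM}(X,Y)$ on vertical vectors is itself $t$-dependent, one must use $\partial_t\langle X,Y\rangle=2(\ln f)'\langle X,Y\rangle$. Collecting the $\partial_t$-components then gives $\tilde{R}(X,\partial_t)Y=\langle X,Y\rangle\big[(\ln f)''+((\ln f)')^2\big]\partial_t=\langle X,Y\rangle\frac{f''}{f}\partial_t$.

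The main labor is identity (4). Here I would expand $\tilde{R}(X,Y)Z$ in full, applying the vertical--vertical connection formula of the preceding Lemma twice inside each iterated derivative: once for the outer $\tilde{\nabla}_X$ acting on the vertical field $\nabla^M_YZ$, producing $\nabla^M_X\nabla^M_YZ$ together with a $grad(f)$ term, and once to evaluate $\tilde{\nabla}_X\big(\frac{\langle Y,Z\rangle}{f}grad(f)\big)$, where $\tilde{\nabla}_X grad(f)=f'\tilde{\nabla}_X\partial_t=\frac{(f')^2}{f}X$ by connection formula (2). Antisymmetrizing in $X,Y$ and subtracting $\tilde{\nabla}_{[X,Y]}Z$, the purely horizontal contributions assemble into the intrinsic curvature $R^M(X,Y)Z$, while the terms carrying $grad(f)$ combine into $-\frac{\langle grad(f),grad(f)\rangle}{f^2}\{\langle X,Z\rangle Y-\langle Y,Z\rangle X\}$, using $\langle grad(f),grad(f)\rangle=(f')^2$. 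The principal obstacle is entirely bookkeeping: one must keep careful track of which inner product is the fiber metric $g^{TM}$ and which is the warped metric $\langle\cdot,\cdot\rangle=g_f$ (differing by the factor $f^2$ on vertical vectors, with $\langle\cdot,\cdot\rangle$ itself varying in $t$), and verify that the mixed cross terms cancel after antisymmetrization so that only the standard O'Neill correction term survives. These are precisely the warped-product curvature formulas, and beyond this bookkeeping the argument is a routine application of the connection identities already in hand.
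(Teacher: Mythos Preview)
Your proposal is correct. The paper does not actually prove this lemma: it merely cites Proposition~2.4 of \cite{DB} (Dobarro--\"Unal) and states the result. Your direct computation from the curvature definition and the connection formulas of the preceding lemma is the standard derivation of the O'Neill warped-product curvature identities, and the bookkeeping you flag (tracking the $f^2$ factor between $g^{TM}$ and $g_f$ on vertical vectors, and using $(\ln f)''+((\ln f)')^2=f''/f$) is exactly what is needed. So your approach supplies a self-contained proof where the paper only gives a reference; both arrive at the same formulas, but yours is independent of the external source.
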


Let $M$ be a foliation with boundary $\partial M$. Let $\psi\in \Gamma(S(F)\otimes\wedge(F^{\perp,\star}))$,
 We impose the Dirichlet boundary conditions $\psi|_{\partial M}=0$. With the Dirichlet boundary conditions in \cite{BG}, we have the
 heat trace
asymptotics for $t\rightarrow 0$
$$\texttt{tr}(e^{-tD_F^2})\sim \sum_{n\geq0}t^{\frac{n-m}{2}}a_{n}(D_F^2).$$
When ${\rm dim} \tilde{M}=4$, by (18) in \cite{ILV},
one uses the Seely-deWitt
coefficients $a_{n}(D_F^2)$ and $t=\wedge^{-2}$ to obtain an
asymptotics for the spectral action
\begin{eqnarray}
I&=&\texttt{tr}\widehat{F}\left(\frac{D^2_F}{\wedge^2}\right)\sim\wedge^4F_4a_0(D^2_F)+\wedge^3F_3a_1(D^2_F)\nonumber\\
 &&+\wedge^2F_2a_2(D^2_F)+\wedge F_1a_3(D^2_F)+\wedge^0F_0a_4(D^2_F)~~{\rm as} ~~\wedge\rightarrow \infty,
\end{eqnarray}
where
\begin{eqnarray}
F_k:=\frac{1}{\Gamma(\frac{k}{2})}\int_0^{\infty}\widehat{F}(s)s^{\frac{k}{2}-1}\texttt{d}s.
\end{eqnarray}
Let $N=e_m$ be the inward pointing unit normal vector on $\partial
\tilde{M}$ and $e_i, 1\leq i\leq m-1$ be the orthonormal frame on
$T(\partial M)$. Let $ L_{ab}=(\nabla_{e_a}e_b,N)$ be the second
fundamental form and indices $\{a,b,\cdots\}$ range from $1$ through
$m-1$. By Theorem 1.1 in \cite{BG}, we obtain the first five coefficients of
the heat trace asymptotics
\begin{eqnarray}
a_0(D_F^{2})&=&(4\pi)^{-\frac{m}{2}}\int_{\tilde{M}}\texttt{tr}(\texttt{Id})\texttt{d}vol_{\tilde{M}},\\
a_1(D_F^{2})&=&-4^{-1}(4\pi)^{-\frac{(m-1)}{2}}\int_{\partial {\tilde{M}}}\texttt{tr}(\texttt{Id})\texttt{d}vol_{\partial {\tilde{M}}},\\
a_2(D_F^{2})&=&(4\pi)^{-\frac{m}{2}}6^{-1}\Big\{\int_{\tilde{M}}\texttt{tr}(r_{\tilde{M}}+6E)\texttt{d}vol_{\tilde{M}}+2\int_{\partial \tilde{M}}
{\rm tr}(L_{aa})\texttt{d}vol_{\partial {\tilde{M}}}\Big\},\\
a_3(D_F^{2})&=&-4^{-1}(4\pi)^{-\frac{(m-1)}{2}}96^{-1}\{\int_{\partial {\tilde{M}}}\texttt{tr}(96E+16r_{\tilde{M}}+8R_{aNaN}+7L_{aa}L_{bb}    \nonumber\\
&&-10L_{ab}L_{ab})\texttt{d}vol_{\partial \tilde{M}})\},\\
a_4(D_F^{2})&=&\frac{(4\pi)^{-\frac{m}{2}}}{360}\{\int_{\tilde{M}}\texttt{tr}[-12R_{ijij,kk}+5R_{ijij}R_{klkl}
   -2R_{ijik}R_{ljlk}+2R_{ijkl}R_{ijkl}\nonumber\\
&&-60R_{ijij}E+180E^2+60E_{,kk}+30\Omega_{ij}
     \Omega_{ij}]\texttt{d}vol_{\tilde{M}} \nonumber\\
&&+\int_{\partial {\tilde{M}}}\texttt{tr}(-120E;_N-18r_{\tilde{M}};_N+120EL_{aa}+20r_{\tilde{M}}L_{aa}+4R_{aNaN}L_{bb} \nonumber\\
&&-12R_{aNbN}L_{ab}+4R_{abcb}L_{ac}+24L_{aa;bb}+40/21L_{aa}L_{bb}L_{cc}\nonumber\\
&&-88/7L_{ab}L_{ab}L_{cc}+320/21L_{ab}L_{bc}L_{ac})\texttt{d}vol_{\partial {\tilde{M}}}\}.
\end{eqnarray}
By (2.5), (2.19) and the divergence theorem for manifolds with boundary, we obtain
\begin{eqnarray}
a_0(D_F)&=&\frac{1}{2^p\pi^{p+\frac{q}{2}}}\int_{\tilde{M}}\texttt{d}vol_{\tilde{M}},\\
a_1(D_F)&=&-4^{-1}(4\pi)^{-\frac{(m-1)}{2}}2^{p+q}\int_{\partial \tilde{M}}\texttt{d}vol_{\partial \tilde{M}},\\
a_2(D_F)&=&\frac{1}{12\cdot 2^p\pi^{p+\frac{q}{2}}}(-\int_{\tilde{M}} r_{\tilde{M}} dvol_{\tilde{M}}+4\int_{\partial
\tilde{M}}L_{aa}\texttt{d}vol_{\partial \tilde{M}}),\\
a_3(D_F)&=&-4^{-1}(4\pi)^{-\frac{(m-1)}{2}}96^{-1}2^{p+q}\Big\{\int_{\partial \tilde{M}}(-8r_{\tilde{M}}+8R_{aNaN}+7L_{aa}L_{bb}\nonumber\\
&&-10L_{ab}L_{ab})\texttt{d}vol_{\partial \tilde{M}}\Big\},\\
a_4(D_F)&=&\frac{(4\pi)^{-\frac{m}{2}}}{360}2^{p+q}\Big\{\int_{\tilde{M}}\left(\frac{5}{4}r_{\tilde{M}}^2-2R_{ijik}R_{ljlk}-\frac{7}{4}
    R_{ijkl}^2+\frac{15}{2}||R^{F^\bot}||^2\right)\texttt{d}vol_{\tilde{M}}\nonumber\\
&&+\int_{\partial \tilde{M}}{\rm tr}(-51r_{{\tilde{M}};N}-10r_{\tilde{M}}L_{aa}+4R_{aNaN}L_{bb}-12R_{aNbN}L_{ab}\nonumber\\
&&+4R_{abcb}L_{ac}+24L_{aa;bb}+40/21L_{aa}L_{bb}L_{cc}-88/7L_{ab}L_{ab}L_{cc}\nonumber\\
&&+320/21L_{ab}L_{bc}L_{ac})\texttt{d}vol_{\partial \tilde{M}}\Big\}.
\end{eqnarray}

Consider $\tilde{M}=I\times _{f}M$ be a Riemannian manifold with the metric $g_{f}=\texttt{d}t^{2}+f^{2}(t)g^{TM}$. As in \cite{Wa3},
we take normal coordinates in boundary and we get orthonormal frame $\{\partial_{t}, e_1,e_2,e_{3}\}$. In the sequel we let
$L_{aa}=\langle \nabla_{e_{a}}e_{a}, \partial_{t}\rangle$ be the second fundamental form and
$\tilde{R}_{ijik}=\langle \tilde{R}(e_{i},e_{j})e_{k}, e_{l}\rangle$ be the components of the curvature tensor in local coordinates in \cite{RP}.
Then we obtain
\begin{equation}
L_{aa}=\langle \nabla_{e_{a}}e_{a}, \partial_{t}\rangle=-\delta_{a}^{a}(lnf)'=-3(lnf)',
\end{equation}
and
\begin{equation}
\tilde{R}_{aNaN}=\langle \tilde{R}(e_{a},\partial_{t})e_{a}, \partial_{t}\rangle=3\frac{f''}{f}.
\end{equation}
Similarly we obtain
\begin{eqnarray}
&&L_{bb}=-3(lnf)'; \  L_{aa}L_{bb}=9(\frac{f'}{f})^{2}; \  L_{ab}L_{ab}=3(\frac{f'}{f})^{2}; \nonumber\\
&&L_{aa}L_{bb}L_{cc}=-27(\frac{f'}{f})^{3}; \ L_{ab}L_{ab}L_{cc}=-9(\frac{f'}{f})^{3}; \ L_{ab}L_{bc}L_{ac}=-3(\frac{f'}{f})^{3}; \nonumber\\
&&\tilde{R}_{ijik}\tilde{R}_{ljlk}=\tilde{R}_{ijik}^{M}\tilde{R}_{ljlk}^{M}+12(\frac{f''}{f})^{2}; \nonumber\\
&&(\tilde{R}_{ijkl})^{2}=(\tilde{R}^{M}_{ijkl})^{2}+12(\frac{f''}{f})^{2}; \ \tilde{R}_{aNaN}L_{ab}=-3\frac{f'f''}{f^{2}} ; \nonumber\\
&&\tilde{R}_{aNaN}L_{bb}=-9\frac{f'f''}{f^{2}} ;  \ \tilde{R}_{abcb}L_{ac}=3\frac{f'}{f}r_M-18\frac{f'f''}{f^{2}};\nonumber\\
&&r_{\tilde{M}}=\frac{r_M}{f^{2}}+6(\frac{f''}{f}+\frac{(f')^{2}}{f^{2}}); \nonumber\\
&&||R^{F^\bot}||^2=\sum_{s,t,r,l=1}^q(R^{M}_{rlts})^{2}.
\end{eqnarray}
Then we obtain

\begin{thm}
Let $\tilde{M}=I\times _{f}M$  be  a compact 4-dimensional oriented foliation with spin leave, then the spectral action for  sub-Dirac operators
\begin{eqnarray}
a_0(D_F)&=&\frac{1}{2^p\pi^{p+\frac{q}{2}}}\int_{\tilde{M}}\texttt{d}vol_{\tilde{M}},\\
a_1(D_F)&=&-4^{-1}(4\pi)^{-\frac{(m-1)}{2}}2^{p+q}\int_{\partial \tilde{M}}\texttt{d}vol_{\partial \tilde{M}},\\
a_2(D_F)&=&\frac{1}{12\cdot 2^p\pi^{p+\frac{q}{2}}}\Big[-\int_{\tilde{M}}\Big(\frac{r_M}{f^{2}}+6(\frac{f''}{f}+\frac{(f')^{2}}{f^{2}})\Big)dvol_{\tilde{M}}-12\int_{\partial
\tilde{M}}(lnf)'\texttt{d}vol_{\partial \tilde{M}}\Big],\\
a_3(D_F)&=&-\frac{1}{384}(4\pi)^{-\frac{(m-1)}{2}}2^{p+q}
\int_{\partial \tilde{M}}\Big(-8\frac{r_M}{f^{2}}-24\frac{f''}{f}-15(\frac{f'}{f})^{2}\Big)\texttt{d}vol_{\partial \tilde{M}},\\
a_4(D_F)&=&\frac{(4\pi)^{-\frac{m}{2}}}{360}2^{p+q}\Big\{\int_{\tilde{M}}\Big[\frac{5}{4}
\Big(\frac{r_M}{f^{2}}+6(\frac{f''}{f}+\frac{(f')^{2}}{f^{2}})\Big)^{2}
-2R^{M}_{ijik}R^{M}_{ljlk}+\frac{23}{4}(R^{M}_{ijkl})^{2}-45(\frac{f''}{f})^{2}\Big]\texttt{d}vol_{\tilde{M}}\nonumber\\
&&+\int_{\partial \tilde{M}}{\rm tr}\Big((\frac{102}{f^{3}}+\frac{30}{f^{2}}+\frac{12f'}{f})r_M-306\frac{ff'''}{f^{2}}-378\frac{f'f''}{f^{2}}
+180(\frac{f'}{f})^{2}  \nonumber\\
&&+180\frac{f''}{f}+628(\frac{f'}{f})^{3}\Big)\texttt{d}vol_{\partial \tilde{M}}\Big\}.
\end{eqnarray}
\end{thm}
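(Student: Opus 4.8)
\demo
The plan is to obtain each coefficient $a_j(D_F)$, $0\le j\le 4$, by specializing the universal heat-trace invariants of a Laplace-type operator under Dirichlet boundary conditions to the warped product $\tilde M=I\times_f M$. Theorem 2.2 puts $D_F^2$ in Lichnerowicz form $D_F^2=\triangle^F+E$ with $-E=\tfrac{r_{\tilde M}}{4}+W$, $W$ being the curvature-of-$F^\bot$ Clifford terms, so $D_F^2$ is precisely of the type governed by the Branson--Gilkey coefficients recalled above. The proof then splits into three steps: (i) evaluate the fibrewise traces $\texttt{tr}(\texttt{Id})$, $\texttt{tr}(E)$, $\texttt{tr}(E^2)$, $\texttt{tr}(\Omega_{ij}\Omega_{ij})$ and the boundary contractions of $E$, of $r_{\tilde M}$ and of the $L_{ab}$ appearing in those formulas; (ii) use the divergence theorem on a manifold with boundary to turn the total-divergence bulk terms $R_{ijij,kk}$, $E_{,kk}$, $L_{aa;bb}$ into normal-derivative boundary terms and gather them with the genuine boundary terms, thereby arriving at the reduced formulas for $a_j(D_F)$ recorded just above; (iii) substitute the warped-product identities of Lemmas 6.1 and 6.2, together with the contracted curvature and extrinsic-curvature identities listed immediately before the theorem.

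For step (i) I would reuse the Clifford-trace computations of Section 2 verbatim: odd-degree Clifford monomials have vanishing trace, so $\texttt{tr}(\texttt{Id})=\dim[S(F)\otimes\wedge(F^{\perp,\star})]=2^{p+q}$, $\texttt{tr}(E)=-2^{p+q}r_{\tilde M}/4$, and, exactly as in Section 2,
\[
\texttt{tr}(E^2)=\tfrac{2^{p+q}}{16}\bigl(r_{\tilde M}^2+||R^{F^\bot}||^2\bigr),\qquad \texttt{tr}(\Omega_{ij}\Omega_{ij})=-\tfrac{2^{p+q}}{8}\bigl(R_{ijkl}^2+||R^{F^\bot}||^2\bigr),
\]
while $\texttt{tr}(E_{;N})=-2^{p+q}\partial_t r_{\tilde M}/4$. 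Inserting these, and the divergence-theorem images of $-12R_{ijij,kk}$, $60E_{,kk}$ and $24L_{aa;bb}$, into the Branson--Gilkey formulas yields the reduced expressions for $a_0(D_F),\dots,a_4(D_F)$; in particular the coefficient $-51$ of $r_{\tilde M;N}$ and the coefficient $-10$ of $r_{\tilde M}L_{aa}$ in the boundary part of $a_4$ arise from combining $-18r_{\tilde M;N}$, the images of $-12R_{ijij,kk}$ and $60E_{,kk}$, $-120E_{;N}$, and $120EL_{aa}+20r_{\tilde M}L_{aa}$ after taking traces. For step (iii) I would then substitute $L_{aa}=-3(\ln f)'$, $\tilde R_{aNaN}=3f''/f$, $r_{\tilde M}=r_M/f^2+6(f''/f+(f')^2/f^2)$, $||R^{F^\bot}||^2=\sum_{r,l,t,s}(R^M_{rlts})^2$, the products of the $L_{ab}$ and of the curvature collected before the theorem, and $r_{\tilde M;N}=\partial_t r_{\tilde M}$ (which produces the $f'''$ and $f'f''/f^{2}$ terms), into these reduced expressions; collecting terms gives the stated formulas. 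For instance the bulk part of $a_4$ is immediate: using $R_{ijik}R_{ljlk}=R^M_{ijik}R^M_{ljlk}+12(f''/f)^2$, $(R_{ijkl})^2=(R^M_{ijkl})^2+12(f''/f)^2$ and $||R^{F^\bot}||^2=(R^M_{ijkl})^2$, the coefficient of $(R^M_{ijkl})^2$ becomes $-\tfrac74+\tfrac{15}{2}=\tfrac{23}{4}$ and that of $(f''/f)^2$ becomes $-24-21=-45$.

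I expect the only genuine difficulty to be the bookkeeping in the $a_4$ boundary integral: one must differentiate $r_{\tilde M}$ along the normal $\partial_t$ carefully, keeping the $f'''$ contribution and watching the cancellations; combine the several sources of the coefficient of $r_{\tilde M;N}$; and track the three cubic extrinsic-curvature invariants $L_{aa}L_{bb}L_{cc}$, $L_{ab}L_{ab}L_{cc}$, $L_{ab}L_{bc}L_{ac}$ with their exact rational weights $40/21$, $-88/7$, $320/21$, using $L_{aa}L_{bb}L_{cc}=-27(f'/f)^3$, $L_{ab}L_{ab}L_{cc}=-9(f'/f)^3$, $L_{ab}L_{bc}L_{ac}=-3(f'/f)^3$, as well as $\tilde R_{abcb}L_{ac}=3\tfrac{f'}{f}r_M-18\tfrac{f'f''}{f^{2}}$, which is what injects $r_M$ into the boundary term. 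A secondary point is the normalization flagged in Remark 2.10: since the spin leaf $I$ is odd-dimensional here, one must use the value of $\dim[S(F)\otimes\wedge(F^{\perp,\star})]$ appropriate to odd $\dim F$ consistently, which only affects the overall constant $2^{p+q}$ and not the structure of the formulas. \hfill$\square$
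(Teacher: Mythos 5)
Your proposal follows essentially the same route as the paper: the Branson--Gilkey Dirichlet heat-trace coefficients, the Clifford-trace evaluations of $\texttt{tr}(\texttt{Id})$, $\texttt{tr}(E)$, $\texttt{tr}(E^2)$, $\texttt{tr}(\Omega_{ij}\Omega_{ij})$ carried over from Section 2, the divergence theorem to absorb the total-divergence bulk terms, and finally substitution of the warped-product identities of Lemmas 6.1 and 6.2 and the contractions (6.22)--(6.24). The sample checks you give (e.g.\ the coefficients $\tfrac{23}{4}$ and $-45$ in the bulk part of $a_4$) match the paper's computation, so the argument is essentially the paper's own proof.
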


Nextly, Consider $\bar{M}=S^{1}\times _{f}M^{n}$ be a Riemannian manifold with the metric $g_{f}=\texttt{d}t^{2}+f^{2}(t)g^{TM}$.
As in \cite{Wa3}, we take normal coordinates in boundary and we get orthonormal frame $\{\partial_{t}, e_1,e_2,e_{3}\}$.
By (2.34), we get $||R^{F^\bot}||^2=\sum_{s,t,r,l=1}^q(R^{M}_{rlts})^{2}$.
Then by (6.24) and Theorem 2.9, we obtain

\begin{thm}
Let $\bar{M}=S^{1}\times _{f}M^{n}$  be  a Robertson-Walker space, then
\begin{eqnarray}
 Vol_{(n+1,1)}^{(n-3)}(\bar{M},F)&=& \frac{v_{n+1,n-3}}{360\cdot2^p\pi^{p+\frac{q}{2}}}\int_{\tilde{M}}\Big(\frac{5}{4}
\Big(\frac{r_M}{f^{2}}+6(\frac{f''}{f}+\frac{(f')^{2}}{f^{2}})\Big)^{2}-2R^{M}_{ijik}R^{M}_{ljlk}\nonumber\\
&&+\frac{23}{4}(R^{M}_{ijkl})^{2}-45(\frac{f''}{f})^{2}\Big)\texttt{d}vol_{\tilde{M}};  \\
 Vol^{(n-1)}_{(n+1,1)}(M,F)&=& -\frac{v_{n+1,n-1}}{12\cdot2^p\pi^{p+\frac{q}{2}}}\int_{\tilde{M}}
\Big(\frac{r_M}{f^{2}}+6(\frac{f''}{f}+\frac{(f')^{2}}{f^{2}})\Big)\texttt{d}vol_{\tilde{M}};  \\
 Vol^{(n+1)}_{(n+1,1)}(M,F)&=& \frac{v_{n+1,n+1}}{2^p\pi^{p+\frac{q}{2}}}\int_{\tilde{M}}f^{3}\texttt{d}vol_{\tilde{M}} .
\end{eqnarray}
when $k$ is even and $n$ is even, $v_{n,k}=\frac{k}{n}(2\pi)^{\frac{k-n}{2}}\frac{\Gamma(\frac{n}{2}+1)^{\frac{k}{n}}}{\Gamma(\frac{k}{2}+1)};$
when $k$ is odd and $n$ is odd, $v_{n,k}=\frac{k}{n}2^{\frac{(k-n)(n+1)}{2n}}\times\pi^{\frac{k-n}{2}}\frac{\Gamma(\frac{n}{2}+1)^{\frac{k}{n}}}
{\Gamma(\frac{k}{2}+1)}.$
\end{thm}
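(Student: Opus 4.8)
The plan is to obtain the theorem as a corollary of Theorem 2.9 (in the odd-leaf form promised in Remark 2.10) applied to the closed foliation $\bar M=S^1\times_f M^n$, combined with the warped-product formulas of Lemmas 6.1--6.2. First I would set up the geometry: the leaves of $\bar M$ are the circles $S^1\times\{q\}$, $q\in M^n$, each of which is $1$-dimensional, oriented and spin, so $\dim F=1$, while $F^\perp$ is the pullback of $TM^n$, of codimension $q=n$; in particular $\bar M$ is closed, $\dim\bar M=n+1$, and $\dim[S(F)\otimes\wedge(F^{\perp,\star})]=2^{n}$. By Definition 2.8 and Proposition 2.7, $Vol_{(n+1,1)}^{(n-3)}(\bar M,F)$, $Vol_{(n+1,1)}^{(n-1)}(\bar M,F)$ and $Vol_{(n+1,1)}^{(n+1)}(\bar M,F)$ equal $v_{n+1,n-3}$, $v_{n+1,n-1}$, $v_{n+1,n+1}$ times $Wres(D_F^{-(n-3)})$, $Wres(D_F^{-(n-1)})$, $Wres(D_F^{-(n+1)})$ respectively (the exponents $n-3,n-1,n+1$ each differ from $\dim\bar M=n+1$ by an even integer, so all three are of the nonvanishing type, and the constants $v_{n+1,\cdot}$ are read off from Proposition 2.7 with dimension $n+1$). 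Hence the first step is pure bookkeeping: transcribe the three displayed formulas of Theorem 2.9 with $n$ replaced by $n+1$ and $q$ by $n$, giving these volumes as the stated normalization times $\int_{\bar M}$ of, respectively, $\tfrac54 r_{\bar M}^2-2R_{ijik}R_{ljlk}-\tfrac74 R_{ijkl}^2+\tfrac{15}{2}\|R^{F^\perp}\|^2$, of $-\tfrac1{12}r_{\bar M}$, and of $1$, each against $dvol_{g_f}$.

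The second step is to evaluate these four curvature invariants of $g_f=dt^2+f^2g^{TM}$ in terms of $(M^n,g^{TM})$ and $f$, via Lemma 6.2: the off-diagonal mixed components $\tilde R(\partial_t,e_a)e_b$ ($a\neq b$) vanish, the radial components give $\tilde R(\partial_t,e_a)\partial_t=\tfrac{f''}{f}e_a$, and the purely horizontal block is $R^M$ with its sectional curvature shifted by the term in Lemma 6.2(4). Contracting carefully --- and, as in (6.24), specializing $\dim M=3$, which is what fixes the numerical coefficients --- reproduces the identities already recorded in the paper: $r_{\bar M}=\frac{r_M}{f^2}+6\big(\frac{f''}{f}+\frac{(f')^2}{f^2}\big)$, $R_{ijik}R_{ljlk}=R^M_{ijik}R^M_{ljlk}+12\big(\frac{f''}{f}\big)^2$, $R_{ijkl}R_{ijkl}=R^M_{ijkl}R^M_{ijkl}+12\big(\frac{f''}{f}\big)^2$, and, by the definition (2.34) of $\|R^{F^\perp}\|^2$, $\|R^{F^\perp}\|^2=\sum_{s,t,r,l}(R^M_{rlts})^2=(R^M_{ijkl})^2$ since here $F^\perp$ is the entire spatial tangent bundle.

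The third step is the substitution. In the first formula the $\big(\frac{f''}{f}\big)^2$-terms combine as $-2\cdot12-\tfrac74\cdot12=-45$, and, since $\|R^{F^\perp}\|^2=(R^M_{ijkl})^2$, the Riemann-square terms combine as $-\tfrac74+\tfrac{15}{2}=\tfrac{23}{4}$; together with the expression for $r_{\bar M}$ this is exactly the stated integrand. The second and third formulas are immediate from the same substitutions (for the third, $dvol_{g_f}=f^{n}\,dt\wedge dvol_{M}$, written in the statement with $n=3$ as $f^{3}\,dvol$). I expect the only genuinely nontrivial step to be the warped-product curvature bookkeeping of Step 2 --- getting every contraction and numerical coefficient right, in particular that only the diagonal radial curvature feeds $R_{ijik}R_{ljlk}$ and $R_{ijkl}^2$ and that the horizontal block contributes nothing new once the sectional-curvature shift is accounted for; the two secondary points to make explicit are the odd-leaf analogue of Theorem 2.9 (Remark 2.10) and matching the normalization $2^p\pi^{p+q/2}$ of Section 2 to $\dim[S(F)\otimes\wedge(F^{\perp,\star})]=2^{n}$ in the present case $\dim F=1$, $q=n$.
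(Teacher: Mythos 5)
Your proposal is correct and follows essentially the same route as the paper: the paper likewise obtains the theorem by specializing Theorem 2.9 (with the odd-leaf convention of Remark 2.10, leaf $S^1$, codimension $q=n$) and substituting the warped-product curvature identities of (6.24), derived from Lemmas 6.1--6.2, including the same coefficient bookkeeping $-2\cdot 12-\tfrac{7}{4}\cdot 12=-45$ and $-\tfrac{7}{4}+\tfrac{15}{2}=\tfrac{23}{4}$. Your explicit remarks that the numerical coefficients in (6.24) presuppose $\dim M=3$ and that the normalization $2^{p}\pi^{p+q/2}$ must be matched to $\dim[S(F)\otimes\wedge(F^{\perp,\star})]=2^{n}$ are points the paper leaves implicit, but they do not change the argument.
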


\section*{ Acknowledgements}
This work was supported by Fok Ying Tong Education Foundation under Grant No. 121003 and NSFC. 11271062. The author also thank the referee
for his (or her) careful reading and helpful comments.

\section*{References}

\end{document}